\renewcommand{\le}{\leqslant}
\renewcommand{\ge}{\geqslant}
\numberwithin{equation}{section}
\begin{document}

\newtheorem{thm}{Theorem}[section]
\newtheorem{prop}[thm]{Proposition}
\newtheorem{lem}[thm]{Lemma}
\newtheorem{cor}[thm]{Corollary}
\newtheorem{rem}[thm]{Remark}
\newtheorem*{defn}{Definition}

\newcommand{\DD}{\mathbb{D}}
\newcommand{\NN}{\mathbb{N}}
\newcommand{\ZZ}{\mathbb{Z}}
\newcommand{\QQ}{\mathbb{Q}}
\newcommand{\RR}{\mathbb{R}}
\newcommand{\CC}{\mathbb{C}}
\renewcommand{\SS}{\mathbb{S}}

\renewcommand{\theequation}{\arabic{section}.\arabic{equation}}

\newcommand{\supp}{\mathop{\mathrm{supp}}}    

\newcommand{\re}{\mathop{\mathrm{Re}}}   
\newcommand{\im}{\mathop{\mathrm{Im}}}   
\newcommand{\dist}{\mathop{\mathrm{dist}}}  
\newcommand{\link}{\mathop{\circ\kern-.35em -}}
\newcommand{\spn}{\mathop{\mathrm{span}}}   
\newcommand{\ind}{\mathop{\mathrm{ind}}}   
\newcommand{\rank}{\mathop{\mathrm{rank}}}   
\newcommand{\Fix}{\mathop{\mathrm{Fix}}}   
\newcommand{\codim}{\mathop{\mathrm{codim}}}   
\newcommand{\conv}{\mathop{\mathrm{conv}}}   
\newcommand{\epsi}{\mbox{$\varepsilon$}}
\newcommand{\eps}{\mathchoice{\epsi}{\epsi}
{\mbox{\scriptsize\epsi}}{\mbox{\tiny\epsi}}}
\newcommand{\cl}{\overline}
\newcommand{\pa}{\partial}
\newcommand{\ve}{\varepsilon}
\newcommand{\zi}{\zeta}
\newcommand{\Si}{\Sigma}
\newcommand{\cA}{{\mathcal A}}
\newcommand{\cG}{{\mathcal G}}
\newcommand{\cH}{{\mathcal H}}
\newcommand{\cI}{{\mathcal I}}
\newcommand{\cJ}{{\mathcal J}}
\newcommand{\cK}{{\mathcal K}}
\newcommand{\cL}{{\mathcal L}}
\newcommand{\cN}{{\mathcal N}}
\newcommand{\cR}{{\mathcal R}}
\newcommand{\cS}{{\mathcal S}}
\newcommand{\cT}{{\mathcal T}}
\newcommand{\cU}{{\mathcal U}}
\newcommand{\OM}{\Omega}
\newcommand{\B}{\bullet}
\newcommand{\ol}{\overline}
\newcommand{\ul}{\underline}
\newcommand{\vp}{\varphi}
\newcommand{\AC}{\mathop{\mathrm{AC}}}   
\newcommand{\Lip}{\mathop{\mathrm{Lip}}}   
\newcommand{\es}{\mathop{\mathrm{esssup}}}   
\newcommand{\les}{\mathop{\mathrm{les}}}   
\newcommand{\nid}{\noindent}
\newcommand{\pzr}{\phi^0_R}
\newcommand{\pir}{\phi^\infty_R}
\newcommand{\psr}{\phi^*_R}
\newcommand{\pow}{\frac{N}{N-1}}
\newcommand{\ncl}{\mathop{\mathrm{nc-lim}}}   
\newcommand{\nvl}{\mathop{\mathrm{nv-lim}}}  
\newcommand{\la}{\lambda}
\newcommand{\La}{\Lambda}    
\newcommand{\de}{\delta}    
\newcommand{\fhi}{\varphi} 
\newcommand{\ga}{\gamma}    
\newcommand{\ka}{\kappa}   

\newcommand{\core}{\heartsuit}
\newcommand{\diam}{\mathrm{diam}}

\newcommand{\lan}{\langle}
\newcommand{\ran}{\rangle}
\newcommand{\tr}{\mathop{\mathrm{tr}}}
\newcommand{\diag}{\mathop{\mathrm{diag}}}
\newcommand{\dv}{\mathop{\mathrm{div}}}

\newcommand{\al}{\alpha}
\newcommand{\be}{\beta}
\newcommand{\Om}{\Omega}
\newcommand{\na}{\nabla}

\newcommand{\cC}{\mathcal{C}}
\newcommand{\cM}{\mathcal{M}}
\newcommand{\nr}{\Vert}
\newcommand{\De}{\Delta}
\newcommand{\cX}{\mathcal{X}}
\newcommand{\cP}{\mathcal{P}}
\newcommand{\om}{\omega}
\newcommand{\si}{\sigma}
\newcommand{\te}{\theta}
\newcommand{\Ga}{\Gamma}

\title[Anisotropic radial symmetry]{Radial symmetry of solutions \\
to anisotropic and weighted diffusion equations \\
with discontinuous nonlinearities}

\author{Serena Dipierro}
\address{Serena Dipierro: Department of Mathematics and Statistics, The University of Western Australia, 35 Stirling Highway, Crawley, Perth, WA 6009, Australia}
\email{serena.dipierro@uwa.edu.au}

\author{Giorgio Poggesi}
\address{Giorgio Poggesi: Department of Mathematics and Statistics, The University of Western Australia, 35 Stirling Highway, Crawley, Perth, WA 6009, Australia}
\email{giorgio.poggesi@uwa.edu.au}

\author{Enrico Valdinoci}
\address{Enrico Valdinoci: Department of Mathematics and Statistics, The University of Western Australia, 35 Stirling Highway, Crawley, Perth, WA 6009, Australia}
\email{enrico.valdinoci@uwa.edu.au}

\begin{abstract}
For $1 <p < \infty$, we prove radial symmetry for bounded nonnegative solutions of
\begin{equation*}
	\begin{cases}
		-\dv\left\{ w(x) \, H( \na u )^{p-1}\, \na_{ \xi} H( \na u) \right\}= f(u) \, w(x) \ & \mbox{ in } \ \Si \cap \Om, 
		\\
		u=0 \ & \mbox{ on } \ \Ga_0 ,
		\\
		\langle \na_\xi H(\na u) , \nu \rangle = 0 \ & \mbox{ on } \ \Ga_1 \setminus \left\lbrace 0 \right\rbrace ,
	\end{cases}
\end{equation*}
where $\Om$ is a Wulff ball, $\Si$ is a convex cone with vertex at the center of $\Om$, 
$\Ga_0 := \Si \cap \pa \Om$, $\Ga_1 := \pa \Si \cap \Om $, $H$ is a norm,
$w$ is
a given weight and~$f$ is a possibly discontinuous nonnegative nonlinearity.

Given the anisotropic setting that we deal with,
the term ``radial'' is understood in the Finsler framework, that is, the function~$u$ is radial if there exists a point~$x$ such that $u$ is constant on the Wulff shapes centered at~$x$.

When $\Si = \RR^N$, J. Serra obtained the symmetry result in the isotropic unweighted setting
(i.e., when~$H(\xi)\equiv|\xi|$ and~$w\equiv1$). In this case we provide the extension of his result to the anisotropic setting. This provides a generalization to the anisotropic setting of a celebrated result due to Gidas-Ni-Nirenberg and such a generalization is new even for $p=2$ whenever $N>2$.

When $\Si \subsetneq \RR^N$ the results presented are new even in the isotropic and unweighted setting (i.e., when $H$ is the Euclidean norm and $w \equiv 1$) whenever $2 \neq p \neq N$.
Even for the previously known case of
unweighted isotropic setting
with~$p=2$ and $\Si \subsetneq \RR^N$,
the present paper provides an approach to the problem by exploiting integral (in)equalities which is new for $N>2$: this complements the corresponding symmetry result obtained via the moving planes method by Berestycki-Pacella.

The results obtained in the isotropic and weighted setting (i.e., with $w \not\equiv 1$)
are new for any $p$.
\end{abstract}

\keywords{Symmetry, Convex cones, Weighted anisotropic isoperimetric inequalities}
\subjclass[2020]{35B06}
%
%

\maketitle

\raggedbottom

\section{Introduction}

Since the classical works of
Aleksandrov~\cite{A}, Serrin~\cite{S} and
Gidas--Ni--Nirenberg~\cite{GNN}, an intensively studied topic
in the theory of partial differential equations and the calculus of variations focuses
on the radial symmetry of solutions under suitable assumptions
on the equation under consideration, on the boundary conditions and/or on the domain.

In this paper we consider the Finsler framework of a nonlinear anisotropic equation in divergence form.
The setting taken into account is quite general, since it includes nonlinear operators
of $p$-Laplace type, possibly with weights. The domain considered is obtained
from the intersection of a Wulff ball and a cone~$\Sigma \subseteq \RR^N$ with vertex at its center.
The
results provided are of radial symmetry type, where, given
the possible anisotropy of the ambient space,
the term ``radial'' is understood in the sense that a function~$u$ is radial
if there exists a point~$x$ such that $u$ is constant on the Wulff shapes 
centered at~$x$ (see Section~\ref{sec:preliminaries} for the detailed mathematical setting).

These results
will be obtained
under suitable
homogeneity or concavity assumptions on the weights~$w$ and suitable arithmetic 
relations between the dimension of the ambient space, the homogeneous exponent of the weight and
the homogeneity of the nonlinear operator.
A pivotal step in our analysis consists in proving that the level sets of the solution
are isoperimetric (and additionally the anisotropic norm of the gradient of the solution is constant
along these level sets). This cornerstone result will be stated in detail in Theorem~\ref{thm:MAIN GENERAL}
below and then combined with a series of isoperimetric inequalities to establish 
that the level sets are Wulff shapes: this, together with the
additional information
on the constancy of the anisotropic norm of the gradient of the solution, establishes
the radial symmetry of the solution. Depending on technical conditions on the (an)isotropy
of the ambient space and on the weights, these radial symmetry results
will be detailed in Theorems~\ref{thm:cones},
\ref{thm:MAIN unweighted in Wulff ball}
and~\ref{thm:weighted cones}.

The arguments that we use are inspired by those of J. Serra
in~\cite{Se}, which in turn were inspired by the classical paper of P.-L. Lions~\cite{L} in which the symmetry was obtained in the Euclidean case with $p=N=2$, $\Si = \RR^N$ and~$w \equiv 1$.
We remark that the main results in~\cite{L}, focusing on elliptic semilinear equations
in the plane, can be seen as a counterpart of those in~\cite{GNN}, in the sense that
the results in~\cite{L} weaken the smoothness assumptions on the source term
and on the solution with respect to the setting in~\cite{GNN},
at the expense of restricting to positive nonlinearities and to dimension two only.
Interestingly, the method in~\cite{L}, being based on integral (in)equalities,
is conceptually different from the moving plane technique used in~\cite{GNN}
and, in a sense, it is more related, apart from several important structural differences,
to the approach to radial symmetry inhaugurated by Weinberger in \cite{W}:
see also, e.g., \cite{PSc, Re, GL, FGK, FK, BNST, MP, MP2, MP3, GS, EP, Po, CS, WX, BC, GX, DPV, PT, PT2, CG, CiR, CFR} for related problems and ramifications.

The method of~\cite{L}
has been extended by
Kesavan--Pacella to the
$N$-dimensional Euclidean setting in presence of the $p$-Laplacian operator in the case $p=N$,
see~\cite{KP}.
In turn,
the techniques of~\cite{KP} have triggered the research in the anisotropic case (with $\Si = \RR^N$ and $p=N$),
which was carried out by Belloni--Ferone--Kawohl
in~\cite{BFK}. 
The extension of the method to the case $p \neq N$ is due to J. Serra (\cite{Se}), who obtained it in the isotropic setting (with $\Si = \RR^N$).

In \cite{BP}, by using the moving planes method, Berestycki--Pacella obtained radial symmetry in spherical convex cones (in the isotropic unweighted setting), when $p=2$ and $f$ is Lipschitz.
To the authors' knowledge, symmetry results in spherical convex cones with $\Si \subsetneq \RR^N$ were considered only in the unweighted isotropic setting in \cite{BP} (for $p=2$) and \cite{KP} (for $p=N$).

See also~\cite{DFM} and the references therein for further works on symmetry problems
for nonlinear equations.\bigskip

We point out that, while our method is general enough to work in the anisotropic weighted setting,
the results offered here are new also in some more classical cases (such as those stated in Theorems~\ref{thm:cones}, \ref{thm:MAIN unweighted in Wulff ball}
and~\ref{thm:weighted cones}).
Moreover, thanks to the available characterizations of
the isoperimetric sets, the conclusions obtained in these
cases are stronger and take a less technical form. As an introductory example, one of the results that we obtain here (see Theorem~\ref{thm:MAIN unweighted in Wulff ball})
goes as follows. \bigskip

{\em
	Let $\Om$ be a Wulff ball in $\RR^N$, $N \ge 2$, and let $1 < p < \infty $. Assume that
	$f \in L^\infty_{\mathrm{loc}} ( \left[0, \infty \right) )$ is nonnegative.
	Let $u \in 
	C^1( \ol{ \Om } )$
	be a weak solution of 
	\begin{equation*}
	\begin{cases}
		-\dv\left\{   H( \na u )^{p-1}\, \na_{ \xi} H( \na u) \right\}= f(u) \ & \mbox{ in } \ \Om, 
		\\
				u \ge 0 &  \mbox{ in } \ \Om , \\ 
		u=0 \ & \mbox{ on } \ \pa \Om ,
	\end{cases}
\end{equation*}
	Assume that either 
	\begin{equation*}
		p \ge N ,
	\end{equation*}
	or
	\begin{equation*}
		p < N \quad \text{and, for some nonincreasing function $\phi \ge 0$, we have } \phi \le f \le \frac{N p}{N-p} \phi.
	\end{equation*}
	
	Then, $u$ is a radially symmetric and radially nonincreasing function. 
	Moreover, 
	\begin{equation*}
		u  \text{ is radially strictly decreasing on } \left\lbrace 0 < u < \max_{\ol{\Om}} u \right\rbrace  ,
	\end{equation*}
	and
	\begin{equation*}
	\left\lbrace 0 < u < \max_{ \ol{\Om} } u \right\rbrace \quad \text{is a Wulff annulus or a punctured Wulff ball.}
	\end{equation*}}

\bigskip

Let us now go into the technical framework of this paper, to present the results obtained in their full generality.

Let $\Si$ be an open cone in $\RR^N$ with vertex at the origin, i.e.,
\begin{equation*}
	\Si := \left\lbrace t x \, : \, x \in \om , \, t \in (0, \infty ) \right\rbrace ,
\end{equation*}
for some domain $\om \subseteq \SS^{N-1}$.
%
%
We stress that the possibility $\om = \SS^{N-1}$ (which implies $\Si= \RR^N$) is allowed throughout the paper.
When~$\om \subsetneq \SS^{N-1}$,
we assume $\Sigma$ to be convex and denote by $\nu$ its outward unit normal (which is defined almost everywhere on $\pa \Si$).
%
%

Let $\Om \subset \RR^N$ be a bounded domain, and define
\begin{equation*}
	\Ga_0 := \Si \cap \pa \Om  \quad{\mbox{ and }} \quad 
	\Ga_1 := \pa \Si \cap \Om .
\end{equation*}

Furthermore, we endow $\RR^N$ with a norm $H : \RR^N \to \RR$
such that:
\begin{eqnarray}
&&{\mbox{$H$ is convex;}}\label{Hcond1}\\
&&{\mbox{$H( \xi) \ge 0$ for $\xi \in \RR^N$ and $H(\xi) = 0$ if and only if $\xi = 0$;}}\label{Hcond2}\\
&&{\mbox{$H(t \xi) = |t|H(\xi)$ for $\xi \in \RR^N$ and $t \in \RR$.}}\label{Hcond3}
\end{eqnarray}
We also define
\begin{equation}\label{ball1}
B^H := \left\lbrace \xi \in \RR^N : H( \xi ) < 1 \right\rbrace.
\end{equation}
Throughout the paper, we assume $H \in C^2 ( \RR^N \setminus \left\lbrace 0 \right\rbrace )$ to be uniformly elliptic.
We say that $H \in C^2 ( \RR^N \setminus \left\lbrace 0 \right\rbrace )$ is uniformly elliptic if the ball $B^H$ is uniformly convex, i.e., such that the principal curvatures of the boundary of $B^H$ are bounded away from zero. This is a standard assumption on $H$ in order to obtain some regularity of the solutions, using or adapting the elliptic regularity theory (see, e.g., also \cite{BC, CFV}).

In this setting, we consider solutions of
\begin{equation}\label{eq:GENERALPB weighted anisotropic problem in cones}
	\begin{cases}
		-\dv\left\{ w(x) \, H( \na u )^{p-1}\, \na_{ \xi} H( \na u) \right\}= f(u) \, w(x) \ & \mbox{ in } \ \Si \cap \Om, 
		\\
		u \ge 0 & \mbox{ in } \ \Si \cap \Om, \\ 
		u=0 \ & \mbox{ on } \ \Ga_0 ,
		\\
		\langle \na_\xi H(\na u) , \nu \rangle = 0 \ & \mbox{ on } \ \Ga_1 
		\setminus \left\lbrace 0 \right\rbrace.
	\end{cases}
\end{equation}
In the case $\Si = \RR^N$, we have $\Ga_1 = \varnothing$ and the last condition in \eqref{eq:GENERALPB weighted anisotropic problem in cones} is trivially satisfied. In this case, \eqref{eq:GENERALPB weighted anisotropic problem in cones} simply becomes
\begin{equation}
	\begin{cases}
	\label{eq:weighted anisotropic problem in RN}
	-\dv\left\{ w(x) \, H( \na u )^{p-1}\, \na_{ \xi} H( \na u) \right\}= f(u) \, w(x) \ & \mbox{ in } \ \Om, 
	\\
	u \ge 0 & \mbox{ in } \ \Om, 
	\\
	u=0 \ & \mbox{ on } \ \pa \Om .
	\end{cases}
\end{equation}

In our main results, the assumptions on the weight $w$ are the following\footnote{Using the recent results
in~\cite{I}, assumption~\eqref{eq:assumptions on w for main theorems2} may be weakened by requiring
that, when~$\la>0$, either~$w$ or~$w^{1/ \la }$ is concave,
or, equivalently, that either~$\la\in(0,1)$ and~$w$ is concave or~$\la\in[1,+\infty)$ and~$w^{1/ \la }$ is concave.
This generalization is accomplished by using, when~$w$ is concave, the isoperimetric
inequality in Corollary~0.14 (see also Remark~0.15) of~\cite{I} instead
of the one by~\cite{CRS} recalled here in Theorem~\ref{thm:isoperimetric anisotropic weighted cones}.
Strictly speaking, the setting in~\cite{I} is isotropic, but
the arguments introduced there have the potential to be generalized to obtain analogous results in the anisotropic setting as well.
Counterexamples
to the isoperimetric
inequality are also given in Corollary~0.12 and Remark~0.13 of~\cite{I}.

We also point out that
the constant
in~\cite{I} (which is in terms of the volume ratio) improves
in several cases the previously obtained ones.}: there exists
$\la \ge 0$ such that
\begin{eqnarray}\label{eq:assumptions on w for main theorems}
&&w :  \ol{\Si} \to \left[ 0, +\infty \right) \text{ is a continuous nonnegative function positively homogeneous of degree~$\lambda$,} 
\\
&&w^{1/ \la } \text{ is concave in } \Si \text{ (in the case $\la >0$)},\label{eq:assumptions on w for main theorems2}
\\&&
w \text{ is locally Lipschitz in } \Si . \label{eq:assumptions on w for main theorems3}
\end{eqnarray}
We notice 
that when $\la>0$, the concavity assumption on $w^{1 / \la}$ automatically gives that~$w$
is positive in~$\Si$.

To state our main theorem in this setting, we recall the following notation.
We identify the dual space of $\RR^N$ with $\RR^N$ itself via the scalar product
$\langle \cdot, \cdot \rangle$. Accordingly, given the norm~$H$ satisfying~\eqref{Hcond1}, \eqref{Hcond2}
and~\eqref{Hcond3},
 the space $\RR^N$ turns out to be endowed with the dual norm $H_0$, that is the polar function defined by
\begin{equation}\label{hzero}
	H_0(x) = \sup_{\xi \neq 0 } \frac{\langle x, \xi \rangle }{H( \xi )} \quad \text{for} \quad x \in \RR^N .
\end{equation} 
Notice that $H$ results to be the support function of the unitary Wulff ball 
\begin{equation}\label{ball33}
B^{H_0} := \left\lbrace x \in \RR^N : H_0(x) < 1 \right\rbrace \end{equation}
of $H$ centered at the origin (see \cite{CrM} and \cite[Section 1.7]{Sc}) and, in turn, $H_0$ is the
support function of~$B^H$, defined in~\eqref{ball1}. To ease notation, the unitary Wulff ball (of $H$) $B^{H_0}$
centered at the origin will be denoted simply by $B$.
Similarly, for $r>0$, $B_r$ will denote the Wulff ball (of $H$) of radius $r$ centered at the origin, i.e.,
$$
B_r := B_r^{H_0} := \left\lbrace x \in \RR^N : H_0(x) < r \right\rbrace .
$$

Furthermore, for a set of finite perimeter $E$, we define the weighted anisotropic perimeter of $E$ in $\Si$ as follows
\begin{equation}\label{eq:defweighted perimetro smooth}
P_{w,H} ( E ; \Si ) : = \int_{\Si \cap \pa^* E} H( \nu ) \, w \, d \cH^{N-1},
\end{equation}
being~$\pa^* E$ the reduced boundary of~$E$ and $\nu$ its outer (measure theoretical) unit normal vector. 
Also, for a measurable set $E \subset \Si$, we denote by $w( E )$ the weighted volume of $E$, namely
\begin{equation}\label{WEIVOL}
w( E ) : = \int_E w \, d \cH^{N} .
\end{equation}
In the unweighted case (i.e., when $w \equiv 1$), $w(E)$ agrees with $\cH^N ( E)$.

With this notation, our main theorem is the following:

\begin{thm}\label{thm:MAIN GENERAL}
	Let $\Om$ be a Wulff ball in $\RR^N$ centered at $0$, $N \ge 2$, and let $1 < p < \infty $. Assume that
	$f \in L^\infty_{\mathrm{loc}} ( \left[0, \infty \right) )$ is nonnegative
	and the weight $w$ satisfies \eqref{eq:assumptions on w for main theorems},
	\eqref{eq:assumptions on w for main theorems2}, and \eqref{eq:assumptions on w for main theorems3}.

	Let $u \in C^1 \left( (\Si \cap \Om) \cup  \Ga_0 \cup ( \Ga_1 \setminus \left\lbrace 0 \right\rbrace ) \right) \cap W^{1,\infty}(\Si \cap \Om )$
	be a solution of \eqref{eq:GENERALPB weighted anisotropic problem in cones} in
	the weak sense. 
	 Set 
	 \begin{equation}\label{Ddef}
	 D := N + \la,\end{equation} where $\la$ is that appearing in \eqref{eq:assumptions on w for main theorems}.
	Assume that either 
	\begin{equation}\tag{a}\label{eq:GENERAL condizione a}
		p \ge D ,
	\end{equation}
	or
	\begin{equation}\tag{b}\label{eq:GENERAL condizione b}
		p < D \quad \text{and, for some nonincreasing function $\phi \ge 0$, we have } \phi \le f \le \frac{D p}{D-p} \phi.
	\end{equation}
	Let also
\begin{equation}\label{eq:def M in Main thm}
		M:=\sup_{ \Si \cap \Om } u ,
\end{equation}
		
		Then, for a.e. $t \in (0,M )$	
		the following two conditions are verified:
		
		(i) $\left\lbrace u >t \right\rbrace$ satisfies
			\begin{equation}\label{eq:weighted anisotropic isoperimetric in conesEQ}
		\frac{P_{w,H} \big(\left\lbrace u >t \right\rbrace ; \Si\big)}{w\big( \Si \cap \left\lbrace u >t \right\rbrace
		\big)^{\frac{ D -1}{ D }}} = \frac{P_{w,H} (B ; \Si)}{w( \Si \cap B)^{\frac{ D -1 }{ D }}} ,
	\end{equation}
		(ii) $H( \na u)$ is constant on $\left\lbrace u = t \right\rbrace$.
\end{thm}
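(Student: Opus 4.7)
The plan is to adapt to the present weighted anisotropic setting in convex cones the integral-inequality strategy of Lions~\cite{L}, Kesavan--Pacella~\cite{KP}, Belloni--Ferone--Kawohl~\cite{BFK}, and Serra~\cite{Se}, combining a test-function identity, H\"older's inequality on level sets, and the weighted anisotropic isoperimetric inequality in~$\Si$ from Cabr\'e--Ros-Oton--Serra~\cite{CRS}. Introduce the level-set functionals $V(t) := w(\Si \cap \{u > t\})$, $A(t) := P_{w,H}(\{u>t\};\Si)$, $C(t) := \int_{\{u=t\}} H(\na u)^{p} w/|\na u|\,d\cH^{N-1}$, and $F(t) := \int_{\{u>t\}} f(u)\,w\,dx$. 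By the coarea formula, $-V'(t) = \int_{\{u=t\}} w/|\na u|\,d\cH^{N-1}$ for a.e.\ $t\in (0,M)$.

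First I would establish three pointwise relations. Testing \eqref{eq:GENERALPB weighted anisotropic problem in cones} against $(u-t)_+$ and using Euler's identity $\langle \xi,\na_\xi H(\xi)\rangle = H(\xi)$, together with the Dirichlet condition on $\Ga_0$ and the anisotropic Neumann condition on $\Ga_1\setminus\{0\}$ (which annihilate all boundary integrals), yields $\int_{\{u>t\}} w\,H(\na u)^{p}\,dx = \int_{\{u>t\}} f(u)\,w\,(u-t)\,dx$; differentiating in $t$ gives $C(t)=F(t)$ for a.e.\ $t$. H\"older's inequality on $\{u=t\}$ with respect to the measure $w/|\na u|\,d\cH^{N-1}$ provides $A(t)^{p}\leq C(t)\,(-V'(t))^{p-1}$, with equality iff $H(\na u)$ is constant on $\{u=t\}$. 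The weighted anisotropic isoperimetric inequality in~$\Si$ furnishes $A(t)\geq c_{0}\,V(t)^{(D-1)/D}$, with $c_{0}:=P_{w,H}(B;\Si)/w(\Si\cap B)^{(D-1)/D}$, with equality iff $\{u>t\}\cap\Si$ is (modulo null sets) a Wulff ball centered at $0$ intersected with $\Si$. Chaining these three ingredients yields, for a.e.\ $t\in (0,M)$,
\begin{equation*}
F(t)\,(-V'(t))^{p-1} \;\geq\; c_{0}^{\,p}\,V(t)^{p(D-1)/D}. \qquad (\star)
\end{equation*}

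The heart of the proof is upgrading $(\star)$ to an a.e.\ equality, since this simultaneously forces the H\"older equality (hence (ii)) and the isoperimetric equality (hence (i)). I would rewrite $(\star)$ in the separated form $-V'(t)\,F(t)^{1/(p-1)} \geq c_{0}^{p/(p-1)}\,V(t)^{p(D-1)/(D(p-1))}$, integrate on $(0,M)$, and use $-F'(t)=f(t)(-V'(t))$ to convert the $F$-factor into a $t$-integral. This lower bound is matched by a global upper bound stemming from testing \eqref{eq:GENERALPB weighted anisotropic problem in cones} against $u$ itself, giving $\int_{\Si\cap\Om} w\,H(\na u)^{p}\,dx = \int_{\Si\cap\Om} f(u)\,u\,w\,dx$, combined with a sharp one-dimensional (Bliss/Hardy--Sobolev-type) inequality whose sharp constant is precisely $c_{0}^{p/(p-1)}$. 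Hypothesis \eqref{eq:GENERAL condizione a} $p\geq D$ permits a direct integration, thanks to $V(M^-)=0$ and finiteness of $V(0)$; hypothesis \eqref{eq:GENERAL condizione b} $p < D$ instead relies crucially on the sandwich $\phi \leq f \leq (Dp/(D-p))\phi$ with $\phi$ nonincreasing, where the ratio $Dp/(D-p)$ is exactly the Sobolev critical exponent in dimension~$D$: combined with the monotonicity of $\phi$, it is tailored to close the matching without slack.

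The main obstacle I anticipate is this matching step, especially in case \eqref{eq:GENERAL condizione b}, where the Bliss-type extremal profile must coincide with the ODE for $V$ obtained by saturating $(\star)$; the pinch $\phi \leq f \leq (Dp/(D-p))\phi$ is exactly the delicate ingredient forbidding slack between the two global bounds, and proving rigorously that the sharp one-dimensional constant equals $c_{0}^{p/(p-1)}$ requires a careful variational identification of extremals. Secondary technical concerns are the low regularity of $f$ (only $L^\infty_{\mathrm{loc}}$), handled by an approximation of $f$ by smoother nonlinearities respecting the same sandwich, and possible degeneracies of $\na u$, where the coarea formula and Sard-type arguments must be invoked only on the set of regular values of $u$.
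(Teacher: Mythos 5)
Your scaffolding is correct up to the pointwise inequality $(\star)$: the Gauss--Green identity $I(t)=\int_{\{u=t\}}H(\nabla u)^{p-1}H(\nu)\,w\,d\mathcal H^{N-1}$, the H\"older step on $\{u=t\}$, and the weighted anisotropic isoperimetric inequality in $\Sigma$ are exactly the ingredients the paper combines in its Lemma in Section~5 (``H\"older-isoperimetric-type inequality''). You also correctly identify that the theorem is proved by squeezing this lower bound against a global upper bound, forcing a.e.\ equality in both the H\"older and the isoperimetric steps.

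The genuine gap is in the matching step, which you flag as the main obstacle but then fill with the wrong tool. You propose to get the global upper bound from testing the equation against $u$, yielding $\int w\,H(\nabla u)^p = \int f(u)\,u\,w$, plus an unspecified ``sharp one-dimensional Bliss/Hardy--Sobolev-type inequality.'' That is not what closes the argument. The paper's matching partner is a \emph{Pohozaev-type identity} (Lemma~\ref{lem:Pohozaev General}): morally one tests against $\langle x,\nabla u\rangle$, not against $u$, and one uses the $\lambda$-homogeneity of $w$ so that $\langle x,\nabla w\rangle=\lambda w$ makes $D=N+\lambda$ appear. This produces $D\int F(u)\,w + \tfrac{p-D}{p}\int u\,f(u)\,w = \tfrac1{p'}\int_{\Gamma_0}H(\nabla u)^p\,\langle x,\nu\rangle\,w\,d\mathcal H^{N-1}$, and the boundary term on $\Gamma_1$ vanishes precisely because $\Sigma$ is a cone ($\langle x,\nu\rangle=0$) together with the conormal condition. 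Then the fact that $\Omega$ is a \emph{Wulff ball} is used explicitly: on $\Gamma_0$ one has $\langle x,\nu\rangle = R\,H(\nu)$ from the support-function relation~\eqref{eq:norma H def dual}, so the boundary integral becomes an anisotropic-perimeter-weighted integral on which H\"older (in the other direction) plus the Gauss--Green identity at $t=0$ and the identity $P_{w,H}(B;\Sigma)=D\,w(\Sigma\cap B)$ reproduce exactly $c_0^{p/(p-1)}$. No free-standing one-dimensional inequality is invoked, and there is no need to ``identify extremals'' of an auxiliary Bliss-type problem; the matching of constants is a direct algebraic computation after the Pohozaev identity is available.

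Two secondary issues. First, the paper does not integrate $(\star)$ directly; it introduces $K(t)=I(t)^{\alpha}\mu(t)^{\beta}$ with $\alpha=p'$, $\beta=\tfrac{p-D}{D(p-1)}$, shows $K$ is nonincreasing under~\eqref{eq:GENERAL condizione a} or~\eqref{eq:GENERAL condizione b}, and integrates $-K'$; the specific exponents are chosen so that, after the H\"older-isoperimetric bound and Fubini, the result is literally the left-hand side of the Pohozaev identity. Your separated form of $(\star)$ does not integrate to that quantity in any obvious way for $p\neq 2$. Second, hypotheses (a)/(b) are not there merely to ``permit integration'' or ``close the matching'': in case (b) they are used to prove the nonnegativity of the bracket in $-K'$ (so that you may discard $-\mu'$ in favour of $\int_{\{u=t\}}w/|\nabla u|$), \emph{and} to establish absolute continuity of $K$ via a Lou-type result on the singular set $\{\nabla u=0\}$ (Lemma~\ref{lem:Lou generalization}); both are needed before the squeeze can be run. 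As stated, your proposal leaves the central identity unproved and therefore does not yet yield Theorem~\ref{thm:MAIN GENERAL}.
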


We remark that, in the setting of Theorem~\ref{thm:MAIN GENERAL},
since $u \in W^{1, \infty}(\Si \cap \Om)$
	and $\Si \cap \Om$ is convex, the
	Sobolev embedding theorem (see also \cite[Theorem 4.1]{H}) ensures that~$
	u \in C^{0,1}(\ol{\Si \cap \Om}) \subset C^{0}(\ol{\Si \cap \Om})$,
	and hence an equivalent definition in~\eqref{eq:def M in Main thm} is to set
$$	M:=\max_{\ol{\Si \cap \Om}} u .$$	

We also point out that formula~\eqref{eq:weighted anisotropic isoperimetric in conesEQ} says that
the sets~$\left\lbrace u >t \right\rbrace$ satisfy
the equality in the weighted anisotropic isoperimetric inequality in cones established in~\cite{CRS}. This will be recalled in
details in Theorem~\ref{thm:isoperimetric anisotropic weighted cones}.

\begin{rem}[On the regularity of the solutions]\label{rem:on the regualrity}
	{\rm
Concerning the regularity assumptions on $u$ in Theorem~\ref{thm:MAIN GENERAL}, we point out that the main hypothesis is that $u$ belongs to~$W^{1,\infty}(\Si \cap \Om )$, while the additional smoothness assumptions may be dropped. Moreover, when $\Si = \RR^N$, also the assumption $u \in W^{1,\infty}(\Si \cap \Om )$ could be dropped and replaced just by the assumption that $u$ is bounded.

Indeed, in the case where $\Si=\RR^N$, whenever $\Om$ is a $C^{1, \ga}$ domain, with~$0 < \ga \le 1$, then \cite[Theorem 1]{Li} ensures that bounded solutions to \eqref{eq:weighted anisotropic problem in RN} are automatically~$C^{1,\ga}(\ol{\Om})$, for some~$0 < \ga \le 1$. Notice that, our regularity assumption on $H$ ensures that $\Om$ in Theorem \ref{thm:MAIN GENERAL}, which is a Wulff ball, is of class $C^2$. 
Thus, in the case when $\Si=\RR^N$, all the regularity assumptions on $u$ in Theorem \ref{thm:MAIN GENERAL} are satisfied by any bounded solution.  

When $\Si \subsetneq \RR^N$, we have to consider the mixed boundary value problem \eqref{eq:GENERALPB weighted anisotropic problem in cones}. For such a problem, regularity up to the (whole) boundary is a delicate issue, and it strongly depends on how $\Si$ and $\Om$ intersect. 
				Nevertheless, \cite[Theorem 1]{Li} still ensures the $C^{1,\ga}$-regularity (of bounded solutions) up to $\Ga_0$; the $C^{1,\ga}$-regularity up to $\Ga_1 \setminus \left\lbrace 0 \right\rbrace$ could be obtained by \cite[Theorem 2]{Li}, if we further assume that $\Ga_1 \setminus \left\lbrace 0 \right\rbrace$ is of class $C^{1,\ga}$, with~$0 < \ga \le 1$, and $w$ satisfy the additional requirement
\begin{equation}
	w \text{ is positive and H\"older continuous on } \ol{ \Si \cap \Om} \setminus \left\lbrace 0 \right\rbrace .
\label{eq:assumptions on w for main theorems4}		
\end{equation}
We stress that our results are presented without these additional assumptions: that is, we do not require~\eqref{eq:assumptions on w for main theorems4} and we only ask~$\Si$ to be convex (with no need of the additional assumption that $\Ga_1 \setminus \left\lbrace 0 \right\rbrace$ is of class $C^{1, \ga}$). 
				
Essentially, the main regularity assumption that we impose on $u$ in Theorem \ref{thm:MAIN GENERAL} (when $\Si \subsetneq \RR^N$) is that~$u$ belongs to~$W^{1, \infty} ( \Si \cap \Om )$. This is needed in order to ensure the validity of certain integral identities, such as the Pohozaev-type identity \eqref{eq:weighted anisotropic Pohozaev}, by using an approximation argument.
We notice that, when $H$ is the Euclidean norm and $w \equiv 1$, \cite[Theorem 1.2]{CM} guarantees 
the~$W^{1, \infty}$
regularity up to $\Ga_1$ (including the vertex). As pointed out in \cite{PT}, the assumption $u \in W^{1, \infty} ( \Si \cap \Om )$ can be seen as a gluing condition between the cone and $\Ga_0$. Related to this, we mention that \cite[Proposition 6.1]{PT} guarantees $C^2 (\ol{\Om} \setminus \left\lbrace 0 \right\rbrace)$-regularity for weak solutions of
$$
\begin{cases}
- \De u =1 \ & \mbox{ in } \ \Si \cap \Om, 
\\
u=0 \ & \mbox{ on } \ \Ga_0 ,
\\
\langle \na u , \nu \rangle = 0 \ & \mbox{ on } \ \Ga_1 \setminus \left\lbrace 0 \right\rbrace ,
\end{cases}
$$
in general domains $\Om$, whenever $\Ga_0$ and $\Ga_1$ intersect orthogonally.
}	
\end{rem}

\begin{rem}[On the weights]
{\rm
The assumptions \eqref{eq:assumptions on w for main theorems}, \eqref{eq:assumptions on w for main theorems2} and \eqref{eq:assumptions on w for main theorems3} on $w$ guarantee the validity of the weighted anisotropic isoperimetric inequality \eqref{eq:weighted anisotropic isoperimetric in cones} in convex cones, obtained in \cite{CRS}.
The homogeneity assumption on $w$ contained in \eqref{eq:assumptions on w for main theorems} is also used to obtain the Pohozaev-type identity in Lemma \ref{lem:Pohozaev General}.

Various examples of weights satisfying \eqref{eq:assumptions on w for main theorems}, \eqref{eq:assumptions on w for main theorems2} and \eqref{eq:assumptions on w for main theorems3}
%
%
are provided in \cite{CRS}. 
%
%
We point out that all the weights provided in those examples also satisfy assumption \eqref{eq:assumptions on w for main theorems4}, up to substituting $\Si$ with a smaller cone $\Si' \subset \Si$ (see also item (iii) at page 2983 in \cite{CRS}). 
}
\end{rem}

As pointed out in \cite{CRS}, the equality holds in the weighted anisotropic isoperimetric
inequality in~\eqref{eq:weighted anisotropic isoperimetric in cones} whenever $\Si \cap E = \Si \cap B_r$, where $r$ is any positive number. That is, Wulff balls (centered at the origin) intersected with $\Si$ are always minimizers of the isoperimetric inequality \eqref{eq:weighted anisotropic isoperimetric in cones}. 
However, the uniqueness\footnote{Whenever a line of direction $a \in \RR^N$ is contained in $\Si$, \textit{uniqueness} will be up to translations in direction $a$ (see Section \ref{sec:isoperimetric inequalities}).} of those minimizers (i.e., the characterization of the equality sign in \eqref{eq:weighted anisotropic isoperimetric in cones}) in general is still not available in the literature (see Section \ref{sec:isoperimetric inequalities}).
Whenever the uniqueness of those minimizers is available, it is not difficult to obtain the radial symmetry of $u$, as a corollary of Theorem \ref{thm:MAIN GENERAL}.

The uniqueness of the minimizers can be obtained in the unweighted (i.e., when $w \equiv 1$) anisotropic setting by adapting the ideas in \cite{FI} to the anisotropic setting (see Theorem \ref{thm:characterization unweighted anisotropic}).
Notice that in the unweighted case we have $D=N$ (in Theorem \ref{thm:MAIN GENERAL}), and \eqref{eq:GENERALPB weighted anisotropic problem in cones} reads as follows:
\begin{equation}\label{eq:UNweighted problem in cones}
	\begin{cases}
		-\dv\left\{  H( \na u )^{p-1}\, \na_{ \xi} H( \na u) \right\}= f(u) \ & \mbox{ in } \ \Si \cap \Om, 
		\\
				u \ge 0 & \mbox{ in } \ \Si \cap \Om, \\ 
		u=0 \ & \mbox{ on } \ \Ga_0 ,
		\\
		\langle \na_\xi H(\na u) , \nu \rangle = 0 \ & \mbox{ on } \ \Ga_1 \setminus \left\lbrace 0 \right\rbrace .
	\end{cases}
\end{equation}

In this setting, the uniqueness of the minimizers of the isoperimetric inequality and Theorem \ref{thm:MAIN GENERAL} lead to

\begin{thm}[Symmetry in convex cones in the anisotropic unweighted setting]
	\label{thm:cones}
	Let $\Om$ be a Wulff ball in $\RR^N$ centered at $0$, $N \ge 2$, and let $1 < p < \infty $. Assume that
	$f \in L^\infty_{\mathrm{loc}} ( \left[0, \infty \right) )$ is nonnegative. Let $u \in C^1 \left( (\Si \cap \Om) \cup \Ga_0 \cup \Ga_1 \setminus \left\lbrace 0 \right\rbrace \right) \cap W^{1,\infty}(\Si \cap \Om )$ be a solution of \eqref{eq:UNweighted problem in cones} in
	the weak sense. Assume that either 
	\begin{equation}\tag{a$'$}\label{eq:condizione a}
		p \ge N ,
	\end{equation}
	or
	\begin{equation}\tag{b$'$}\label{eq:condizione b}
		p < N \quad \text{and, for some nonincreasing function $\phi \ge 0$, we have } \phi \le f \le \frac{N p}{N-p} \phi.
	\end{equation}
	
		Then, $u$ is a radially symmetric and radially nonincreasing function. 
		Moreover, by setting $M$ as in \eqref{eq:def M in Main thm},
		\begin{equation*}
			u \text{ is radially strictly decreasing on } \left\lbrace 0 < u < M \right\rbrace ,
		\end{equation*}
		and
		$$
		\left\lbrace 0 < u < M \right\rbrace \quad \text{is a Wulff annulus or a punctured Wulff ball centered at } 0 \text{ intersected with } \Si.
		$$
\end{thm}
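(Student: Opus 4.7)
My plan is to combine Theorem~\ref{thm:MAIN GENERAL}, in the unweighted regime, with the characterization of equality cases in the anisotropic isoperimetric inequality in convex cones furnished by Theorem~\ref{thm:characterization unweighted anisotropic}. The upshot is that the level sets of $u$ are forced to be Wulff balls, which, together with the constancy of $H(\na u)$ on each level set, delivers the Finsler-radial symmetry of $u$.

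First, I apply Theorem~\ref{thm:MAIN GENERAL} with $w\equiv 1$, so that $\la=0$ and $D=N$; hypotheses~\eqref{eq:condizione a}--\eqref{eq:condizione b} are precisely~\eqref{eq:GENERAL condizione a}--\eqref{eq:GENERAL condizione b} for $D=N$. This gives, for a.e. $t\in(0,M)$, that $\Si\cap\{u>t\}$ saturates the unweighted anisotropic isoperimetric inequality in $\Si$, and that $H(\na u)$ is constant on $\{u=t\}$. Next, I invoke the uniqueness part of Theorem~\ref{thm:characterization unweighted anisotropic}: the equality case forces $\Si\cap\{u>t\}$ to coincide (up to negligible sets) with $\Si\cap B_{r(t)}(x(t))$ for some $r(t)>0$ and some $x(t)$, with $x(t)$ determined only up to translations along directions belonging to the lineality space of $\Si$.

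To pin down the common center, I would distinguish two cases. When $\Si\subsetneq\RR^N$, the absence of lines through $0$ in $\Si$, together with compatibility with the conic structure of $\Si$, forces $x(t)=0$ for every admissible $t$. When $\Si=\RR^N$, every Wulff ball is an isoperimetric minimizer, and the added input is the Dirichlet condition $u=0$ on $\pa\Om=\pa B_R(0)$: the outermost level sets $\{u>t\}$ with $t\to 0^+$ invade $\Om$, and being Wulff balls nested inside the Wulff ball $\Om$, a limit argument forces their centers to coincide with the center of $\Om$, namely $0$; nesting then propagates this to all $t$. This is, in my view, the main technical hurdle of the proof and is precisely the reason why the uniqueness of isoperimetric minimizers (and not only their existence) is required.

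Finally, by continuity of $u$ the relation $\{u>t\}=\Si\cap B_{r(t)}$ extends to every $t\in(0,M)$, with $t\mapsto r(t)$ nonincreasing. Combined with the fact that $H(\na u)$ is constant on each level set, this identifies $\na u$ as parallel to $\na H_0$ on $\Si\cap\Om$ and forces $u$ to depend only on $H_0(x)$, yielding the claimed Finsler-radial symmetry and, by nesting, radial monotonicity. The strict radial monotonicity on $\{0<u<M\}$ is a consequence of $H(\na u)$ being strictly positive there (if it vanished on an entire level set, that set would carry positive thickness, contradicting the sharp isoperimetric equality satisfied by nearby superlevel sets). The description of $\{0<u<M\}$ as a Wulff annulus or a punctured Wulff ball intersected with $\Si$ is then immediate from the fact that $\{u\ge M\}$ is either a concentric Wulff ball or the single point $\{0\}$.
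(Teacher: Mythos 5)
Your overall strategy — combining Theorem~\ref{thm:MAIN GENERAL} (with $w\equiv1$, $\la=0$, $D=N$) with the characterization of equality in Theorem~\ref{thm:characterization unweighted anisotropic} — matches the paper. However, the crucial step of pinning the centers $x(t)$ at the origin has a genuine gap, and the paper has a dedicated result, Lemma~\ref{lem:conclusion in ball}, to handle exactly this.

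Your case distinction is flawed in both branches. First, $\Si\subsetneq\RR^N$ does \emph{not} imply that $\Si$ contains no lines: a half-space, for instance, is a proper convex cone yet its lineality space is $(N-1)$-dimensional. In the decomposition $\Si=\RR^k\times\tilde\Si$, all intermediate cases $1\le k\le N-1$ occur, and for those the characterization theorem only constrains $x(t)\in\RR^k\times\{0\}$; no purely geometric ``conic compatibility'' reduces this to $x(t)=0$. Second, for $\Si=\RR^N$ the nesting-plus-limit argument does not close the loop: even if one could show that $\{u>0\}=\Om$ so that $x(t)\to0$ and $r(t)\to R$ as $t\to0^+$, the containment $B_{r(t)}(x(t))\subset B_{r(s)}(x(s))$ for $t>s$ only gives $H_0(x(t)-x(s))\le r(s)-r(t)$, which permits the centers to drift away from $0$ as $t$ increases. (Moreover, $\{u>0\}$ need not equal $\Om$ at all, as the ``Wulff annulus'' conclusion in the theorem itself makes clear.) What is really needed — and what Lemma~\ref{lem:conclusion in ball} supplies — is the analytic input of item~(ii) of Theorem~\ref{thm:MAIN GENERAL}: one assumes $x(\cdot)$ is nonconstant, picks $t_0$ with $x'(t_0)\neq0$, tracks the two antipodal points $P(t)=x(t)+\rho(t)\,\ol\xi$ and $Q(t)=x(t)-\rho(t)\,\ol\xi$ on $\pa\{u>t\}$, differentiates the identity $u(P(t))=u(Q(t))=t$ at $t_0$, and uses the constancy of $H(\na u)$ on $\{u=t_0\}$ to derive $H_0(x'(t_0))=0$, a contradiction. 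This argument, inherited from Serra, is where the real work lies, and your proposal omits it.

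Two smaller points. Your justification of strict radial monotonicity via ``positive thickness of level sets'' is too vague; the paper instead uses the equation and the maximum principle: since $f\ge0$ cannot vanish identically on $\{u>t\}$ for $t\in(0,M)$, the Gauss--Green identity forces $H(\na u)>0$ there. Finally, your closing sentence ``this identifies $\na u$ as parallel to $\na H_0$'' presupposes the centers have already been pinned at $0$, so it cannot serve as part of the argument to pin them.
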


We point out that conditions~\eqref{eq:condizione a} and~\eqref{eq:condizione b}
in Theorem~\ref{thm:cones} correspond, respectively, to conditions~\eqref{eq:GENERAL condizione a} and~\eqref{eq:GENERAL condizione b} of
Theorem~\ref{thm:MAIN GENERAL}
when $D=N$.

Also,
as usual, when $\Si = \RR^N$, we have $\Ga_1 = \varnothing$ and the last condition in \eqref{eq:UNweighted problem in cones} is trivially satisfied.
In this case, \eqref{eq:UNweighted problem in cones} simply becomes:
\begin{equation}
	\begin{cases}
		\label{eq:Unweighted problem in Wulff ball}
		-\dv\left\{   H( \na u )^{p-1}\, \na_{ \xi} H( \na u) \right\}= f(u) \ & \mbox{ in } \ \Om, 
		\\
				 u \ge 0 & \mbox{ in } \ \Om, \\ 
		u=0 \ & \mbox{ on } \ \pa \Om ,
	\end{cases}
\end{equation}
and Theorem \ref{thm:cones} reads as follows.

\begin{thm}[Symmetry in balls in the anisotropic unweighted setting]
	\label{thm:MAIN unweighted in Wulff ball}
	Let $\Om$ be a Wulff ball in $\RR^N$, $N \ge 2$, and let $1 < p < \infty $. Assume that
	$f \in L^\infty_{\mathrm{loc}} ( \left[0, \infty \right) )$ is nonnegative.
	Let $u \in 
	C^1( \ol{ \Om } )$
	be a solution of \eqref{eq:Unweighted problem in Wulff ball} in the weak sense. Assume that either \eqref{eq:condizione a} or \eqref{eq:condizione b} of Theorem \ref{thm:cones} holds true.
	
	Then, $u$ is a radially symmetric and radially nonincreasing function. 
	Moreover, 
	\begin{equation*}
		u  \text{ is radially strictly decreasing on } \left\lbrace 0 < u < \max_{\ol{\Om}} u \right\rbrace  ,
	\end{equation*}
	and
	\begin{equation*}
	\left\lbrace 0 < u < \max_{ \ol{\Om} } u \right\rbrace \quad \text{is a Wulff annulus or a punctured Wulff ball.}
	\end{equation*}
\end{thm}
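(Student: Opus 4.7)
The plan is to derive Theorem~\ref{thm:MAIN unweighted in Wulff ball} as the $\Si = \RR^N$ specialization of Theorem~\ref{thm:cones}, which in turn follows from Theorem~\ref{thm:MAIN GENERAL} by taking $w \equiv 1$. I will outline the argument directly from Theorem~\ref{thm:MAIN GENERAL}. Taking $\Si = \RR^N$ forces $\Ga_1 = \varnothing$, so the mixed problem \eqref{eq:UNweighted problem in cones} collapses to the Dirichlet problem \eqref{eq:Unweighted problem in Wulff ball}; taking $w \equiv 1$ gives $\la = 0$ and $D = N$, which identifies \eqref{eq:condizione a}--\eqref{eq:condizione b} with \eqref{eq:GENERAL condizione a}--\eqref{eq:GENERAL condizione b}. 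The hypothesis $u \in C^1(\ol{\Om})$ is strictly stronger than the regularity required by Theorem~\ref{thm:MAIN GENERAL}, so all of its conclusions apply.

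Applying Theorem~\ref{thm:MAIN GENERAL} yields, for a.e. $t \in (0,M)$, both that $\{u > t\}$ achieves equality in the unweighted anisotropic isoperimetric inequality in $\RR^N$ and that $H(\na u)$ is constant on $\{u = t\}$. The next step is to invoke the characterization of the equality cases (Theorem~\ref{thm:characterization unweighted anisotropic}), which forces each super-level set to coincide, modulo a null set, with a Wulff ball
\begin{equation*}
\{u > t\} = B_{r(t)}^{H_0}(x(t)), \qquad r(t) > 0, \ x(t) \in \RR^N.
\end{equation*}

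The heart of the argument is to show that $x(t)$ is a common point $x_0$ for a.e. $t$. I would pick $t_n \uparrow M$ along the a.e. set; continuity of $u$ gives $|\{u > t_n\}| \to 0$, hence $r(t_n) \to 0$, and the nestedness $B_{r(t_{n+1})}^{H_0}(x(t_{n+1})) \subset B_{r(t_n)}^{H_0}(x(t_n))$ forces these Wulff balls to shrink in Hausdorff sense to a single point $x_0 \in \ol{\Om}$. For any other admissible $t$, the inclusion $B_{r(t_n)}^{H_0}(x(t_n)) \subset B_{r(t)}^{H_0}(x(t))$ for large $n$ gives $x_0 \in \ol{B_{r(t)}^{H_0}(x(t))}$; combining this with clause (ii) of Theorem~\ref{thm:MAIN GENERAL} (which, via Finsler duality, forces $\na u$ on $\{u = t\}$ to be a nonzero multiple of $\na H_0(\cdot - x(t))$), together with the $C^1$ regularity of $u$ and, if needed, the Pohozaev-type identity \eqref{eq:weighted anisotropic Pohozaev}, pins down $x(t) = x_0$.

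Once the common center $x_0$ is in hand, the coarea formula and the constancy of $H(\na u)$ on each Wulff sphere $\partial B_{r(t)}^{H_0}(x_0)$ allow me to write $u(x) = U(H_0(x-x_0))$ for some nonincreasing $U$. Strict monotonicity of $U$ on the subinterval corresponding to $\{0 < u < M\}$ is then immediate, since there the level sets are genuinely distinct Wulff spheres and clause (ii) forces $H(\na u) > 0$. Consequently $\{0 < u < M\}$ is the region enclosed by two concentric Wulff spheres centered at $x_0$, which is a Wulff annulus when the inner radius is positive and a punctured Wulff ball when it vanishes. The main obstacle is the ``centers coincide'' step: in the isotropic case one can exploit reflection symmetries, but here the absence of rotational invariance forces one to combine the Finsler-normal structure on the level sets with continuity of $\na u$ in a more delicate way.
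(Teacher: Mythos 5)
Your overall skeleton matches the paper's: specialize Theorem~\ref{thm:MAIN GENERAL} to $\Si=\RR^N$, $w\equiv 1$ (so $D=N$ and the two sets of hypotheses coincide), invoke the characterization of isoperimetric minimizers (Theorem~\ref{thm:characterization unweighted anisotropic}) to learn that a.e.\ super-level set is a Wulff ball $B_{\rho(t)}^{H_0}(x(t))$, and then prove that the centers $x(t)$ coincide. However, the concentricity step --- which you yourself flag as the main obstacle --- is not actually carried out, and the sketch you offer does not close the gap. Knowing that $x_0$ (the Hausdorff limit of the shrinking balls as $t_n\uparrow M$) lies in $\ol{B_{\rho(t)}^{H_0}(x(t))}$ for each $t$ only places $x_0$ somewhere \emph{inside} each Wulff ball, not at its center. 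The grab-bag you list afterwards does not supply the missing link: that $\na u$ is parallel to $\na H_0(\cdot-x(t))$ on $\{u=t\}$ is mere level-set geometry (it follows from $\{u=t\}$ being a Wulff sphere, not from clause~(ii)), and the Pohozaev identity has already been consumed in the proof of Theorem~\ref{thm:MAIN GENERAL}. The paper proceeds quite differently in Lemma~\ref{lem:conclusion in ball}: it first shows $\rho(t)$ is locally Lipschitz (because $\na u\neq 0$ on $\{0<u<M\}$, whence $\cH^N(\{u>t\})$ is locally Lipschitz), then deduces from the nestedness $B_{\rho(s)}^{H_0}(x(s))\subset B_{\rho(t)}^{H_0}(x(t))$ for $t<s$ that $H_0(x(t)-x(s))\le \rho(t)-\rho(s)$, so $x(t)$ is also locally Lipschitz; if $x$ were nonconstant there would be $t_0$ with $x'(t_0)\neq 0$, and differentiating $u(P(t))\equiv t\equiv u(Q(t))$ at $t_0$ along the antipodal boundary curves $P(t)=x(t)+\rho(t)\ol{\xi}$, $Q(t)=x(t)-\rho(t)\ol{\xi}$ with $\ol{\xi}=x'(t_0)/H_0(x'(t_0))$, and invoking the constancy of $H(\na u)$ on $\{u=t_0\}$, yields $H_0(x'(t_0))=0$, a contradiction. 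This antipodal-differentiation argument is the idea your outline lacks.

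There is a second inaccuracy: you assert that clause~(ii) of Theorem~\ref{thm:MAIN GENERAL} forces $H(\na u)>0$ on $\{0<u<M\}$. It does not --- clause~(ii) only gives constancy of $H(\na u)$ on each level set, and that constant could a priori vanish. The paper derives nonvanishing inside the proof of Lemma~\ref{lem:conclusion in ball} from the Gauss--Green identity, namely $P_{H}(\{u>t\})\,H(\na u)^{p-1}=\int_{\{u>t\}} f(u)$, combined with the maximum principle (Lemma~\ref{lem:maximum principle in cones}), which rules out $f\equiv 0$ on $\{u>t\}$ for $t\in(0,M)$. This nonvanishing is load-bearing: besides giving strict monotonicity of the radial profile, it is precisely what makes $\rho$ (and hence $x$) locally Lipschitz in the concentricity argument above.
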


For completeness, in light of Remark \ref{rem:on the regualrity},
we point out that the regularity assumption on the solution~$u$
taken in Theorem~\ref{thm:MAIN unweighted in Wulff ball} can be relaxed
by assuming only that the solution is
bounded.

\medskip

The uniqueness of the minimizers of the isoperimetric inequality is also available in the weighted isotropic setting (i.e., when $H$ is the Euclidean norm).
In this case, \eqref{eq:GENERALPB weighted anisotropic problem in cones} reads as follows
\begin{equation}\label{eq:Weighted problem in cones}
	\begin{cases}
		-\dv\left\{ w(x) \,  | \na u |^{p-2}\,  \na u \right\}= f(u) \, w(x)  \ & \mbox{ in } \ \Si \cap \Om, 
		\\
				 u \ge 0 & \mbox{ in } \ \Si \cap \Om, \\ 
		u=0 \ & \mbox{ on } \ \Ga_0 ,
		\\
		\langle \na u , \nu \rangle = 0 \ & \mbox{ on } \ \Ga_1 \setminus \left\lbrace 0 \right\rbrace .
	\end{cases}
\end{equation}

In this case, the uniqueness of the minimizers of the isoperimetric inequality and Theorem \ref{thm:MAIN GENERAL} lead\footnote{The
recent examples in~\cite{I} for isoperimetric
minimizers that
are half-balls instead of full balls
may also yield interesting statements
in terms of lack of symmetry of solutions to be compared to Theorem~\ref{thm:weighted cones} here.}
to

\begin{thm}[Symmetry in convex cones in the isotropic weighted setting]
\label{thm:weighted cones}
	Let $\Om$ be a (Euclidean) ball in $\RR^N$ centered at $0$, $N \ge 2$, and let $1 < p < \infty $. Assume that
	$f \in L^\infty_{\mathrm{loc}} ( \left[0, \infty \right) )$ is nonnegative
	and the weight $w$ satisfies \eqref{eq:assumptions on w for main theorems}, \eqref{eq:assumptions on w for main theorems2},
	and \eqref{eq:assumptions on w for main theorems3}.
	 Let $u \in C^1 \left( (\Si \cap \Om) \cup \Ga_0 \cup \Ga_1 \setminus \left\lbrace 0 \right\rbrace \right) \cap W^{1,\infty}(\Si \cap \Om )$ be a solution of \eqref{eq:Weighted problem in cones} in
	the weak sense.

	 Assume that either \eqref{eq:GENERAL condizione a} or \eqref{eq:GENERAL condizione b} of Theorem \ref{thm:MAIN GENERAL} holds true.
	
		Then, $u$ is a radially symmetric and radially nonincreasing function. 
		Moreover, by setting $M$ as in \eqref{eq:def M in Main thm},
		\begin{equation*}
			u \text{ is radially strictly decreasing on } \left\lbrace 0 < u < M \right\rbrace ,
		\end{equation*}
		and
		$$
		\left\lbrace 0 < u < M \right\rbrace \quad \text{is a annulus or a punctured ball centered at } 0 \text{ intersected with } \Si .
		$$
\end{thm}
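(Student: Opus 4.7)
The plan is to deduce Theorem~\ref{thm:weighted cones} directly from Theorem~\ref{thm:MAIN GENERAL} by specializing to the isotropic case $H(\xi)=|\xi|$ and then using the characterization of equality cases of the weighted isoperimetric inequality in the convex cone $\Si$ to identify the superlevel sets of $u$ as centered Euclidean spherical sectors.

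First, I would verify that every hypothesis of Theorem~\ref{thm:MAIN GENERAL} is in force: the Euclidean norm satisfies \eqref{Hcond1}--\eqref{Hcond3}, is $C^2$ off the origin, and is uniformly elliptic, with $B$ the unit Euclidean ball; the assumptions on $f$, $w$, $u$, $p$, and the dichotomy \eqref{eq:GENERAL condizione a}/\eqref{eq:GENERAL condizione b} are exactly those imposed here. Theorem~\ref{thm:MAIN GENERAL} then yields, for a.e. $t \in (0,M)$, that the superlevel set $\left\lbrace u > t\right\rbrace$ realizes equality in the weighted isoperimetric inequality \eqref{eq:weighted anisotropic isoperimetric in conesEQ} in $\Si$ and that $|\na u|$ is constant on the level set $\left\lbrace u=t\right\rbrace$.

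Next, I would invoke the characterization of the equality cases of the weighted isoperimetric inequality of~\cite{CRS} in the isotropic weighted setting, which is precisely the uniqueness statement announced just before the theorem and to be recalled in detail in Section~\ref{sec:isoperimetric inequalities}. Combined with the homogeneity \eqref{eq:assumptions on w for main theorems} of $w$ and the fact that $\Si$ has vertex at the origin, this forces $\Si \cap \left\lbrace u>t \right\rbrace = \Si \cap B_{r(t)}$ for a.e. $t \in (0,M)$, where $B_{r(t)}$ is the Euclidean ball of radius $r(t)$ centered at $0$. Continuity of $u$ on $\ol{\Si \cap \Om}$, guaranteed as in Remark~\ref{rem:on the regualrity}, then upgrades the identification to every $t \in (0,M)$ by approximating from below along an increasing sequence of good $t$'s; monotonicity of $t \mapsto r(t)$ follows from the nesting $\left\lbrace u > t_2 \right\rbrace \subset \left\lbrace u > t_1 \right\rbrace$ whenever $t_1 < t_2$.

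From the fact that each superlevel set is a centered Euclidean sector $\Si \cap B_{r(t)}$ and from conclusion (ii) of Theorem~\ref{thm:MAIN GENERAL}, I would then conclude that $u(x) = U(|x|)$ for some nonincreasing profile $U$ on an interval of radii. Strict monotonicity on $\left\lbrace 0 < u < M \right\rbrace$ follows because, on the range of radii where the level set is a genuine sector, $|\na u|$ is a (nonzero) constant along $\left\lbrace u = t \right\rbrace$, hence $U$ is strictly decreasing there; the description of $\left\lbrace 0 < u < M \right\rbrace$ as an annular sector or a punctured ball intersected with $\Si$ is then immediate. The main obstacle will be the step in which one rules out noncentered minimizers of the weighted isoperimetric inequality: in cones with symmetry directions one could a priori find translated balls achieving equality, and one needs to use the homogeneity of $w$, the fact that the vertex of $\Si$ is at the origin, and the Neumann-type condition on $\Ga_1\setminus\{0\}$ to guarantee that the minimizing balls must be centered at $0$. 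Once this uniqueness is in hand, the remainder of the argument is a direct specialization of the one for Theorem~\ref{thm:cones}.
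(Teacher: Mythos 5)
Your proposal follows the same high-level route as the paper (specialize Theorem~\ref{thm:MAIN GENERAL} to $H = |\cdot|$, then invoke the characterization of equality in the weighted isoperimetric inequality in cones, then conclude radial symmetry), but there is a genuine gap exactly where you flag ``the main obstacle'': you never close it, and the tools you propose to close it with are not the right ones.

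To be concrete: the characterization from~\cite{CGPRS}, recalled in the paper as Theorem~\ref{thm:characterization isotropic weighted ISOperimetric}, does \emph{not} say that equality forces $\left\lbrace u>t\right\rbrace = \Si\cap B_{r(t)}(0)$. It says the minimizer is a ball $B_{r(t)}(x(t))$ centered at some $x(t)\in\RR^k\times\{0_{\RR^{N-k}}\}$, after writing $\Si = \RR^k\times\tilde\Si$; when $k\geq 1$ (i.e.\ $\Si$ contains lines), the center genuinely may be translated. Your first use of the characterization therefore asserts something false, and you only correct course by calling it an obstacle. The actual mechanism that rules out a drifting center in the paper is Lemma~\ref{lem:conclusion in ball}: one shows $\rho(t)$ and hence $x(t)$ are locally Lipschitz, supposes $x'(t_0)\neq 0$ at some $t_0$, differentiates $u(P(t))$ and $u(Q(t))$ at $t_0$ along antipodal points $P,Q$ on $\partial B_{\rho(t_0)}(x(t_0))$ in the direction $\bar\xi = x'(t_0)/H_0(x'(t_0))$, and contrasts the two resulting identities using the \emph{constancy of $H(\nabla u)$ on $\left\lbrace u=t_0\right\rbrace$} (conclusion (ii) of Theorem~\ref{thm:MAIN GENERAL}) to force $H_0(x'(t_0))=0$, a contradiction. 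The hypotheses you list as the candidate tools (homogeneity of $w$, vertex of $\Si$ at the origin, the Neumann condition on $\Ga_1\setminus\{0\}$) do not by themselves rule out translated minimizers and are not what the paper uses at this step; the indispensable ingredient is (ii), which your proposal only calls upon later for strict monotonicity of the radial profile. Finally, the nonvanishing of $\nabla u$ on $\left\lbrace 0<u<M\right\rbrace$, which you take for granted when asserting $U$ is strictly decreasing, is itself established inside Lemma~\ref{lem:conclusion in ball} via the Gauss--Green identity and the maximum principle ($f\not\equiv 0$ on each superlevel set), not merely from (ii). Until you supply an argument of this kind, the proof is incomplete.
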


We point out that
Theorem \ref{thm:MAIN unweighted in Wulff ball} provides a generalization to the anisotropic setting of the celebrated result contained in \cite{GNN} and such a generalization is new even for $p=2$ whenever $N>2$. We recall that \cite{GNN} exploits the moving planes method, which seems not to be helpful for anisotropic problems. 
Instead, the method via integral (in)equalities used here (and based on \cite{L})
turns out to be effective in this setting.

When $\Si \subsetneq \RR^N$, the results presented here are
new even in the isotropic and unweighted setting (i.e., when~$H$ is the Euclidean norm and $w \equiv 1$) whenever $ 2 \neq p \neq N$.
In the case $p=2$ and $\Si \subsetneq \RR^N, \, N>2$, the
symmetry result
was previously obtained only relying on the moving planes method (and in the unweighted isotropic setting) in \cite{BP}: even in this special case, the present paper provides a new approach to the problem via integral (in)equalities (based on \cite{L}) which complements \cite{BP}.

Also, the results obtained here in the isotropic and weighted setting (i.e., with $w \not\equiv 1$)
are new for any $p$.

\bigskip

Theorems \ref{thm:cones}, \ref{thm:MAIN unweighted in Wulff ball}
and \ref{thm:weighted cones} can be rephrased in terms of the variational formulation of \eqref{eq:GENERALPB weighted anisotropic problem in cones}, as follows. 
We restrict this formulation to the case when $f$ is a continuous nonlinearity to avoid problems with the differentiability of the functional.

\begin{thm}[Variational formulation of Theorems \ref{thm:cones}, \ref{thm:MAIN unweighted in Wulff ball} and \ref{thm:weighted cones}]
Let $\Om$ be a Wulff ball in $\RR^N$ centered at $0$, $N \ge 2$, and let $1 < p < \infty $. 
Assume that $f$ is a continuous nonnegative function defined on $\left[0, \infty \right)$.
Let
$$
u \in W^{1,\infty}_{\Ga_0}(\Si \cap \Om):= \left\lbrace v \in W^{1,\infty}(\Si \cap \Om) \, : \, v=0 \text{ on } \Ga_0 \right\rbrace
$$
be a nonnegative critical point of the functional
$$
\int_{\Si \cap \Om} \left\lbrace \frac{H^p ( \na u )}{p} - F(u) \right\rbrace w(x) \, d \cH^{N} , \quad \text{where } \, F(s):=\int_0^s f( \tau) \, d \tau .
$$
Assume that $u \in C^1 \left( (\Si \cap \Om) \cup \Ga_1 \setminus \left\lbrace 0 \right\rbrace \right)$ and that,
either 
\begin{equation*}
\begin{split}
& H \text{ is any uniformly elliptic norm of class } C^2 (\RR^N \setminus \left\lbrace 0 \right\rbrace) , \,  w \equiv 1 ,
\\
& \text{and either \eqref{eq:condizione a} or \eqref{eq:condizione b} of Theorem \ref{thm:cones} holds true, }
\end{split}
\end{equation*}
or
\begin{equation*}
\begin{split}
& H \text{ is the euclidean norm, } w \text{ is any weight satisfying \eqref{eq:assumptions on w for main theorems}, \eqref{eq:assumptions on w for main theorems2} and \eqref{eq:assumptions on w for main theorems3},}
\\
& \text{and either \eqref{eq:GENERAL condizione a} or \eqref{eq:GENERAL condizione b} of Theorem \ref{thm:MAIN GENERAL} holds true.}
\end{split}
\end{equation*}

Then, $u$ is a radially symmetric and radially nonincreasing function. Moreover, by setting $M$ as in \eqref{eq:def M in Main thm}, $u$ is radially strictly decreasing on the set $\left\lbrace 0 < u < M \right\rbrace$, which is a Wulff annulus or a punctured Wulff ball centered at $0$ intersected with $\Si$.
\end{thm}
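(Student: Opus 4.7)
The plan is to reduce the statement to either Theorem~\ref{thm:cones} or Theorem~\ref{thm:weighted cones} by showing that every critical point of the given functional is a weak solution of the associated Euler--Lagrange problem \eqref{eq:GENERALPB weighted anisotropic problem in cones}. Once this identification is made, all hypotheses of the previously established theorems are in place and the conclusions on radial symmetry, strict monotonicity, and the shape of $\{0 < u < M\}$ follow immediately.

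First, I would compute the first variation. Given any $\varphi \in W^{1,\infty}_{\Ga_0}(\Si \cap \Om)$, homogeneity of $H$ yields $\na_\xi \bigl( H^p/p \bigr) (\xi) = H(\xi)^{p-1} \na_\xi H(\xi)$, so that
\begin{equation*}
0 = \frac{d}{dt}\bigg|_{t=0}\int_{\Si \cap \Om}\left\{\frac{H^p(\na(u+t\varphi))}{p} - F(u+t\varphi)\right\} w\, d\cH^N
= \int_{\Si \cap \Om}\left\{ H(\na u)^{p-1}\langle \na_\xi H(\na u), \na\varphi\rangle - f(u)\varphi\right\} w\, d\cH^N.
\end{equation*}
This is exactly the weak formulation of \eqref{eq:GENERALPB weighted anisotropic problem in cones}. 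The Dirichlet condition $u=0$ on $\Ga_0$ is built into the admissible class. The Neumann-type condition on $\Ga_1\setminus\{0\}$ then arises as the natural boundary condition: testing against $\varphi$ supported near $\Ga_1\setminus\{0\}$ and using the assumed $C^1$ regularity of $u$ on $(\Si\cap\Om)\cup(\Ga_1\setminus\{0\})$ together with the local Lipschitz regularity of $w$ in $\Si$ permits an integration by parts, whose boundary term on $\Ga_1\setminus\{0\}$ vanishes for arbitrary $\varphi$ precisely when $\langle \na_\xi H(\na u), \nu\rangle = 0$ there.

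Next, I would verify the remaining hypotheses of Theorem~\ref{thm:cones} or Theorem~\ref{thm:weighted cones}. Nonnegativity of $u$ is assumed. Since $u\in W^{1,\infty}(\Si\cap\Om)$ is bounded and $f$ is continuous on $[0,\infty)$, the composition $f(u)$ is bounded, so in particular $f \in L^\infty_{\mathrm{loc}}([0,\infty))$. The growth/monotonicity condition \eqref{eq:condizione a}/\eqref{eq:condizione b} (in the unweighted anisotropic case) or \eqref{eq:GENERAL condizione a}/\eqref{eq:GENERAL condizione b} (in the isotropic weighted case) is assumed directly in the statement. With these facts in hand, Theorem~\ref{thm:cones} respectively Theorem~\ref{thm:weighted cones} applies verbatim and delivers all three conclusions.

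The only delicate point is controlling the contribution of the vertex of the cone when integrating by parts to identify the natural boundary condition on $\Ga_1\setminus\{0\}$. This is handled by the standard device of inserting cutoff functions vanishing in a small Wulff ball $B_\ve$ around the origin; the uniform bound $u\in W^{1,\infty}(\Si\cap\Om)$ together with integrability of $w$ near the vertex (which follows from \eqref{eq:assumptions on w for main theorems} with $\la\ge 0$) makes the error terms vanish as $\ve\to 0$. No other obstacle arises, and the proof is otherwise a direct application of the results already established.
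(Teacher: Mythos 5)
Your approach is the correct one, and it is the implicit argument the paper intends (no explicit proof is given in the paper for this theorem, which is simply presented as a rephrasing of Theorems~\ref{thm:cones} and~\ref{thm:weighted cones}). The first-variation computation is right, and the verification of the hypotheses on $f$ (continuity gives $L^\infty_{\mathrm{loc}}$) is correct.

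However, there is one small but genuine gap you should close. Theorems~\ref{thm:cones} and~\ref{thm:weighted cones} require $u \in C^1\left((\Si\cap\Om)\cup\Ga_0\cup\Ga_1\setminus\{0\}\right)$, i.e.\ $C^1$ regularity up to $\Ga_0$ as well, whereas the variational theorem only assumes $u \in C^1\left((\Si\cap\Om)\cup\Ga_1\setminus\{0\}\right)$. So you cannot quite say that the earlier theorems apply ``verbatim.'' The missing $C^1$ regularity up to $\Ga_0$ is not a hypothesis of the variational theorem and must be \emph{derived}: since $u\in W^{1,\infty}(\Si\cap\Om)$ is bounded, since $\Om$ is a Wulff ball and hence of class $C^2$ by the uniform ellipticity assumption on $H$, and since $w$ is positive and locally Lipschitz in $\Si$ (so in particular near $\Ga_0$, away from $\pa\Si$), the boundary regularity of Lieberman \cite[Theorem~1]{Li} gives $C^{1,\ga}$ regularity of $u$ up to $\Ga_0$ for some $\ga\in(0,1]$. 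This is exactly the point made in Remark~\ref{rem:on the regualrity}, and your proof should invoke it explicitly before applying Theorem~\ref{thm:cones} or Theorem~\ref{thm:weighted cones}.

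A minor stylistic remark: the effort you spend recovering the pointwise Neumann condition $\langle\na_\xi H(\na u),\nu\rangle=0$ on $\Ga_1\setminus\{0\}$ via integration by parts and cutoffs near the vertex is unnecessary. The notion of ``weak solution'' of \eqref{eq:GENERALPB weighted anisotropic problem in cones} used throughout the paper is precisely the integral identity against test functions vanishing on $\Ga_0$ (with the stated regularity), which is what the first variation hands you directly; the Neumann condition is encoded in this weak formulation and need not be extracted pointwise to invoke the earlier theorems.
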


We conclude this introduction
with a comment on the proofs of Theorems \ref{thm:cones}, \ref{thm:MAIN unweighted in Wulff ball}
and~\ref{thm:weighted cones}. As already mentioned, those theorems are obtained by putting together Theorem \ref{thm:MAIN GENERAL} and the characterization of the equality sign in the isoperimetric inequality \eqref{eq:weighted anisotropic isoperimetric in cones}.
When $\Si$ is a convex cone containing no lines, then the characterization of the minimizers of the isoperimetric inequality forces the superlevel sets to be Wulff balls all centered at $0$ (which is the vertex of $\Si$) intersected with $\Si$. This immediately gives the desired symmetry results. 
When $\Si$ contains lines instead,
%
%
the characterization of minimizers still informs us that all the superlevel sets are Wulff balls intersected with $\Si$, but a further step is needed in order to prove that all of those Wulff balls are centered at $0$. This step is accomplished in Lemma \ref{lem:conclusion in ball}. 
\bigskip

The rest of this paper is organized as follows.
Section~\ref{sec:preliminaries} recalls the basics of the Finsler
framework, placing the known results
into a setting convenient for the applications that we have in mind.

Other useful auxiliary results are collected in Section~\ref{INGREDIE},
which contains a suitable 
maximum principle in cones, a Stampacchia-type result,
some geometric differential inequalities and a 
Pohozaev-type identity.

In Section~\ref{sec:isoperimetric inequalities}
we deal with some isoperimetric inequalities, leveraging on
the existing literature~\cite{CGPRS, CR, CRS, FI, FM, FMP}
and adapting all the previous results
to the case under consideration in this paper.

The proofs of the main results in Theorems~\ref{thm:MAIN GENERAL},
\ref{thm:cones}, \ref{thm:MAIN unweighted in Wulff ball}
and \ref{thm:weighted cones} are then contained in Section~\ref{FIDIMO}.

\section{Preliminaries and setting}\label{sec:preliminaries}

Here we recall some basic facts on the Finsler framework.

Given the norm~$H$ satisfying~\eqref{Hcond1}, \eqref{Hcond2} and~\eqref{Hcond3} and the dual
norm~$H_0$ defined in~\eqref{hzero},
we observe that we can reconstruct $H$ in terms of $H_0$ as
\begin{equation}\label{eq:norma H def dual}
	H(\xi) = \sup_{ x \neq 0 } \frac{ \langle x, \xi \rangle }{H_0( x )} \quad \text{for} \quad \xi \in \RR^N .
\end{equation}
Consequently, it holds that
\begin{equation}\label{eq:Anisotropic_CS_dual}
	| \langle \xi , \eta \rangle | \le H(\xi) H_0 (\eta) \, , \quad \text{for any } \, \xi , \, \eta \in \RR^N .
\end{equation}

The two convex sets $B^{H_0}$ and $B^H$, defined in~\eqref{ball33} and~\eqref{ball1} respectively, are
both centrally symmetric and they are polar of each other.
We denote by $B_r^{H_0} (x)$ the ball centered at the point $x$ with radius $r$ in the
norm $H_0$, i.e.
$$
B_r^{H_0} (x) := \left\lbrace y \in \RR^N : H_0(x-y) < r \right\rbrace .
$$
Analogously, we define
$$
B_r^{H} (x) := \left\lbrace \xi \in \RR^N : H (x - \xi) < r \right\rbrace .
$$
The sets $B_r^{H_0} (x)$ are named {\it Wulff balls} of $H$ centered at $x$ with radius $r$, and they are homothetic copies of the unitary Wulff ball $B^{H_0}$. 
We call {\it Wulff shapes} of $H$ centered at $x$ with radius $r$ the boundaries $\pa B_r^{H_0} (x)$ of $B_r^{H_0} (x)$, i.e.,
$$
\pa B_r^{H_0} (x) := \left\lbrace y \in \RR^N : H_0(x-y) = r \right\rbrace .
$$
In what follows, to ease notation,
we will omit the exponent $H_0$, that is, the Wulff ball (of $H$) $B_r^{H_0} (x)$ centered at $x$ of radius $r$ will be denoted just by $B_r(x)$.

Furthermore, from the homogeneity property of~$H$ in~\eqref{Hcond3} we have
\begin{equation}\label{eq:proprietacheserveinconti}
	\langle \na_{\xi} H (\xi ), \xi \rangle = H(\xi) \, , \quad \text{for any } \, \xi \in \RR^N ,
\end{equation}
where the left-hand side is taken to be $0$ when $\xi = 0$.

Given a measurable set $E \subset \RR^N$, the anisotropic perimeter of $E$ in $\Si$ is defined by
$$
P_H ( E ; \Si ) : = \sup \left\lbrace \int_{ E } \dv \left( \si \right) \, dx \, : \, \si \in C_c^1 ( \Si ; \RR^N) , \, H_0 \left( \si  \right) \le 1   \right\rbrace .
$$
When $H$ is the Euclidean norm, we recover the usual Euclidean perimeter of $E$ in $\Si$
$$
P ( E ; \Si ) : = \sup \left\lbrace \int_{  E } \dv \left( \si  \right) \, dx \, : \, \si \in C_c^1 ( \Si ; \RR^N) , \,  \left| \si \right| \le 1   \right\rbrace .
$$

Notice that 
\begin{equation}\label{poiuytre}
{\mbox{$P_H ( E ; \Si )$ is finite if and only if $P ( E ; \Si )$ is finite.}}\end{equation}
Indeed, all norms in $\RR^N$ are equivalent. Thus, there exist two positive constants $k_1 \le k_2$ such that $H$ satisfies
$$
k_1 | \xi| \le H(\xi) \le k_2 | \xi| \quad \text{ for any } \, \xi \in \RR^N .
$$
This and the homogeneity of $H$ give that
$$
\frac{1}{k_2} | \xi| \le H_0 (\xi) \le \frac{1}{k_1} | \xi| \quad \text{ for any } \, \xi \in \RR^N ,
$$
and hence that
\begin{equation}\label{eq:relazione equivalenza perimetro anisotropo e isotropo}
k_1 P (E ;\Si) \le P_{H} ( E ;\Si) \le k_2 P ( E ; \Si),
\end{equation}
which gives~\eqref{poiuytre}.

%
%
If $E$ is a set of finite perimeter, then we can write that
$$
P ( E ; \Si ) = \cH^{N-1}( \Si \cap \pa^* E ) 
$$
and that
\begin{equation}\label{eq:def con reduced boundary di anisotropic perimeter}
P_H ( E ; \Si ) = \int_{\Si \cap \pa^* E} H( \nu ) \, d \cH^{N-1} ,
\end{equation}
where $\pa^* E$ is the reduced (or measure theoretic) boundary of $E$ on which an outer (measure theoretical) unit normal vector $\nu$ is defined (see \cite{Giusti} and, for the anisotropic setting, \cite{AB}).

Recalling the definition of the weighted anisotropic perimeter in~\eqref{eq:defweighted perimetro smooth}
and of the weighted volume of a set in~\eqref{WEIVOL},
we notice that, if~$w$ is positively homogeneous of degree~$\lambda\ge0$
and~$D$ is the quantity defined in~\eqref{Ddef},
the following\footnote{We stress
that the right-hand side of~\eqref{eq:relazione volume perimetro}
means ``$D$ multiplied by $w (\Si \cap B)$'' (recall also the notation in~\eqref{WEIVOL});
in particular, no confusion should arise with the derivative of~$w$.}
relation
holds true
\begin{equation}\label{eq:relazione volume perimetro}
P_{w, H} (B ; \Si) =  D\;w (\Si \cap B)  ,
\end{equation}
where $B$ is the unitary Wulff ball,
see~\cite[Formula (1.14)]{CRS}.

For more details on $P_{w,H} ( E ; \Si ) $ and for more general definitions, we refer the reader to \cite{BBF} and \cite{CRS} (see also \cite{CGPRS}).

\section{Additonal ingredients for the proof of the main results}\label{INGREDIE}

We collect here
some other auxiliary results that turn out to be handy for the
proof of the main theorems, such as
a maximum principle in cones, a Stampacchia-type result,
some geometric differential inequalities and a 
Pohozaev-type identity.

We start by proving the following maximum principle in cones.

\begin{lem}[Maximum principle in cones]
	\label{lem:maximum principle in cones}
	Let $u$ be a (weak) solution of
	\begin{equation*}
		\begin{cases}
			-\dv\left\{ w(x) \, H( \na u )^{p-1}\, \na_{ \xi} H( \na u) \right\}= \widetilde{f}(x) \, w(x) \ & \mbox{ in } \ \Si \cap \Om, 
			\\
			u=0 \ & \mbox{ on } \ \Ga_0 ,
			\\
			\langle \na_\xi H(\na u) , \nu \rangle = 0 \ & \mbox{ on } \ \Ga_1 
			\setminus \left\lbrace 0 \right\rbrace ,
		\end{cases}
	\end{equation*}
	where the weight $w$ is continuous in $\ol{\Si}$ and positive and locally Lipschitz in $\Si$.
	If $\widetilde{f} \in L^\infty(\Si \cap \Om)$
	is nonnegative, then $u$ is nonnegative. In particular, if $\widetilde{f} \equiv 0$, then $u \equiv 0$. 
\end{lem}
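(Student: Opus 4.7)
The plan is to use the classical test-function argument, adapted to the Finsler setting through the homogeneity identity \eqref{eq:proprietacheserveinconti}. First I would set $u^- := \min(u,0) \le 0$ and take $u^-$ as a test function in the weak formulation of the equation. Since $u = 0$ on $\Ga_0$, also $u^-$ vanishes on $\Ga_0$ in the trace sense, so $u^-$ is admissible; moreover the Neumann-type condition on $\Ga_1 \setminus \{0\}$ is naturally encoded in the weak formulation and requires no vanishing of the test function there.

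Using $\nabla u^- = \nabla u \cdot \chi_{\{u<0\}}$ and the Euler-type identity $\langle \na_\xi H(\na u), \na u \rangle = H(\na u)$ from \eqref{eq:proprietacheserveinconti}, the weak formulation tested against $u^-$ reads
\begin{equation*}
\int_{\{u<0\}} w(x)\, H(\na u)^{p} \, dx \;=\; \int_{\Si \cap \Om} \widetilde{f}(x)\, w(x)\, u^- \, dx \;\le\; 0,
\end{equation*}
since $\widetilde{f}, w \ge 0$ and $u^- \le 0$. Because $w$ is positive in $\Si$ and $H$ satisfies \eqref{Hcond2}, this forces $\na u = 0$ a.e. on $\{u<0\}$, hence $\na u^- = 0$ a.e. on the whole of $\Si \cap \Om$. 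As $\Si \cap \Om$ is convex (being the intersection of the convex cone $\Si$ with a Wulff ball) and therefore connected, $u^-$ is constant there; combined with its vanishing trace on $\Ga_0$, we conclude $u^- \equiv 0$, that is, $u \ge 0$ in $\Si \cap \Om$.

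For the second statement ($\widetilde{f} \equiv 0$), I would instead plug in $\varphi = u$ itself as a test function. The same identity \eqref{eq:proprietacheserveinconti} then gives
\begin{equation*}
\int_{\Si \cap \Om} w(x)\, H(\na u)^{p} \, dx \;=\; 0,
\end{equation*}
whence $\na u = 0$ a.e.\ in $\Si \cap \Om$ by positivity of $w$, so $u$ is constant on the connected set $\Si \cap \Om$, and the boundary condition on $\Ga_0$ pins this constant to zero.

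The only delicate point is bookkeeping rather than real mathematics: one needs to know that $u^-$ (resp.\ $u$) genuinely lies in the correct weighted Sobolev space in which the weak formulation makes sense, and that its trace on $\Ga_0$ is zero. In this mixed boundary setting this is standard once the local Lipschitz and positivity of $w$ on $\Si$ are assumed (as in the hypotheses), since on compact subsets of $\Si$ the weight is bounded between two positive constants and one can argue exactly as in the classical unweighted case; no difficulty arises from $\Ga_1 \setminus \{0\}$ since the Neumann condition is a natural one.
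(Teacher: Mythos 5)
Your argument is correct and is essentially the paper's proof: test the weak formulation against the negative part of $u$, use the homogeneity identity \eqref{eq:proprietacheserveinconti} to get $\int_{\{u<0\}} w\, H(\na u)^{p}\,d\cH^N \le 0$, and conclude $\na u^- = 0$ a.e.; the paper then invokes a Poincar\'e inequality on $\Si\cap\Om$ (legitimate since $u^-$ vanishes on $\Ga_0$ and $\cH^{N-1}(\Ga_0)>0$), whereas you use connectedness of $\Si\cap\Om$ to see that $u^-$ is constant and then the trace condition to pin the constant to zero. One small slip in your write-up: the lemma itself does \emph{not} assume $\Om$ to be a Wulff ball (that hypothesis first appears in Theorem~\ref{thm:MAIN GENERAL}), so the parenthetical justification that $\Si\cap\Om$ is convex is not available at this level of generality, although connectedness of $\Si\cap\Om$ touching $\Ga_0$ (or the Poincar\'e inequality used in the paper) is indeed what one needs.
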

\begin{proof}
	We set $u^- := \max \left\lbrace 0, -u  \right\rbrace$ and we compute
	\begin{equation*}
		\begin{split}
			0 \le \int_{ \Si \cap \Om }   H^p (\na u^-) \, w(x) \, d \cH^{N} 
			& = \int_{ \left\lbrace u<0 \right\rbrace }  H^p (\na u) \, w(x) \, d \cH^{N}
			\\
			& = \int_{ \left\lbrace u<0 \right\rbrace }   H( \na u )^{p-1}\, \langle \na_{ \xi} H( \na u) , \na u \rangle \, w(x) \, d \cH^{N}
			\\
			& = \int_{ \left\lbrace u<0 \right\rbrace }  u \, \widetilde{f} \, w(x) \,  d \cH^{N} \le 0 ,
		\end{split}
	\end{equation*}
	where in the second equality we used \eqref{eq:proprietacheserveinconti} and in the third equality we used the boundary value problem.
	Thus, $\na u^- = 0$ a.e. in~$\Si \cap \Om$.
	Moreover, by Poincar\'e inequality (which holds true since $u^- = 0$ on $\Ga_0$ and $\cH^{N-1}(\Ga_0)>0$) we find that $u^-=0$, and hence $u \ge 0$ a.e. in $\Si \cap \Om$. 
\end{proof}

In what follows, we will exploit the following result regarding the singular set of solutions to the anisotropic weighted equation considered in the present paper. In the isotropic unweighted setting, the result is due to Lou (see \cite[Theorem 1.1 and Corollary 1.1]{Lo}). We mention that the results in~\cite{Lo} hold true for more general nonlinearities, namely $f \in L^q (\Om)$ with $q>p/N$ and $q\ge 2$. 

\begin{lem}\label{lem:Lou generalization}
	Let $E \subset \RR^N$ be a bounded domain. Let~$f \in L^\infty_{\mathrm{loc}}(\RR)$ and $w \in C(\ol{E})$, with $w$ positive and locally Lipschitz in $E$ .
	Let $u$ be a bounded weak solution of
	\begin{equation}\label{eq:EQUAZIONE CON PESO PER LOU}
		-\dv\left\{ w(x) \, H( \na u )^{p-1}\, \na_{ \xi} H( \na u) \right\}= f(u) \, w(x) \ \mbox{ in } \ E ,
	\end{equation}
	that is, 
	$u \in W^{1,p} (E) \cap L^\infty (E)$ and
	\begin{equation}\label{eq:weak solution}
		\int_E  H( \na u )^{p-1}\, \langle \na_{ \xi} H( \na u) , \na \phi \rangle \, w(x) \, d \cH^{N} = \int_E f \, \phi \, w(x) d \cH^N \quad \text{ for any } \phi \in C_c^{\infty} (E).
	\end{equation}
	Then, $f=0$ a.e. in $\left\lbrace x\in E \, : \, \na u(x) =0 \right\rbrace $.
\end{lem}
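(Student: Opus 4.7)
The plan is to mimic the strategy of Lou in the isotropic unweighted case, adapted to the anisotropic weighted setting, by recasting the equation in terms of the flux vector field
$$A(x) := H(\nabla u(x))^{p-1}\,\nabla_\xi H(\nabla u(x)),$$
which, by the uniform ellipticity of $H$ and the homogeneity relation $\langle \nabla_\xi H(\xi),\xi\rangle = H(\xi)$, vanishes exactly on $\{\nabla u = 0\}$. Rewriting \eqref{eq:EQUAZIONE CON PESO PER LOU} in this notation, the weak formulation \eqref{eq:weak solution} says precisely that $-\operatorname{div}(wA) = f(u)w$ in $\mathcal{D}'(E)$.

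The first substantive step is to establish enhanced Sobolev regularity for $A$, namely $A \in W^{1,2}_{\mathrm{loc}}(E;\RR^N)$. In the isotropic unweighted case this is the classical regularity statement of DiBenedetto/Tolksdorf used by Lou. In our setting I would obtain it by the standard difference-quotient/Caccioppoli machinery, exploiting that $H \in C^2(\RR^N\setminus\{0\})$ is uniformly elliptic (so the linearization of the operator has bounded and uniformly elliptic coefficients on the set $\{\nabla u \neq 0\}$), that $u \in W^{1,p}\cap L^\infty$, and that $w$ is locally Lipschitz and locally bounded away from $0$ (so that multiplication/division by $w$ does not spoil the estimates on compact subsets of $E$). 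Concretely, dividing the equation by $w$ yields an equation of the form $-\operatorname{div}(A) = f(u) + \langle \nabla\log w, A\rangle$ to which the classical theory applies.

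The second step applies the Stampacchia-type vanishing principle announced in Section~\ref{INGREDIE} to each scalar component $A_i \in W^{1,1}_{\mathrm{loc}}(E)$: this yields $\nabla A_i = 0$ a.e.\ on $\{A_i = 0\}$, and hence $\nabla A = 0$ a.e.\ on $\{A=0\} = \{\nabla u = 0\}$. Since $w$ is locally Lipschitz, we have the pointwise a.e.\ identity $\operatorname{div}(wA) = \langle \nabla w, A\rangle + w\,\operatorname{div}(A)$. On $\{\nabla u = 0\}$ both $A$ and $\operatorname{div}(A) = \operatorname{tr}(\nabla A)$ vanish a.e., so $\operatorname{div}(wA) = 0$ a.e.\ there. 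Combining with the equation gives $f(u)\,w = 0$ a.e.\ on $\{\nabla u = 0\}$, and since $w > 0$ in $E$ we conclude $f(u) = 0$ a.e.\ on $\{\nabla u = 0\}$.

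The principal obstacle is the Sobolev regularity step. For $p\ge 2$ it is essentially classical and transfers to the anisotropic weighted framework with minor book-keeping; the delicate case is $1<p<2$, where the operator degenerates at the critical set and one needs the more refined difference-quotient arguments (as in Tolksdorf or in the anisotropic extensions in the literature cited by the authors) to handle the nonlinearity $H(\nabla u)^{p-1}\nabla_\xi H(\nabla u)$ together with the variable weight $w$. Once the regularity of $A$ is secured, the remainder of the argument is short and purely a consequence of Stampacchia's lemma and the positivity of $w$.
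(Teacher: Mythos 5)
Your proposal is correct and, after the shared first step, takes a genuinely different route to the conclusion than the paper does. Both arguments hinge on establishing $a(\nabla u):= H(\nabla u)^{p-1}\nabla_\xi H(\nabla u)\in W^{1,2}_{\mathrm{loc}}(E)$, and both obtain it in the same way: dividing the weighted equation by $w$ to get $-\dv(a(\nabla u))=\widetilde f$ with $\widetilde f\in L^\infty_{\mathrm{loc}}$ (using that $w$ is positive and locally Lipschitz and $u$ is locally $C^{1,\gamma}$), then invoking the known second-order regularity for the unweighted equation --- the paper cites Lou's Lemma~2.1 (isotropic) and the forthcoming~\cite{ACF} (anisotropic), while you propose to redo the difference-quotient argument; either is acceptable. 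Where the two proofs diverge is the second step. The paper plugs the truncated test function $\phi=\frac{|a(\nabla u)|}{\varepsilon+|a(\nabla u)|}\,\varphi$ into the weak formulation, dominates the error term via Cauchy--Schwarz and $|\nabla|a(\nabla u)||\in L^2_{\mathrm{loc}}$, and lets $\varepsilon\to 0^+$ by dominated convergence to conclude $\int_E \varphi f w = \int_{E\setminus\{\nabla u=0\}}\varphi f w$. You instead observe that once $a(\nabla u)\in W^{1,1}_{\mathrm{loc}}$, the classical Stampacchia lemma applied componentwise gives $\nabla\big(a(\nabla u)\big)=0$ a.e.\ on $\{a(\nabla u)=0\}=\{\nabla u=0\}$, hence (since $w\,a(\nabla u)\in W^{1,1}_{\mathrm{loc}}$ and the product rule holds) $\dv\big(w\,a(\nabla u)\big)=\langle\nabla w,a(\nabla u)\rangle+w\,\dv\,a(\nabla u)=0$ a.e.\ on that set, and reading the weak equation as a pointwise a.e.\ identity for the Sobolev vector field $w\,a(\nabla u)$ gives $f(u)\,w=0$ a.e.\ there. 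Your version is arguably more transparent, while the paper's test-function argument avoids making the Stampacchia step explicit; both are sound and rest on exactly the same regularity input. One small caveat: when you refer to ``the Stampacchia-type result announced in Section~\ref{INGREDIE}'', note that the item the paper is advertising there is in fact this very Lemma~\ref{lem:Lou generalization}, so there is no separate Stampacchia lemma in the paper to appeal to --- but the classical Stampacchia lemma for $W^{1,1}_{\mathrm{loc}}$ functions is standard and does not need any support from this paper, so your argument is not circular.
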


We remark that we are not making any
assumptions on the regularity of $E$ in Lemma~\ref{lem:Lou generalization}.
As a matter of fact, in what follows we are going to apply this result with $E:= \Si \cap \Om$.

\begin{proof}[Proof of Lemma~\ref{lem:Lou generalization}]
	We start by proving that
	\begin{equation}\label{eq:second order regularity}
		a(\na u) :=  H( \na u )^{p-1}\, \na_{ \xi} H( \na u) \in W^{1,2}_\mathrm{loc} (E) .
	\end{equation}
		To prove this, we first observe that
		\begin{equation}\label{firstyrwtguy876976}
		{\mbox{if $\widetilde{f} \in L^\infty_{\mathrm{loc}} (E)$, then a solution of
		$
		- \dv \left( a ( \na u ) \right) = \widetilde{f}$ in~$ E$ 
		satisfies \eqref{eq:second order regularity}.}}		\end{equation}
		Indeed, when $H$ is the Euclidean norm, this result can be found in \cite[Lemma 2.1]{Lo}. 
		The anisotropic version of this result (for uniformly elliptic norms $H \in C^2 (\RR^N \setminus \left\lbrace 0 \right\rbrace)$) will appear in~\cite{ACF}.
				
		Now, in order to prove \eqref{eq:second order regularity}, we notice that \eqref{eq:EQUAZIONE CON PESO PER LOU} can be rewritten as
		$$
		-\dv\left\{  \, H( \na u )^{p-1}\, \na_{ \xi} H( \na u) \right\} =  f(u) + H( \na u )^{p-1}\, \left\langle \frac{\na w}{w}, \, \na_{ \xi} H( \na u)  \right\rangle   \ \mbox{ in } \ E .
		$$
		Now we set 
		$$
		\widetilde{f}(x):= f \left(u \left(x \right)\right) + H \left( \na u(x) \right)^{p-1}\, \left\langle \frac{\na w(x)}{w(x)}, \, \na_{ \xi} H \left( \na u \left( x \right) \right) \right\rangle
		$$
		and we observe that, in light of our assumptions on $f$ and $w$, and the interior $C^{1,\ga}$-regularity of $u$ (which holds true by \cite{To}), $ \widetilde{f} \in L^\infty_{\mathrm{loc}} (E) $. 
		Here, we used that $u$ is bounded in order to apply \cite{To}. We also used that $\frac{H(\xi)^p}{p} \in C^1(\RR^N)$ (see e.g., \cite[Lemma 2.2]{CFV}). Thus, \eqref{eq:second order regularity} follows
		from~\eqref{firstyrwtguy876976}.
	
Now, for any $\varphi \in C_c^\infty (E)$ we set
	$$
	\phi(x) := \frac{ \left|  a( \na u(x) ) \right| }{\ve + \left|  a( \na u (x)) \right| } \,  \varphi (x) 
	$$	
	and we notice that $\phi \in W^{1,2}_0 (E)$.
	By using $\phi$ as a test function in \eqref{eq:weak solution}, we obtain that
	\begin{equation}\label{eq:parte111111111}
		\begin{split}
			\int_E  \frac{ \left|  a( \na u) \right| }{\ve + \left|  a( \na u) \right| } \,  \varphi (x)  \, f \,  w(x) \, d \cH^N 
			& =	
			\int_E \frac{ \left|  a( \na u ) \right| }{\ve + \left|  a( \na u ) \right| }  \, \langle  a( \na u ) , \na \varphi \rangle \,  w(x) \, d \cH^{N} 
			\\
			&\qquad+
			\ve \int_E \frac{  \langle  a( \na u ) , \na \left( |  a( \na u ) | \right) \rangle  }{ \left( \ve + \left|  a( \na u ) \right| \right)^2 } \,  \varphi (x)  \,  w(x) \, d \cH^{N}  .
		\end{split}
	\end{equation}
	By the Cauchy-Schwartz inequality we have that 
	$$
	\ve \frac{  \langle  a( \na u ) , \na \left( \left|  a( \na u ) \right| \right) \rangle  }{ \left( \ve + \left|  a( \na u ) \right| \right)^2 }
	\le \left|  \na \left( |   a(\na u) | \right)  \right| ,
	$$
	and the last term is in $L^1 (E)$ (actually in $L^2(E)$) and does not depend on $\ve$. 
	Notice also that
	\begin{equation}\label{eq:parte222222222}
		\int_{ E \setminus \left\lbrace \na u =0 \right\rbrace  } \frac{ \left|  a( \na u ) \right| }{\ve + \left|  a( \na u ) \right| } \,  \varphi (x)  \, f \, d \cH^N 
		=
		\int_E \frac{ \left|  a( \na u ) \right| }{\ve + \left|  a( \na u ) \right| } \,  \varphi (x)  \, f \, d \cH^N .
	\end{equation}
	Thus, by letting $\ve \to 0^+$, \eqref{eq:parte111111111}, \eqref{eq:parte222222222} and Lebesgue's dominated convergence theorem give that
	$$
	\int_{E \setminus \left\lbrace \na u =0 \right\rbrace }   \varphi (x)  \, f \, w(x) d \cH^N 
	=
	\int_{ E   }  \langle   a( \na u ) , \na \varphi \rangle \, w(x)  \, d \cH^{N} ,
	$$
	for any $\varphi \in C_c^\infty(E)$. Hence, this and~\eqref{eq:weak solution} lead to
	$$
	\int_E   \varphi (x)  \, f \,  w(x)  \, d \cH^N = \int_{E \setminus \left\lbrace \na u =0 \right\rbrace }   \varphi (x)  \, f \,  w(x)  \, d \cH^N \quad \text{ for any } \varphi \in C_c^\infty(E),
	$$
	and the desired result follows.
\end{proof}

In what follows, for $t \in \left(- \infty , M \right]$ (where $M$ is that defined in \eqref{eq:def M in Main thm}), we consider the sets
$$u^{-1}(t):= \left\lbrace u=t \right\rbrace:= \left\lbrace x \in \Si \cap \Om: u(x) = t \right\rbrace 
\quad \text{ and } \quad
\left\lbrace u > t \right\rbrace := \left\lbrace x \in \Si \cap \Om : u(x) > t \right\rbrace ,
$$
which are contained in $\Si \cap \Om$ by definition.

Also, we consider the distribution-type functions:
\begin{equation}\label{eq:def weighted I J}
	I(t) := \int_{ \left\lbrace u > t \right\rbrace } f(u) w(x) \, d\cH^{N} , \qquad \mu(t):= w( \left\lbrace u > t \right\rbrace ) ,
\end{equation}
and
\begin{equation}\label{eq:def weighted K}
	K:= I^\al \mu^\be ,
\end{equation}
for any~$\alpha$, $\beta\in\RR$.

In the sequel, we will specify our choice of parameters~$\alpha$ and~$\beta$
as follows
\begin{equation}\label{eq:def weighted albe}
	\al:=p'= \frac{p}{p-1}  \quad {\mbox{ and }}\quad \be:= \frac{p-D}{D(p-1)} ,
\end{equation}
where~$D$ is the quantity defined in~\eqref{Ddef}.

We notice that, when $w \equiv 1$ (and hence 
$D=N$), these distribution-type functions are
precisely those used in \cite{Se} in the isotropic unweighted case (when $ \Si = \RR^N$).

\begin{lem}\label{lem:weighted con4items}
	Let $\al$, $\be$ be arbitrary real numbers and $\Om \subset \RR^N$ be a bounded smooth domain.
	Assume that~$u \in  C^{0,1}(\ol{\Si \cap \Om}) \cap C^1(\Si \cap \Om)$ is nonnegative, $u=0$ on $\Ga_0$, and let $w$ be as in Lemma \ref{lem:maximum principle in cones}.
	Let $f \in L^{\infty}_{\mathrm{loc}}(\left[0, \infty \right))$ be nonnegative
	and let $I$, $\mu$, $K$ be those defined in \eqref{eq:def weighted I J} and \eqref{eq:def weighted K}. Then:
	\begin{enumerate}[(i)]
		\item The functions $I$, $\mu$ and $K$ are a.e. differentiable and
		\begin{equation}\label{eq:weighted derivative K}
			- K'(t)= \left[  \al I(t)^{\al -1} \mu(t)^{\be} f(t) + \be I(t)^\al \mu(t)^{\be -1} \right] \left( - \mu'(t) \right) \quad \text{for a.e. } t.
		\end{equation}
		
		\item For a.e. $t \in (0,M)$, it holds that
		\begin{equation}\label{eq:weighted inequalitymuprimo}
			- \mu ' (t) \ge \int_{ \left\lbrace u=t \right\rbrace } \frac{w}{ | \na u |} \, d \cH^{N-1}.
		\end{equation} 
	\end{enumerate}
	Assume furthermore that $u$ is a weak solution of \eqref{eq:GENERALPB weighted anisotropic problem in cones}. Then:
	\begin{enumerate}[(i)]
	\setcounter{enumi}{2}	
		\item  If hypothesis \eqref{eq:GENERAL condizione b} in Theorem \ref{thm:MAIN GENERAL} holds,
		then~$I$, $\mu$ and $K$ are absolutely continuous functions for $t < M$.
		
		\item Suppose in addition that~$\alpha$ and~$\beta$ are as in~\eqref{eq:def weighted albe}.
		Under the assumptions \eqref{eq:GENERAL condizione a} and \eqref{eq:GENERAL condizione b} in Theorem \ref{thm:MAIN GENERAL}, $K(t)$ defined in \eqref{eq:def weighted K} is nonincreasing for $t \in (0,M)$.
	\end{enumerate}
\end{lem}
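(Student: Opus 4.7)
My plan is to dispatch (i) and (ii) by the coarea formula and classical monotone-function theory, establish (iii) with the help of Lemma \ref{lem:Lou generalization} to rule out plateaus of $u$, and reduce (iv) to a clean algebraic inequality.

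For (i), I note that $\mu$ and $I$ are nonincreasing (since $f,w\ge 0$), hence a.e.\ differentiable. Writing, for $t_1<t_2$,
$$
I(t_1)-I(t_2)=\int_{\{t_1<u\le t_2\}}f(u)\,w\,d\cH^N
\quad\text{and}\quad
\mu(t_1)-\mu(t_2)=\int_{\{t_1<u\le t_2\}}w\,d\cH^N,
$$
and letting $t_2\downarrow t_1$ at any $t_1$ for which $f$ is a Lebesgue point and $|\{u=t_1\}|=0$ (true for a.e.\ $t_1$), one obtains $I'(t)=f(t)\mu'(t)$ a.e.; the product rule applied to $K=I^\alpha\mu^\beta$ then yields \eqref{eq:weighted derivative K}. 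For (ii), I would apply the coarea formula with density $w/|\na u|$ on the regular set $\{|\na u|>0\}$ to get
$$
w\bigl(\{u>t\}\cap\{|\na u|>0\}\bigr)=\int_t^M\int_{\{u=s\}}\frac{w}{|\na u|}\,d\cH^{N-1}\,ds,
$$
an absolutely continuous function of $t$; the remainder $w(\{u>t\}\cap\{|\na u|=0\})$ is monotone nonincreasing, so differentiating the sum gives \eqref{eq:weighted inequalitymuprimo}.

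For (iii), under \eqref{eq:GENERAL condizione b} I would apply Lemma \ref{lem:Lou generalization} on $E=\Si\cap\Om$ to conclude that $f(u)=0$ a.e.\ on the critical set $\{\na u=0\}$; combined with $\phi\le f$ this forces $\phi(u)=0$ there. Since $u\in C^1$, every point of positive density of a level set $\{u=t\}$ lies in $\{\na u=0\}$, so any $t$ with $|\{u=t\}|>0$ must satisfy $\phi(t)=0$. By monotonicity this means $\phi$ vanishes only on a terminal interval $[s_0,M]$, so $\mu$ has no jumps on $(-\infty,s_0)$ and, moreover, its ``singular'' piece $w(\{u>t\}\cap\{|\na u|=0\})$ is actually constant there; combined with the coarea decomposition of (ii) this gives $\mu$ absolutely continuous on $(-\infty,s_0)$. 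On $[s_0,M)$, instead, the equation becomes homogeneous and a strong maximum principle argument applied to $M-u$ reduces $u$ to a constant on each connected component where $u>s_0$, so $\mu$ is piecewise constant there and trivially AC on $(s_0,M)$. Transferring AC to $I$ and $K$ is then straightforward from $I'=f(t)\mu'(t)$ and the local boundedness of $f$. I expect this to be the most delicate step.

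For (iv), inserting \eqref{eq:def weighted albe} into \eqref{eq:weighted derivative K} produces the factorization
$$
-K'(t)=(-\mu'(t))\,I(t)^{\alpha-1}\mu(t)^{\beta-1}\cdot\frac{1}{D(p-1)}\bigl[\,Dp\,f(t)\,\mu(t)+(p-D)\,I(t)\,\bigr],
$$
which is legitimate for $t<M$ since $\mu(t)>0$ there. The prefactor is nonnegative, so it remains to show $Dp\,f(t)\mu(t)+(p-D)I(t)\ge 0$. Under \eqref{eq:GENERAL condizione a}, $p-D\ge 0$ and all terms are nonnegative. Under \eqref{eq:GENERAL condizione b}, monotonicity of $\phi$ gives $\phi(u(x))\le\phi(t)$ on $\{u>t\}$, so the chain of bounds
$$
I(t)\le\frac{Dp}{D-p}\int_{\{u>t\}}\phi(u)\,w\,d\cH^N\le\frac{Dp}{D-p}\,\phi(t)\,\mu(t)\le\frac{Dp}{D-p}\,f(t)\,\mu(t)
$$
rearranges to $(D-p)I(t)\le Dp\,f(t)\mu(t)$, completing the argument.
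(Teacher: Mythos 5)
Your proposal tracks the paper's proof quite closely in spirit, and parts~(ii) and~(iii) match the paper's route (coarea decomposition, Lou's lemma to rule out a fat critical set below the top level), though~(iii) is organized more circuitously: the paper instead shows directly, via the maximum principle of Lemma~\ref{lem:maximum principle in cones}, that the threshold $t_0$ where $\phi$ starts vanishing must equal~$M$, so no separate terminal interval needs to be treated. Two points deserve attention.

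In part~(i), the statement ``letting $t_2\downarrow t_1$ at any $t_1$ for which $f$ is a Lebesgue point'' is imprecise: the quantity you need to control is the average of~$f(u)$ over the shell $\{t_1<u\le t_2\}$ weighted by $w\,d\cH^N$, which is the average of~$f$ against the Lebesgue--Stieltjes measure~$d\mu$, not against Lebesgue measure on the line. A Lebesgue point of~$f$ with respect to~$dt$ does not directly give convergence of that $d\mu$-average. The paper resolves this cleanly: it writes $I(t)=-\int_t^{M^+} f\,d\mu$, hence $dI=f\,d\mu$ and $I'=f\mu'$ for $d\mu$-a.e.~$t$; then it observes that $\mu$ is strictly decreasing (because $w>0$ in~$\Si$ and $|\na u|$ is bounded), so Lebesgue measure is absolutely continuous with respect to~$d\mu$, which upgrades ``$d\mu$-a.e.''\ to ``a.e.''\ You should either pass through this Radon--Nikodym argument or reformulate the Lebesgue-point condition with respect to~$d\mu$.

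The more substantive gap is in part~(iv) under hypothesis~\eqref{eq:GENERAL condizione a}. You reduce to the algebraic inequality $Dp\,f(t)\mu(t)+(p-D)I(t)\ge0$ and check it in both cases, concluding $-K'\ge0$ a.e. Under~\eqref{eq:GENERAL condizione b} this does finish the argument, because part~(iii) gives absolute continuity of~$K$. But part~(iii) is \emph{only} available under~\eqref{eq:GENERAL condizione b}; under~\eqref{eq:GENERAL condizione a} you have not established that~$K$ is absolutely continuous, and an a.e.\ nonnegative derivative of a merely a.e.-differentiable function does not imply monotonicity (the Cantor function is the standard obstruction). The paper sidesteps this entirely under~\eqref{eq:GENERAL condizione a}: since $p\ge D$ forces $\al=p/(p-1)>0$ and $\be=(p-D)/(D(p-1))\ge0$, the function $K=I^\al\mu^\be$ is a product of two nonnegative nonincreasing functions raised to nonnegative powers and is therefore nonincreasing with no differentiation needed. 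You need to add this structural observation to close the case~\eqref{eq:GENERAL condizione a}.
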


\begin{proof}
	We take inspiration from the arguments used in \cite[Lemma 4]{Se} in the isotropic unweighted case~$\Si= \RR^N$. More details and related questions can be found, e.g., in \cite{BZ, Tal, Fe}.
	
	{\emph{(i)}} Since $I$ and $\mu$ are nonincreasing (by definition), they are a.e. differentiable. Also, they define nonpositive Lebesgue-Stieltjes measures $d I$ and $d \mu$ on $(0,M)$. By definition of Lebesgue integral, we find that
$$
I(t) = \int_{ \left\lbrace u > t \right\rbrace } f(u) w(x) \, d\cH^{N} = - \int_t^{M^+} f( \tau ) \, d \mu( \tau ) ,
$$	
and hence $dI = f \, d \mu$.
Thus, $I'(t) = f(t) \mu '(t)$ for $d \mu$-a.e. $t$. Now, since $w>0$ in $\Si$ and $| \na u|$ is bounded in $ \left\lbrace u \ge t \right\rbrace$, $\mu$ is strictly decreasing. As a consequence, we have that the Lebesgue measure on $(0,M)$ is absolutely continuous with respect to $d \mu$. Thus, we get that
$$
I'(t) = f(t) \mu '(t) \quad \text{ for a.e. } t ,
$$
and \eqref{eq:weighted derivative K} easily follows.
In all the paper, unless otherwise indicated, a.e. is always with respect to the Lebesgue measure.
\medskip

	{\emph{(ii)}} We define
	$$
	\mu_0 (t):= w \big( \left\lbrace u>t, \, |\na u| >0 \right\rbrace \big).
	$$
	Let $\ve>0$ and we use coarea formula to compute
	\begin{equation}\label{eq:coarea weighted}
		\begin{split}
			w \big( \left\lbrace u>t, \, |\na u| > \ve \right\rbrace \big) 
			& = 
			\int_{\Si \cap \Om} |\na u| \frac{\chi_{ \left\lbrace u>t, \, |\na u| > \ve \right\rbrace } \, w(x) }{ |\na u|}  \, d\cH^{N}
			\\
			& = \int_t^M \int_{u^{-1}(\tau)} \frac{\chi_{ \left\lbrace u>t, \, |\na u| > \ve \right\rbrace } \, w(x) }{ |\na u|} \, d\cH^{N-1} \, d \tau
			\\
			& = \int_t^M \int_{ u^{-1}(\tau) \cap \left\lbrace |\na u| > \ve \right\rbrace } \frac{w(x)}{ |\na u|} \, d\cH^{N-1} \, d \tau .
		\end{split}
	\end{equation}
	
	By monotone convergence, letting $\ve \to 0$, we find that~\eqref{eq:coarea weighted} still holds true for $\ve = 0$ (and arbitrary $t$). Thus, $\mu_0(t)$ is absolutely continuous and
	\begin{equation}\label{eq:mu primo 0 weighted}
		- \mu_0'(t)= \int_{ u^{-1}( t ) \cap \left\lbrace |\na u| > 0 \right\rbrace } \frac{w(x)}{ |\na u|} \, d\cH^{N-1} \, d \tau , \quad \text{ for a.e. } t \in (0,M).
	\end{equation}
	
	Again by using coarea formula we also find
	$$
	0= \int_{\Si \cap \Om} |\na u| \,  \chi_{ \left\lbrace u>t, \, |\na u| = 0 \right\rbrace } \, d \cH^{N} = \int_t^M \cH^{N-1} \left( u^{-1} ( \tau ) \cap \left\lbrace |\na u| = 0 \right\rbrace \right) \, d \tau ,
	$$
	and hence 
\begin{equation}\label{eq:singular set on level sets has zero N-1-measure}
\cH^{N-1} \left( u^{-1} ( t ) \cap \left\lbrace |\na u| = 0 \right\rbrace \right) = 0 
\quad \text{for a.e. } \, t .
\end{equation}
	
	Thus, we can replace \eqref{eq:mu primo 0 weighted} with
\begin{equation}\label{KMSFBDOMSNAUF}
	- \mu_0'(t)= \int_{ u^{-1}( t ) } \frac{w(x)}{ |\na u|} \, d\cH^{N-1} \, d \tau , \quad \text{ for a.e. } t \in (0,M).
	\end{equation}
Also, for a.e. $t \in (0,M) $ (where both $\mu'(t)$ and $\mu_0' (t)$ exist) we have that
	$$
	- \mu'(t) = \lim_{s \to t^+} \frac{ w \left( s \ge u >t \right)  }{ s - t } \ge \lim_{s \to t^+} \frac{ w \left( s \ge u >t , \, | \na u| > 0 \right)  }{ s - t } = - \mu_0'(t) .
	$$
	Inequality \eqref{eq:weighted inequalitymuprimo} now
	follows from the latter equation and~\eqref{KMSFBDOMSNAUF}. We notice additionally
	that the equality sign holds for a.e. $t$ if $\left\lbrace | \na u | = 0 \right\rbrace$ has zero $\cH^{N}$-measure.
	\medskip
	
		{\emph{(iii)}} If $p<D$ and $\phi \le f \le \frac{D p}{D-p} \phi $ for some
		nonincreasing function~$\phi \ge 0$, then a solution
	of \eqref{eq:GENERALPB weighted anisotropic problem in cones} satisfies 
	$$-\dv\left\{  H( \na u )^{p-1}\, \na_{ \xi} H( \na u) \right\}=0 \quad \text{in } \, \left\lbrace u \ge t_0 \right\rbrace ,$$
	where $t_0 \in \left[ 0, \infty \right]$ satisfies that $\phi (t)>0$ for $t<0$ and $\phi (t) \equiv 0$ for $t >t_0$. Hence, by Lemma \ref{lem:maximum principle in cones}, if $t_0 < +\infty$, we will have that
	$u \equiv t_0 = M$ in $\left\lbrace u \ge t_0 \right\rbrace$. Therefore, for every $t<M$ it holds
	that 
	$$-\dv\left\{  H( \na u )^{p-1}\, \na_{ \xi} H( \na u) \right\}=f(u) \ge \phi(u) \ge \phi(t) >0 \quad \text{in } \, \left\lbrace 0 \le u < t \right\rbrace .$$
	By using Lemma \ref{lem:Lou generalization} (with $E := \Si \cap \Om$), we find that $f(u)$ vanishes a.e. in the set $\left\lbrace | \na u|=0 \right\rbrace \cap \left\lbrace 0 \le u <t \right\rbrace$. 
	Since $f(u) \ge \phi(u) \ge \phi(t) >0$ in $\left\lbrace 0 \le u < t \right\rbrace$, this is only
	possible if the singular set $\left\lbrace | \na u|=0 \right\rbrace \cap \left\lbrace 0 \le u <t \right\rbrace$ has zero measure. Therefore, we have
	$$
	\mu(t)= \mu_0 (t) +  w  \left( \left\lbrace u=M \right\rbrace \right) \quad \text{for every }  \, t <M .
	$$
	%
	%
	%
	%
	Since $\mu_0 (t)$ is absolutely continuous (as noticed in item (ii)), $\mu$ is an absolutely continuous function for $t<M$. Thus, also $I$ and $K$ are absolutely continuous for $t<M$.
	\medskip
	
	{\emph{(iv)}} Under hypothesis \eqref{eq:GENERAL condizione a} of Theorem~\ref{thm:MAIN GENERAL},
	 the claim is obvious because $\al$ and $\be$ in \eqref{eq:def weighted albe} are $\ge 0$. On the other hand, under hypothesis \eqref{eq:GENERAL condizione b} we have that
	item (iii) in the statement of
	Lemma~\ref{lem:weighted con4items}
	applies and hence $K$ is absolutely continuous. Moreover, by item (i) we deduce that 
	\begin{equation}\label{eq:daprovare}
		-K' \ge 0 \quad \text{a.e.}
	\end{equation} 
	and hence $K$ is nonincreasing also in this case.
	To prove \eqref{eq:daprovare}, we notice that in light of item (i), $- K'(t)$ has the same sign as 
	$\al f(t) + \be I(t)/ \mu (t)$	, since
	$I$, $\mu$, $- \mu'$ are nonnegative by definition. Thus, since $\be < 0$, we need $I(t)/ \mu (t) +
	( \al / \be )f(t) \le 0$ a.e. Observing that $I(t)/ \mu(t)$ is the mean of $f(u)$ over the superlevel
	set ${u > t}$, we easily conclude that a sufficient condition for $I/ \mu + ( \al / \be )f \le 0$ is
	that $f(s) \le -( \al / \be ) f(t)$, whenever $s > t$.
	And this is satisfied if $\phi \le f \le -(\al / \be ) \phi$
	for some nonincreasing $\phi \ge 0$. Replacing $\al$, $\be$ by their values in \eqref{eq:def weighted albe} we obtain the condition \eqref{eq:GENERAL condizione b} in Theorem \ref{thm:MAIN GENERAL} since $- \al / \be = p D / (D -p) $.
\end{proof}

\begin{lem}[Pohozaev-type identity]
	\label{lem:Pohozaev General}
	 Let $\Sigma$ be an open convex cone in $\RR^N$ and let $\Omega \subset \RR^N$ be a bounded smooth domain such that $\pa \Gamma_0 = \pa \Gamma_1$.
	%
	%
	Let $u \in C^1 \left( (\Si \cap \Om) \cup \Ga_0 \cup ( \Ga_1 \setminus \left\lbrace 0 \right\rbrace ) \right) \cap W^{1,\infty}(\Si \cap \Om )$ 
	be a solution of \eqref{eq:GENERALPB weighted anisotropic problem in cones}, with $f\in L^{\infty}_{ \mathrm{loc} }([0,\infty))$ nonnegative
	and $w$ satisfying~\eqref{eq:assumptions on w for main theorems} and \eqref{eq:assumptions on w for main theorems3}. Let~$D$ be as in~\eqref{Ddef}.
	
	Then, the following identity holds:
	\begin{equation}\label{eq:weighted anisotropic Pohozaev}
		D \int_{\Si \cap \Om} F(u) \, w(x) \, d \cH^{N} +  \frac{p-D}{p} \int_{ \Si \cap \Om } u \, f(u) \, w(x) \, d \cH^{N} = \frac{1}{p'} \int_{\Ga_0 } H (\na u)^p \, \langle x, \nu \rangle \, w(x) \, d \cH^{N-1} ,
	\end{equation}
where
$$ 
F(s) := \int_0^s f(\tau) \, d\tau .
$$
\end{lem}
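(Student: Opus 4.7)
\medskip

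\noindent\emph{Proof plan.} The scheme is the classical Pohozaev one, adapted to the anisotropic and weighted framework: test the equation against $\langle x,\nabla u\rangle$ (and, separately, against $u$) and use that both $H$ and $w$ are positively homogeneous. A certain amount of care is however needed because $\langle x,\nabla u\rangle$ is not admissible as a test function (it is not compactly supported in $\Sigma\cap\Omega$ and, crucially, the weight $w$ can degenerate or blow up at the vertex of the cone), and this is where the true technical work of the proof lies.

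First I would derive the identity formally. Testing the equation in \eqref{eq:GENERALPB weighted anisotropic problem in cones} against $\langle x,\nabla u\rangle$ and integrating by parts over $\Sigma\cap\Omega$, the interior term produces
\[
\int_{\Sigma\cap\Omega} w\,H(\na u)^{p-1}\,\bigl\langle \na_\xi H(\na u),\,\na\langle x,\na u\rangle\bigr\rangle\,d\cH^N ,
\]
which, by the Euler identity \eqref{eq:proprietacheserveinconti} applied to $H$ (degree $1$) and to $H^p/p$ (degree $p$), rewrites as $\int w\,H^p\,d\cH^N+\int w\,\langle x,\na(H^p/p)\rangle\,d\cH^N$. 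A further integration by parts on the second summand, together with the homogeneity of $w$ (which gives $\dv(w\,x)=(N+\lambda)\,w=D\,w$ in $\Si$), yields the factor $\tfrac{p-D}{p}\int w\,H^p\,d\cH^N$ plus a boundary contribution on $\Ga_0$ equal to $\tfrac1p\int_{\Ga_0}w\,H^p\,\langle x,\nu\rangle\,d\cH^{N-1}$. On the right-hand side, $f(u)\,\langle x,\na u\rangle=\langle x,\na F(u)\rangle$ and another integration by parts produces $-D\int w\,F(u)\,d\cH^N$, with no boundary contribution since $F(u)=F(0)=0$ on $\Ga_0$ and $\langle x,\nu\rangle\equiv 0$ on $\Ga_1\setminus\{0\}$ (this is the key geometric fact: the position vector is tangential to $\partial\Sigma$, because $\Sigma$ is a cone with vertex at the origin). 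The boundary piece coming from the first integration by parts is handled similarly: it vanishes on $\Ga_1\setminus\{0\}$ by the Neumann condition $\langle \na_\xi H(\na u),\nu\rangle=0$, and on $\Ga_0$ one uses that $\na u$ is anti-parallel to the outer normal $\nu$ (because $u=0$ there and $u\ge 0$ inside), combined with the homogeneity of $\na_\xi H$, to rewrite it as $-\int_{\Ga_0} w\,H(\na u)^p\,\langle x,\nu\rangle\,d\cH^{N-1}$. Collecting the three $\Ga_0$-terms gives the factor $\tfrac{1}{p'}=\tfrac{p-1}{p}$ in \eqref{eq:weighted anisotropic Pohozaev}. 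Finally, the term $\int w\,H^p\,d\cH^N$ is eliminated by testing the equation against $u$ itself, which gives $\int w\,H(\na u)^p\,d\cH^N=\int u\,f(u)\,w\,d\cH^N$ (boundary terms vanish for the same reasons as above).

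The main obstacle, and the step I would write out carefully, is the \emph{justification} of these integrations by parts under the stated (minimal) regularity of $u$, of $w$, and of the cone $\Sigma$ (which is merely convex, with possibly a singular vertex at $0$). The strategy would be an approximation/cut-off argument: introduce a smooth cut-off $\eta_\varepsilon$ that vanishes in a small Wulff neighbourhood of the vertex and is equal to $1$ away from it, multiply the test vector field $x$ by $\eta_\varepsilon$, and send $\varepsilon\to 0^+$. The assumption $u\in W^{1,\infty}(\Sigma\cap\Omega)$, combined with $w$ being continuous on $\overline\Sigma$ (and homogeneous of degree $\lambda\ge 0$), ensures that all the boundary integrands near the vertex are bounded by a constant times $H(\nabla u)^p\,w\,|x|$, which is integrable up to the vertex since $w(x)|x|=O(|x|^{\lambda+1})$ and $\cH^{N-1}(\partial B_\varepsilon\cap\Sigma)=O(\varepsilon^{N-1})$: the residual at the vertex therefore vanishes. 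Along $\Ga_1\setminus\{0\}$ one exploits convexity of $\Sigma$ so that $\nu$ is well defined $\cH^{N-1}$-a.e., together with the hypothesis $\partial\Ga_0=\partial\Ga_1$, to justify the splitting of $\partial(\Sigma\cap\Omega)$ into $\Ga_0\cup(\Ga_1\setminus\{0\})$; the Neumann condition $\langle\na_\xi H(\na u),\nu\rangle=0$ is then used pointwise a.e.\ on $\Ga_1\setminus\{0\}$. On $\Ga_0$, the $C^1$-regularity of $u$ up to $\Ga_0$ gives the boundary integrals classical meaning. With these technical steps in place, the formal computation above becomes rigorous and yields exactly \eqref{eq:weighted anisotropic Pohozaev}.
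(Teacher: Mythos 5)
Your formal derivation of the identity is correct in structure, and you have correctly identified the geometric facts that make the boundary terms behave as claimed (the Euler identity for $H$, $\langle x,\nu\rangle\equiv 0$ on $\Ga_1$, the anti-parallel orientation of $\na u$ on $\Ga_0$, and $\dv(w\,x)=D\,w$). However, the technical justification you propose has a genuine gap that a cut-off near the vertex cannot close.

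The obstruction is not only the vertex: it is the lack of \emph{second-order} regularity of $u$. To test the weak formulation against $\langle x,\na u\rangle$ and then integrate by parts, one needs $\na\langle x,\na u\rangle$, i.e.\ some control on $D^2 u$. Under the hypotheses, $u$ is only $C^1\cap W^{1,\infty}$, and no better can be expected because the operator degenerates where $\na u=0$ (so in general $u\notin C^2$, and $u\notin W^{2,p}$ either). Your cut-off $\eta_\varepsilon$ near the vertex does not touch this issue, since it is a problem at interior critical points, not at the vertex. The paper handles this by a different layer of approximation: it replaces $\psi(t)=t^p/p$ by a uniformly elliptic regularization $\psi_\varepsilon$, solves the regularized equation in a smooth subdomain $A\subset\subset E_\delta$ with boundary datum $u$ to obtain $u_\varepsilon\in C^{1,\gamma}(\ol A)\cap W^{2,2}_{\mathrm{loc}}(A)$ with $u_\varepsilon\to u$ in $C^1(\ol A)$, applies the Pucci--Serrin variational identity \cite{PS} to $u_\varepsilon$ (which is rigorous in $W^{2,2}_{\mathrm{loc}}$), and only then lets $\varepsilon\to 0$. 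This produces the interior identity \eqref{eq:integral identity su E delta} for compactly supported Lipschitz cut-offs.

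A second, smaller gap: you cut off only near the vertex $0$, while the paper also removes a $\delta$-tubular neighbourhood of $\pa\Ga_0=\pa\Ga_1$. This is needed because the corner where the Dirichlet and Neumann portions of the boundary meet is exactly where the solution is not known to be $C^1$, and without removing it one cannot justify the splitting of the boundary integral into the $\Ga_0$ and $\Ga_1$ pieces. The paper then lets a Lipschitz cut-off $\eta_k\nearrow 1$ inside the truncated domain $E_\delta$ (with a boundary-trace convergence statement as in \cite{DMS,CDLP,AFP}), and finally sends $\delta\to 0$, using $u\in W^{1,\infty}$ to kill the contribution from the shrinking caps. Your asymptotic estimate $w(x)\,|x|=O(|x|^{\lambda+1})$ is a relevant ingredient for the vertex contribution, but by itself it does not replace the two-parameter scheme ($\varepsilon$ for the operator degeneracy, $\delta$ and $k$ for the domain and cut-off) that makes the formal computation rigorous.
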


\begin{proof}
Due to lack of regularity in our setting, in order to prove \eqref{eq:weighted anisotropic Pohozaev}
we argue by approximation.
We approximate $\Si \cap \Om$ by Lipschitz domains $E_\de$ obtained by chopping off a $\de$-neighborhood of $0$ (in the case $0 \in \Gamma_1$) and a $\de$-tubular neighborhood of $\pa \Ga_0$. 

\medskip

{\emph{Step 1.}} We prove that for any $\eta \in C^{0,1}_c( E_\de )$ we have that
\begin{equation}\label{eq:integral identity su E delta}
- \int_{E_\de}  \cR \, \na  \eta \, d \cH^{N} =
\int_{E_\de} \eta(x) \, w(x) \left\lbrace (N+\la) \, F(u)  + \frac{ p - (N+\la) }{ p } u  \, f(u) \right\rbrace d \cH^N ,
\end{equation}
where
\begin{equation*}
	\begin{split}
\cR : = w(x)  \bigg\lbrace x   \Big(  F(u ) - \frac{ H \left( \na u   \right)^p}{ p } \Big) \bigg.
	& + 
	\langle x , \na u  \rangle \,  H \left( \na u 
	\right)^{p-1}   \na_\xi H(\na u ) 
	\\
	& \bigg. +  \left( \frac{ (N+\la) - p }{ p } \right) \, u  \, H \left( \na u  \right)^{p-1}
  \na _\xi H \left( \na u   \right)  \bigg\rbrace .
	\end{split}
\end{equation*}
For any given $\eta \in C^{0,1}_c( E_\de )$, we
consider a smooth (say $C^{1, \ga}$) open set $A \subset \subset E_\de$ such that $\mathrm{supp} ( \eta ) \subset A$.

Furthermore, by setting
	$$
	\psi(t):= \frac{t^p}{p} , \quad \text{ for } t \ge 0 ,
	$$ we see that
	$u$ satisfies the equation
	$$
	- \dv \left( w(x) \, \psi' \left( H( \na u ) \right) \na_\xi H ( \na u )   \right)  = f(u) \, w(x) \quad \text{ in } \Si \cap \Om.
	$$

		For~$t \ge 0$ and $\ve \in (0,1)$, we let
		$$
		\psi_\ve (t) := \psi \left( \sqrt{\ve^2 +t^2 } \right) - \psi( \ve ). 
		$$
		We define $\Psi(t) := \psi'(t) t$
		and $\Psi_\ve (t) := \psi'_\ve (t) t$. From a standard argument (see for instance \cite[Lemma~4.2]{CFV} or \cite[Lemma~3.2]{BC}) we have that
		$$
		\psi_\ve \to \psi \quad \text{ and }\quad \Psi_\ve \to \Psi \, \text{ uniformly on compact sets of } [ 0, +\infty ). 
		$$	
	
		Let $u_\ve$ be solution of
		\begin{equation*}
			\begin{cases}
				\dv \left( w(x) \, \psi_\ve' \left( H( \na u_\ve) \right) \na_\xi H ( \na u_\ve )   \right)  = f(u) \, w(x)  \quad \text{ in } A
				\\
				u_\ve = u \text{ on } \pa A .
			\end{cases}
		\end{equation*} 
		Standard regularity results give that	
		$u_\ve \in C^{1,\ga} (\ol{ A } ) \cap W^{2,2}_{\mathrm{loc} } ( A )$ and $u_\ve \to u$ in $C^1 (\ol{A})$, as $\ve \to 0$. 
		For example, one can obtain $u_\ve$ by minimizing the functional
		$$
		\int_A \left\lbrace \psi_\ve \left( H (\na v) \right)
         - f(u(x)) \, v \right\rbrace w(x) \, d \cH^N
		$$ 
		among functions $v \in W^{1,p}(A)$ with $v-u \in W^{1,p}_0 (A)$,
		and then exploit arguments similar to those used in \cite[Proposition 4.3]{CFV}.
		
		Notice that, since $A \subset \subset \Si$, we have that $w \in C^{0,1} ( \ol{A} )$ and $w>0$ on $\ol{A}$ and hence the results of regularity (up to the boundary) contained in \cite{Li} can be used on $A$.
		
Now we set
	\begin{equation*}
	\begin{split}
\cR_\ve : = w(x)  \bigg\lbrace x   \Big(  F(u ) - \psi_\ve \left( H \left( \na u_\ve  \right)  \right) \Big) \bigg.
	& + 
	\langle x , \na u_\ve  \rangle \, \psi_\ve' \left( H \left( \na u_\ve 
	\right)  \right)   \na_\xi H(\na u_\ve  ) 
	\\
	& \bigg. +  \left( \frac{ (N+\la) - p }{ p } \right) \, u_\ve  \, \psi_\ve' \left( H \left( \na u_\ve   \right) \right)
  \na _\xi H \left( \na u_\ve   \right)  \bigg\rbrace
	\end{split}
	\end{equation*}
and we exploit
\cite[Proposition 1]{PS} and \cite[Remark at page 685]{PS} with 
\begin{eqnarray*}&&
	\mathcal{F} := \mathcal{F}_\ve (x , \na u_{\ve} ) := \left\lbrace \psi_\ve \left( H (\na u_\ve) \right) - F(u(x)) \right\rbrace w(x) , 
	\qquad h:=x ,
	\qquad a :=  \frac{ (N+\la) - p }{p}\\
&& {\mbox{and}}	\;\qquad \cG (x) := - f(u(x)) w(x) ,
	\end{eqnarray*}
	equation~\eqref{eq:proprietacheserveinconti}, and the fact that $\langle x, \na w \rangle = \la \, w(x)$ in $\Si$
	(which holds true in light of the $\la$-homogeneity of~$w$), to see that
\begin{equation}
\begin{split}
\dv (\cR_\ve) = 
& (N+\la) \, F(u) \, w(x) + \frac{ p - (N+\la) }{ p } \, u_\ve \, f(u) \, w(x)
\\
& \quad+ (N + \la) \, w(x) \left\lbrace \frac{\Psi_\ve ( H( \na u_\ve ))  }{p} - \psi_\ve ( H( \na u_\ve )) \right\rbrace
\\
& \quad+ w(x) \,  \left\lbrace F'(u) \langle x , \na u \rangle - f(u) \langle x , \na u_\ve \rangle  \right\rbrace
\end{split}
	\end{equation}
in the weak sense in $A$.
By recalling that $\eta \in C_c^{0,1} ( E_\de )$, and that $\mathrm{supp} (\eta) \subset A$, we thus can write that
\begin{equation}\label{djeifheugeuo}
\begin{split}
- \int_{E_\de}  \cR_\ve \, \na  \eta \, d \cH^{N} = 
& \int_{E_\de} \eta(x) \, w(x)
\left\lbrace (N+\la) \, F(u)  + \frac{ p - (N+\la) }{ p } u_\de^\ve  \, f(u) 
\right\rbrace d \cH^N 
\\
&\quad + (N + \la) \int_{E_\de} \eta(x)   \, w(x) \left\lbrace \frac{\Psi_\ve ( H( \na u_\ve ))  }{p} - \psi_\ve ( H( \na u_\ve )) \right\rbrace d \cH^N 
\\
& \quad+ \int_{E_\de} \eta(x)\, w(x) \,  \left\lbrace F'(u) \langle x , \na u \rangle - f(u) \langle x , \na u_\ve \rangle  \right\rbrace d \cH^N .
\end{split}
\end{equation}
We also note that
$$
\frac{\Psi ( H( \na u ))  }{p} - \psi ( H( \na u )) = 0.
$$
Moreover, since~$F'(t)=f(t)$ for a.e. $t$, and hence $F'(u(x))=f(u(x))$ for
a.e. $x \in \left\lbrace \na u \neq 0 \right\rbrace$, we thus have that
\begin{eqnarray*}&&
\int_{E_\delta}\eta(x)\,w(x)\, F'(u)  \langle x , \na u \rangle \, d \cH^N  = \int_{ E_\delta\cap
\left\lbrace \na u \neq 0 \right\rbrace}\eta(x)\,w(x)\, f(u)  \langle x , \na u \rangle \, d \cH^N 
\\&&\qquad\qquad= \int_{E_\delta}\eta(x)\,w(x)\, f(u)  \langle x , \na u \rangle \, d \cH^N .
\end{eqnarray*}
This implies that
$$
\int_{E_\de} \eta(x)\, w(x) \, \langle x , \na u \rangle  \left\lbrace F'(u)  - f(u)  \right\rbrace d \cH^N =
0 .
$$
Hence, by taking the limit for $\ve \to 0$ in~\eqref{djeifheugeuo}, we obtain \eqref{eq:integral identity su E delta}.
%
%

\medskip

{\emph{Step 2.}}
For $k \ge 1$ we define $\phi_k : \RR \to \left[ 0 , 1 \right]$ as follows
$$
\phi_k (s) := 
\begin{cases}
0 \quad & \text{ if } \, s \le \frac{1}{k},
\\
k s - 1 \quad & \text{ if } \,  \frac{1}{k} < s < \frac{2}{k},
\\
1 \quad & \text{ if } \, s  \ge  \frac{2}{k} .
\end{cases}
$$
Then we define $\eta_k \in C_c^{0,1}(E_\de)$ as
\begin{equation*}
\eta_k (x) := \phi_k( \mathrm{dist}(x, \RR^N \setminus E_\de)).
\end{equation*}
In this way we have that 
$$\eta_k (x) \to 1 \quad \text{ for every } \, x \in E_\de$$
and that
(see, e.g., \cite[Formula (21)]{DMS}, \cite[Section 7]{CDLP} and \cite[Chapter 3]{AFP})
\begin{equation}\label{eq:messa per sostituire (3.21)}
\lim_k \int_{E_\de} \langle v , \na \eta_k   \rangle d \cH^N = - \int_{\pa E_\de} \langle v , \nu \rangle d \cH^{N-1} , \quad \text{ for any } \, v \in C( \ol{E}_\de ; \RR^N ).
\end{equation}

Now we choose $\eta:=\eta_k$ in \eqref{eq:integral identity su E delta} and we take the limit as $k \to \infty$, using \eqref{eq:messa per sostituire (3.21)} (with $v=\cR$) and taking into account that the boundary conditions in \eqref{eq:GENERALPB weighted anisotropic problem in cones} and the geometry of $\Si$ give that
		$$
		u=0 \quad \text{ and } \quad F(u)=0 \quad \text{ on } \Ga_{0,\de},
		$$
		$$
		\langle \na_\xi H (\na u) , \nu \rangle = 0 \quad \text{ and } \quad \langle x, \nu \rangle = 0 \quad \text{ on } \Ga_{1,\de} ,
		$$
		where we have set 
		$$
		\Ga_{0,\de} := \Ga_0 \cap \pa E_\de , \quad \Ga_{1, \de } := \Ga_1 \cap \pa E_\de \quad \text{ and }\quad
		 \Ga_\de := \pa E_\de \setminus \left( \Ga_{0,\de} \cup \Ga_{1,\de} \right) .
		$$ 
Thus,  we obtain that
		\begin{equation*}
			\begin{split}
        \int_{\Ga_{0, \de} } w(x) & \left\lbrace \langle x, \na u \rangle \,  H^{p-1} \left( \na u  \right) \langle \na_\xi H(\na u  ) , \nu \rangle - \langle x, \nu \rangle	 \frac{ H \left( \na u  \right)^p }{ p } \right\rbrace d \cH^{N-1} 
         + \int_{\Ga_\de} \langle \cR , \nu \rangle \, d \cH^{N-1}
        \\ &
        =
        \int_{E_\de} w(x) \left\lbrace (N+\la) \, F(u)  - \frac{ (N+\la) - p }{ p } u  \, f(u) \right\rbrace d \cH^N . 
        \end{split}
	    \end{equation*}
        By using that 
        $$
        \na u = - |\na u| \, \nu \quad \text{ on } \, \Ga_{0, \de} ,
        $$ 
        (which holds true in light of the boundary condition on $\Ga_0$ in \eqref{eq:GENERALPB weighted anisotropic problem in cones}) and \eqref{eq:proprietacheserveinconti}, the previous identity becomes
    \begin{eqnarray*}&&
         \frac{1}{ p' } \int_{\Ga_{0, \de} } w(x)   \,
         H \left( \na u  \right)^p \, \langle x, \nu \rangle \, d \cH^{N-1} + \int_{\Ga_\de} \langle \cR , \nu \rangle \, d \cH^{N-1}
       \\& =&
        \int_{E_\de} w(x) \left\lbrace (N+\la)  F(u)  - \frac{ (N+\la) - p }{ p } u  \, f(u) \right\rbrace d \cH^N  ,
        \end{eqnarray*}
        where $p'= p/ (p-1) $.
		Then we take the limit as $\de \to 0$. Since $u \in W^{1, \infty} (\Si \cap \Om)$ and $\cH^{N-1} (\Ga_\de)$ tends
		to~$0$ as~$\de \to 0$, we have that the integral over $\Ga_\de$ tends to $0$. Recalling that $D= N+\la$, \eqref{eq:weighted anisotropic Pohozaev} follows.
\end{proof}

\section{Isoperimetric inequalities in convex cones}\label{sec:isoperimetric inequalities}

In this section, we revisit the existing literature
on isoperimetric inequalities in convex cones
and, providing several technical improvements
to the previous results,
we frame the isoperimetric theory into a convenient setting
for our applications.
%
%
%
\begin{thm}[Weighted anisotropic isoperimetric inequality in cones, \cite{CRS}]
\label{thm:isoperimetric anisotropic weighted cones}
	Let $\Si$ be an open convex cone in $\RR^N$ with vertex at the origin and 
	let $B$ be the unitary Wulff ball centered at the origin. Let $w$ satisfy~\eqref{eq:assumptions on w for main theorems}, \eqref{eq:assumptions on w for main theorems2}, and \eqref{eq:assumptions on w for main theorems3}. Then, for each measurable set $E \subset \RR^N$ with $w (\Si \cap E) < \infty$,
	\begin{equation}\label{eq:weighted anisotropic isoperimetric in cones}
		\frac{P_{w,H} (E ; \Si)}{w( \Si \cap E)^{\frac{D-1}{D}}} \ge \frac{P_{w,H} (B ; \Si)}{w( \Si \cap B)^{\frac{D-1}{D}}} ,
	\end{equation}
	where $D= N + \la$, and $\la$ is that in \eqref{eq:assumptions on w for main theorems}.
\end{thm}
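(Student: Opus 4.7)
I would adapt to the anisotropic weighted setting the Alexandrov--Bakelman--Pucci (ABP) contact-set approach to the isoperimetric inequality originally introduced by Cabr\'e and developed in \cite{CRS}. First, by a standard inner/outer approximation argument I reduce to the case where $E$ is bounded and $\Si \cap \pa E$ is of class $C^2$ and meets $\pa\Si$ transversally. For such $E$ I would solve the auxiliary linear Neumann problem
\[
\begin{cases}
\dv\bigl(w(x)\, \na u\bigr) = b\, w(x) & \text{in } \Si \cap E, \\
\langle \na u, \nu\rangle = H(\nu) & \text{on } \Si \cap \pa E, \\
\langle \na u, \nu\rangle = 0 & \text{on } E \cap \pa \Si,
\end{cases}
\]
where the constant $b := P_{w,H}(E;\Si)/w(\Si\cap E)$ is fixed by the divergence theorem together with \eqref{eq:def con reduced boundary di anisotropic perimeter}. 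Since $w$ may degenerate on $\pa \Si$, one produces a $C^2_{\mathrm{loc}}(\Si \cap E)$ solution using \eqref{eq:assumptions on w for main theorems3} and exhausts $\Si \cap E$ from inside at the end.

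Second, I introduce the lower contact set
\[
\Gamma_u := \bigl\{ x \in \Si \cap E : u(y) \ge u(x) + \langle \na u(x), y-x\rangle \text{ for all } y \in \Si \cap E \bigr\},
\]
and claim that $\na u(\Gamma_u) \supset \Si \cap B$. For any $p \in \Si \cap B$ (so $H_0(p) < 1$), minimize $y \mapsto u(y) - \langle p, y\rangle$ on $\overline{\Si \cap E}$; the minimizer $x_0$ automatically satisfies $\na u(x_0) = p$ and $D^2 u(x_0) \ge 0$ whenever it is interior. A boundary minimum on $\Si \cap \pa E$ is ruled out by the first Neumann condition together with the anisotropic Cauchy--Schwarz inequality \eqref{eq:Anisotropic_CS_dual}, since $H(\nu) > H(\nu)\, H_0(p) \ge \langle p, \nu\rangle$; a boundary minimum on $E \cap \pa \Si$ is ruled out by the second Neumann condition together with the strict inequality $\langle p, \nu_\Si\rangle < 0$, valid because $\Si$ is convex and $p$ lies in its open interior.

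Third, the crux of the argument is a pointwise Brunn--Minkowski / AM--GM-type inequality: at every $x \in \Gamma_u$,
\[
w(\na u(x))\,\det D^2 u(x) \;\le\; \left(\frac{\dv\bigl(w(x)\na u(x)\bigr)}{D}\right)^{\!D} \frac{1}{w(x)^{D-1}} \;=\; \left(\frac{b}{D}\right)^{\!D} w(x).
\]
This is the main obstacle and is precisely where the structural hypotheses on $w$ enter. Its proof interlaces three ingredients: the concavity of $w^{1/\la}$ (which couples $w(\na u(x))$ to $w(x)$ along a suitable segment, both endpoints lying in $\Si$ on the contact set), the $\la$-homogeneity identity $\langle \na w(x), x\rangle = \la w(x)$ used to expand $\dv(w\,\na u) = w\,\Delta u + \langle \na w, \na u\rangle$, and the classical arithmetic--geometric mean inequality applied to the eigenvalues of $D^2 u \ge 0$.

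Finally, the area formula applied to $\na u$ restricted to $\Gamma_u$, combined with the coverage claim from Step~2, yields
\[
w(\Si \cap B) \;\le\; w\bigl(\na u(\Gamma_u)\bigr) \;\le\; \int_{\Gamma_u} w(\na u(x))\,\det D^2 u(x)\, dx \;\le\; \left(\frac{b}{D}\right)^{\!D} w(\Si \cap E).
\]
Substituting $b = P_{w,H}(E;\Si)/w(\Si\cap E)$ and taking $D$-th roots gives $P_{w,H}(E;\Si) \ge D\, w(\Si\cap B)^{1/D}\,w(\Si\cap E)^{(D-1)/D}$, and rewriting the constant $D\, w(\Si \cap B)$ as $P_{w,H}(B;\Si)$ by means of \eqref{eq:relazione volume perimetro} delivers exactly the desired inequality \eqref{eq:weighted anisotropic isoperimetric in cones}.
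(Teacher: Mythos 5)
The paper does not prove Theorem~\ref{thm:isoperimetric anisotropic weighted cones}; it cites it directly to Cabr\'e--Ros-Oton--Serra~\cite{CRS}, as stated immediately after the theorem. Your proposal is a faithful reconstruction of the ABP contact-set proof from~\cite{CRS} --- the auxiliary anisotropic Neumann problem with $b=P_{w,H}(E;\Si)/w(\Si\cap E)$, the gradient-coverage claim $\na u(\Gamma_u)\supset \Si\cap B$ ruled out at the two boundary pieces via the anisotropic Cauchy--Schwarz bound and the convexity of $\Si$, the pointwise bound $w(\na u)\det D^2u\le (b/D)^D w(x)$ obtained by interlacing the concavity of $w^{1/\la}$, Euler's identity $\langle\na w,x\rangle=\la w$, and weighted AM--GM, and the final area-formula count --- so it is correct and takes the same approach as the cited source.
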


Theorem~\ref{thm:isoperimetric anisotropic weighted cones} has been obtained in by Cabr\'{e}--Ros-Oton--Serra in \cite{CRS} in the more general setting in which $H$ is a gauge. Since we do not need such a generality here, we have stated it assuming
that~$H$ is a norm. 

As mentioned in the introduction, Wulff balls centered at the origin intersected with $\Si$ are always minimizers of \eqref{eq:weighted anisotropic isoperimetric in cones}. Let us discuss about the uniqueness of those minimizers, or, in other words, about the characterization of the equality sign in \eqref{eq:weighted anisotropic isoperimetric in cones}.

In \cite{CRS} the authors do not provide the characterization of the equality case, stating that such a characterization will be postponed in a future work. To the authors' knowledge such a characterization among general sets in the unweighted anisotropic setting of Theorem \ref{thm:isoperimetric anisotropic weighted cones} is still not available in the literature.

We recall that the isoperimetric inequality \eqref{eq:weighted anisotropic isoperimetric in cones} in the unweighted isotropic setting (i.e., when $w \equiv 1$ and $H$ is the Euclidean norm) is due to Lions--Pacella~\cite{LP}. In this setting, they also obtained the characterization of the equality case when $\Si \setminus \left\lbrace 0 \right\rbrace $ is assumed to be smooth.
%
%

Quantitative versions of inequality \eqref{eq:weighted anisotropic isoperimetric in cones} in the unweighted isotropic setting (i.e., when $w \equiv 1$ and $H$ is the Euclidean norm) including the characterization of the equality case (in cones that are not necessarily smooth outside the origin) have been obtained by Figalli-Indrei in \cite{FI}. In Theorem \ref{thm:characterization unweighted anisotropic} below we show that their proof can be extended to the case of a general norm $H$. 

A quantitative version of inequality \eqref{eq:weighted anisotropic isoperimetric in cones} in the weighted isotropic setting (i.e., with $w$ as in Theorem~\ref{thm:isoperimetric anisotropic weighted cones} and $H$ the Euclidean norm) including the characterization of the equality case (in cones that are not necessarily smooth outside the origin) has been recently obtained by Cinti--Glaudo--Pratelli--Ros-Oton--Serra in \cite{CGPRS}.
 
A different proof of Theorem \ref{thm:isoperimetric anisotropic weighted cones} has been obtained by Milman-Rotem in \cite{MR}. There, the authors also provide a characterization under the additional assumption that $\Si \cap E$ is convex\footnote{\cite{MR} has been published in [Adv. Math.262, 2014, 867–908]. 
%
%
In the published version, the convexity assumption in the characterization of minimizers was missing. This has been later fixed in \cite{MR} (see footnote 1 in \cite{MR}).}. Due to the convexity restriction, we cannot apply their characterization in the present paper. 
In fact, in order to obtain Theorms \ref{thm:cones}, \ref{thm:MAIN unweighted in Wulff ball}, \ref{thm:weighted cones} from Theorem \ref{thm:MAIN GENERAL}, we need to use such a characterization on (almost all) the superlevel sets $ \left\lbrace u > t \right\rbrace$, and we do not know a priori whether those sets are convex or not.

\begin{thm}[Characterization of minimizers in the unweighted anisotropic isoperimetric inequality in convex cones]
\label{thm:characterization unweighted anisotropic}
	Let $\Si$ be an open convex cone in $\RR^N$ with vertex at $0$ and let $B$ be the unitary Wulff ball centered at $0$. 
	Then,
	\begin{equation}\label{eq:anisotropic isoperimetric in cones}
		\frac{P_H (E ; \Si)}{\cH^{N}( \Si \cap E)^{\frac{N-1}{N}}} \ge \frac{P_H (B ; \Si)}{\cH^{N}( \Si \cap B)^{\frac{N-1}{N}}} ,
	\end{equation}
	for every measurable set $E \subset \RR^N$ with $\cH^N ( \Si \cap E ) <\infty$.
	
		Moreover, up to rotations, we
		write $\Si= \RR^k \times \tilde{\Si}$, where $0 \le k \le N$ and $\tilde{\Si} \subset \RR^{N-k}$
		is an open convex cone containing no lines, and
the equality sign holds in~\eqref{eq:anisotropic isoperimetric in cones}
if and only if $E$ is a Wulff ball of some radius $r>0$ centered
at $x_0 \in \RR^k \times \left\lbrace 0_{\RR^{N-k}} \right\rbrace $.
\end{thm}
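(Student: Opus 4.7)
The plan is to adapt Figalli--Indrei's Brenier optimal transport approach \cite{FI} from the Euclidean to the anisotropic setting, the essential observation being that the norm $H$ enters nontrivially only in one place: the boundary estimate, where the anisotropic duality \eqref{eq:Anisotropic_CS_dual} replaces the Euclidean Cauchy--Schwarz inequality. Since the inequality \eqref{eq:anisotropic isoperimetric in cones} itself is exactly the $w \equiv 1$ case of Theorem \ref{thm:isoperimetric anisotropic weighted cones}, the task reduces to characterizing equality.

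Given $E$ with $\cH^N(\Si \cap E) = V$, fix $r_E > 0$ so that $\cH^N(\Si \cap (r_E B)) = V$, and let $T = \na \varphi : \Si \cap E \to \Si \cap (r_E B)$ be the Brenier map between the normalized Lebesgue measures on these two sets. Then $DT$ is symmetric positive semi-definite with $\det DT = 1$ a.e., so the AM--GM inequality applied to its eigenvalues yields $\dv(T) = \tr(DT) \ge N$ a.e. Applying the divergence theorem on $\Si \cap E$ and splitting the (reduced) boundary into the pieces $(\pa^* E) \cap \Si$ and $E \cap \pa^* \Si$, one gets
$$
N V \le \int_{(\pa^* E) \cap \Si} \langle T, \nu \rangle \, d\cH^{N-1} + \int_{E \cap \pa^* \Si} \langle T, \nu_\Si \rangle \, d\cH^{N-1}.
$$
The second integrand is $\le 0$ because $T$ takes values in the closed convex cone $\cl{\Si}$, so it lies on the inner side of $\pa \Si$. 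For the first, the anisotropic Cauchy--Schwarz inequality \eqref{eq:Anisotropic_CS_dual} combined with $H_0(T) \le r_E$ yields $\langle T, \nu \rangle \le H_0(T) H(\nu) \le r_E H(\nu)$, whence the right-hand side is bounded by $r_E P_H(E;\Si)$; this reproves \eqref{eq:anisotropic isoperimetric in cones} upon invoking \eqref{eq:relazione volume perimetro}.

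If equality holds, each of the three inequalities above must be an equality: (i) AM--GM equality forces $DT = I$ a.e., so on each connected component of $\Si \cap E$ the map $T$ has the form $T(x) = x + v$ for some constant vector $v$; (ii) $H_0(T) = r_E$ together with equality in \eqref{eq:Anisotropic_CS_dual} on $(\pa^* E) \cap \Si$ forces this reduced boundary to lie on the Wulff shape $\{H_0(\cdot + v) = r_E\}$, and hence $\Si \cap E = \Si \cap (r_E B - v)$ up to null sets; (iii) $\langle T, \nu_\Si \rangle = 0$ a.e. on $E \cap \pa^* \Si$. Writing $\Si = \RR^k \times \tilde{\Si}$ with $\tilde{\Si}$ line-free and using that $\langle x, \nu_\Si(x) \rangle = 0$ on $\pa \Si$ (since $\pa \Si$ is ruled by rays through $0$), condition (iii) reduces to $\langle v, \nu_\Si(x) \rangle = 0$ for $\cH^{N-1}$-a.e. $x \in E \cap \pa^* \Si$. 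Because $\tilde{\Si}$ contains no lines, the normals $\nu_\Si$ to $\pa \tilde{\Si}$ span $\RR^{N-k}$, which forces $v \in \RR^k \times \{0_{\RR^{N-k}}\}$. Hence $E$ coincides up to null sets with $r_E B - v$, a Wulff ball centered at $x_0 := -v \in \RR^k \times \{0_{\RR^{N-k}}\}$. The converse (equality for any such Wulff ball) is immediate from the translation invariance in the $\RR^k$-directions of both $\cH^N(\Si \cap \cdot)$ and $P_H(\cdot;\Si)$.

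The main technical obstacle is justifying the divergence theorem for the merely BV-regular Brenier map $T$, including a meaningful normal trace on $\pa \Si$; following \cite{FI}, this is handled by approximating $E$ by smooth sets, excising a small neighborhood of the vertex of $\Si$, and relying on the local Lipschitz regularity of $\varphi$ inside $\Si$. A secondary subtle point is that the translation vector $v$ in step (i) could a priori depend on the connected component of $\Si \cap E$; however, since the translated components must jointly partition $r_E B \cap \Si$, which is connected, and since each component's image is constrained by condition (ii) to lie inside the same Wulff ball, the vectors across components must coincide.
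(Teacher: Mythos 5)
Your plan follows the direct optimal transport route of Gromov/Figalli--Indrei, whereas the paper takes a shorter path: it sets $K := \Si \cap B$, observes via \eqref{eq:Anisotropic_CS_dual} that $P_K(E) \le P_H(E;\Si)$ for $E \subset \Si$, and then invokes the anisotropic Wulff inequality \eqref{eq:Isoperimetric Anisotropic any convex} together with the known characterization of its equality case (citing \cite{FM} and \cite[Theorem A.1]{FMP}) as a black box to get $E = K + a$; the translation vector $a$ is then pinned down by the exact perimeter decomposition $P_H(a+K;\Si) = P_H(K;\Si) + P_H(S_a)$, where $S_a = \{x \in \pa^*\Si \cap B : \langle \nu_\Si(x), a\rangle < 0\}$. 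The equality $P_H(E;\Si) = P_H(K;\Si)$ forces $P_H(S_a) = 0$, hence $\langle \nu_\Si, a\rangle = 0$ a.e.\ on $\pa^*\Si$ (by scale invariance, since $\pa^*\Si \cap B$ sees every normal direction), and therefore $a \in \RR^k \times \{0_{\RR^{N-k}}\}$. Your route would essentially reprove the Fonseca--M\"uller/Figalli--Maggi--Pratelli characterization from scratch, which is more self-contained but considerably heavier.

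Beyond this choice of route, two steps in your equality analysis have real gaps. First, your step (iii) concludes $v \in \RR^k \times \{0_{\RR^{N-k}}\}$ from $\langle v, \nu_\Si(x)\rangle = 0$ for a.e.\ $x \in E \cap \pa^*\Si$, on the grounds that the normals there span $\RR^{N-k}$. But $E \cap \pa^*\Si$ is a possibly small, $E$-dependent subset of $\pa^*\Si$: for a polyhedral $\tilde\Si$ the translated cone slice $(\Si \cap r_E B) - v$ may touch only one facet, or may even avoid $\pa\Si$ entirely, in which case the normals seen do not span $\RR^{N-k}$ and the claim fails. The paper's $S_a \subset \pa^*\Si \cap B$ is intrinsic to $\Si$ (not to $E$) and, by the cone's scale invariance, does realize every normal direction; this is precisely what makes its argument close. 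Second, your resolution of the multi-component issue is circular: the constant $v_j$ on each component comes out of $DT = I$, and the assertion that ``each component's image is constrained by condition (ii) to lie inside the same Wulff ball'' presupposes a single $v$, which is exactly what has to be shown. Relatedly, the deduction ``$\Si \cap E = \Si \cap (r_E B - v)$ up to null sets'' is not quite what the pushforward gives: from $T = \mathrm{id} + v$ being a measure-preserving bijection onto $\Si \cap (r_E B)$ one only gets $\Si \cap E = (\Si \cap r_E B) - v = (\Si - v) \cap (r_E B - v)$, and this equals $\Si \cap (r_E B - v)$ only \emph{after} you know $v \in \RR^k \times \{0_{\RR^{N-k}}\}$. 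These are precisely the points that the cited Wulff-characterization results already take care of, which is why the paper prefers to reuse them rather than rerun the transport argument in the cone.
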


As already mentioned, when $H$ is the Euclidean norm, the proof of Theorem~\ref{thm:characterization unweighted anisotropic}
can be found in \cite{FI}. There, the authors observe that the (isotropic) isoperimetric inequality in cones (namely formula~\eqref{eq:anisotropic-isoperimetric} with $H$ being the Euclidean norm)
can be obtained as an immediate corollary of the anisotropic isoperimetric inequality
\begin{equation}\label{eq:Isoperimetric Anisotropic any convex}
\frac{P_K (E)}{ \cH^N ( E )^\frac{N-1}{N}} \ge N \cH^N (K)^{\frac{1}{N}} .
\end{equation}
Here, $K$ is an open bounded convex set and, for a set $E$ of finite perimeter (i.e., $P ( E ; \RR^N ) < \infty$),
$$
P_K (E):= \int_{\pa^* E} \nr \nu_E (x) \nr_{K_0} \, d \cH^{N-1} ,
$$
where $\pa^* E$ is the reduced boundary of $E$, $\nu_E$ is the measure theoretic outer unit normal, and
$$
\nr \nu \nr_{K_0} := \sup \left\lbrace \langle \nu , z \rangle \, : \, z \in K \right\rbrace .
$$
Inequality \eqref{eq:Isoperimetric Anisotropic any convex} and its characterization of the equality case are well known in the literature. We refer to \cite{Di, Scha, DP, Ta3, FM, BM, MS, FI} and references therein.

As noticed in \cite{FI}, the choice $K:=\Si \cap B$, being $B$ the unitary Euclidean ball centered at the origin allows to recover \eqref{eq:anisotropic isoperimetric in cones} when $H$ is the Euclidean norm. This also allows to obtain the desired characterization.
We will show that their argument still applies when $H$ is any norm in $\RR^N$. 

\begin{proof}[Proof of Theorem~\ref{thm:characterization unweighted anisotropic}]
The strategy is to extend to a more general $H$ the arguments used in \cite[Theorem~2.2]{FI}.
To this aim, we set 
$$
K := \Si \cap B , \quad \text{ where } B \text{ is the unitary Wulff ball.}
$$
Let now $E$ be a set contained in $\Si$ and with finite perimeter.
We notice that, if $z \in K$, then $H_0(z) \le 1$, and hence by \eqref{eq:Anisotropic_CS_dual},
$$
\nr \nu_E \nr_{K_0} := \sup \left\lbrace \langle \nu_E , z \rangle \, : \, z \in K \right\rbrace \le H( \nu_E  ) .
$$
By definition of $\nr \cdot \nr_{K_0}$ it easily follows that $\nr \nu_\Si \nr_{K_0} = 0$ for $\cH^{N-1}$-a.e. $x \in \pa \Si \setminus \left\lbrace 0 \right\rbrace$, and hence also for $\cH^{N-1}$-a.e. $x \in \pa \Si \cap \pa^* E $.
Thus, 
\begin{equation*}
\begin{split}
P_K (E)
& = \int_{\pa^* E} \nr \nu_E (x) \nr_{K_0} \, d \cH^{N-1}
\\
& = \int_{\Si \cap \pa^* E} \nr \nu_E (x) \nr_{K_0} \, d \cH^{N-1}
\\
& \le \int_{\Si \cap \pa^* E} H( \nu_E (x) ) \, d \cH^{N-1}
\\
& = P_H (E; \Si) ,
\end{split}
\end{equation*}
where in the last identity we used \eqref{eq:def con reduced boundary di anisotropic perimeter}.
In light of the inequality
\begin{equation}\label{eq:confronto perimetri K H}
P_K (E) \le P_H (E; \Si) 
\end{equation}
and recalling \eqref{eq:relazione volume perimetro} (with $w \equiv 1$), it is clear that \eqref{eq:anisotropic isoperimetric in cones} follows from \eqref{eq:Isoperimetric Anisotropic any convex}.

Now assume that $E$ satisfies the equality in \eqref{eq:anisotropic isoperimetric in cones}.
In light of \eqref{eq:confronto perimetri K H}, $E$ will also satisfy the equality in \eqref{eq:Isoperimetric Anisotropic any convex}. 
By rescaling, if necessary, we may assume $\cH^N (E)=\cH^N (K)$. 
Thus, we have that
\begin{equation}\label{eq:charisop step 0}
P (K ; \Si ) = N \cH^N (K) = P_K (E) = P_H (E; \Si) ,
\end{equation}
where the first equality follows from \eqref{eq:relazione volume perimetro} (with $w \equiv 1$).

By \cite{FM} (see also~\cite[Theorem A.1]{FMP}), we obtain that 
\begin{equation}\label{eq:charisop step 1}
E = K + a \quad \text{ with } \, a \in \ol{ \Si} .
\end{equation}

Notice that, for any $v \in \ol{ \Si }$ it holds that
\begin{equation}\label{eq:charisop step 2}
P_H (v+K \, ; \Si)= P_H ( K ; \Si) + P_H (S_v) ,
\end{equation}
where
$$
S_v := \left\lbrace x \in \pa^* \Si \cap B \, : \, \langle \nu_\Si(x) , v \rangle \neq 0 \right\rbrace
= \left\lbrace x \in \pa^* \Si \cap B \, : \, \langle \nu_\Si(x) , v \rangle < 0 \right\rbrace .
$$

By putting together \eqref{eq:charisop step 0}, \eqref{eq:charisop step 1} and~\eqref{eq:charisop step 2}, we get that
$$
P (K ; \Si ) = P_H (E; \Si) = P_H ( K ; \Si) + P_H (S_a) ,
$$
and hence $P_H (S_a)=0$. By recalling \eqref{eq:relazione equivalenza perimetro anisotropo e isotropo} we find that $\cH^{N-1} (S_a)=0 $, and hence that $\langle \nu_\Si(x) , a \rangle = 0$ for $\cH^{N-1}$-a.e. $x \in \pa^* \Si$. As in \cite{FI}, this gives that the distributional derivative of $\chi_\Si$ is zero in the direction defined by $a$ and hence that
$$
\chi_\Si (x) = \chi_\Si (x + t a) \quad \text{ for all } t \in \RR .
$$
Since $\Si= \RR^k \times \tilde{\Si}$, where $0 \le k \le n$ and $\tilde{\Si} \subset \RR^{N-k}$ contains no lines, we find that $a \in \RR^k \times \left\lbrace 0_{\RR^{N-k}} \right\rbrace$. This concludes the proof.
\end{proof}

In the weighted isotropic setting, \cite[Proposition 1.2]{CGPRS} provides the characterization of the equality case.

\begin{thm}[Characterization of minimizers in the weighted isotropic isoperimetric inequality in convex cones, \cite{CGPRS}]
\label{thm:characterization isotropic weighted ISOperimetric}
		Up to rotations, we can write $\Si= \RR^k \times \tilde{\Si}$, where $0 \le k \le N$ and $\tilde{\Si} \subset \RR^{N-k}$ is an open convex cone containing no lines. 
		Assume that~$H$ is the Euclidean norm.
		Then, the equality sign holds
		in~\eqref{eq:weighted anisotropic isoperimetric in cones} (with~$H$ the Euclidean norm)
		 if and only if $E$ is a (Euclidean) ball of some radius~$r>0$ centered at $x_0 \in \RR^k \times \left\lbrace 0_{\RR^{N-k}} \right\rbrace $.
\end{thm}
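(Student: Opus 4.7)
The plan is to follow the approach of Cinti--Glaudo--Pratelli--Ros-Oton--Serra in \cite{CGPRS}, which sharpens the weighted isoperimetric inequality of \cite{CRS} with a quantitative stability estimate from which the characterization of equality follows. The ``if'' direction is immediate from Theorem~\ref{thm:isoperimetric anisotropic weighted cones}, combined with translation invariance of $w$ along the lineality directions of $\Si$: this moves balls centered at $0$ to balls centered at any point of $\RR^k \times \{0_{\RR^{N-k}}\}$ without altering either the weighted perimeter or the weighted volume. I therefore focus on the ``only if'' direction.

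First, I would reduce to the case when $\Si$ contains no lines. Decomposing $\Si = \RR^k \times \tilde{\Si}$, I would show that $w$ is independent of the first $k$ variables. Indeed, for each $v \in \RR^k \times \{0_{\RR^{N-k}}\}$, the whole line $\{ tv : t \in \RR \}$ lies in $\ol{\Si}$, so $w$ restricted to this line is a positive, $\la$-homogeneous function of $t$. When $\la = 0$ the conclusion is automatic; when $\la > 0$, concavity of $w^{1/\la}$ along the whole line, combined with its $\la$-homogeneity, forces $w^{1/\la}$ to be constant along the line. Hence $w(x', x'') = \tilde{w}(x'')$ for some $\tilde{w} : \tilde{\Si} \to [0, \infty)$ satisfying \eqref{eq:assumptions on w for main theorems}, \eqref{eq:assumptions on w for main theorems2} and \eqref{eq:assumptions on w for main theorems3} in $\RR^{N-k}$, and both weighted perimeter and weighted volume factor through the projection on the second factor.

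Next, I would run the optimal-transport proof of Theorem~\ref{thm:isoperimetric anisotropic weighted cones} (following \cite{CRS}) on $E$, after rescaling so that $w(\Si \cap E) = w(\Si \cap B)$. Letting $T = \na \vp$ be the Brenier map sending $w \, d\cH^N$ restricted to $\Si \cap E$ onto $w \, d\cH^N$ restricted to $\Si \cap B$, one chains together three pointwise inequalities: (a) the arithmetic--geometric mean inequality for the eigenvalues of $D^2 \vp$; (b) the concavity estimate
\begin{equation*}
w(T(x))^{1/\la} \ge w(x)^{1/\la} + \langle \na ( w^{1/\la} )(x), \, T(x) - x \rangle \qquad (\text{when } \la > 0);
\end{equation*}
and (c) a divergence theorem / boundary-trace estimate, which crucially uses that $T(x) \in \ol{\Si}$ a.e. Equality in \eqref{eq:weighted anisotropic isoperimetric in cones} propagates to equality a.e.\ in each of (a), (b) and (c).

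Finally, I would extract the rigidity. Equality in (a) forces $D^2 \vp = c \, \mathrm{Id}$ a.e.\ on $\Si \cap E$, hence $T(x) = c(x - x_0)$ is affine and $\Si \cap E$ is, up to rescaling, a translate of $\Si \cap B$; in particular $E$ must coincide with a Euclidean ball $B_r(x_0)$ for some $r>0$. Equality in (b) forces $w^{1/\la}$ to be affine along each transport segment $[x, T(x)]$, which, combined with $\la$-homogeneity and the conclusion of Step~1, pins $x_0$ down to $\RR^k \times \{0_{\RR^{N-k}}\}$. The main obstacle will be extracting the rigidity from (b) when $\la > 0$ and $w^{1/\la}$ is merely concave (and not strictly so), as well as handling the degenerate situation in which transport segments touch $\pa \Si$; a careful analysis, as carried out in \cite{CGPRS}, resolves both points and completes the characterization.
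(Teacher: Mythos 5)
The paper does not prove this statement: it is presented as a citation to \cite[Proposition 1.2]{CGPRS}, with a short accompanying remark handling the case $k=N$ (where the hypotheses on $w$ force $w$ to be a positive constant, so the inequality reduces to the classical Euclidean one, whose characterization is standard). Your proposal instead attempts to sketch a proof from scratch, and while the overall road map is sensible, it is not a genuine alternative: all the nontrivial rigidity steps are, by your own account, deferred to \cite{CGPRS}. In effect you have rederived the reason the paper cites that reference rather than proving the claim.

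Two more concrete points. First, \cite{CRS} establishes Theorem~\ref{thm:isoperimetric anisotropic weighted cones} by the ABP method (a Neumann problem for a Monge--Amp\`ere-type equation), not by the Brenier map; the ``upper contact set'' replaces the transport domain and the equality analysis tracks where that set touches $\partial\Sigma$, so your item (c) and the ``transport segments touching $\partial\Sigma$'' issue read differently under the actual \cite{CRS} scheme. Second, your Step~1 reduction (showing $w$ is independent of the lineality directions -- correct for $\lambda>0$ since a positive concave function on a full line is constant, and for $\lambda=0$ since continuity at the vertex forces $w\equiv w(0)$) degenerates when $k=N$, as then $\tilde\Sigma$ is a point; at that stage you should instead observe directly that $w$ is constant and invoke the classical isoperimetric characterization in $\RR^N$, which is exactly the paper's remark. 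Finally, the rigidity you extract from item (a) alone (that $T$ is a dilation $x\mapsto c(x-x_0)$) gives $\Sigma\cap E = (1/c)(\Sigma\cap B)+x_0$, but pinning this down to $E=B_r(x_0)$ with $x_0$ in the lineality space genuinely requires the equality cases of (b) and (c) together with the geometry of $\Sigma$; as you acknowledge, this is precisely where \cite{CGPRS} does the work, and your sketch does not reproduce it.
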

We point out that \cite[Proposition 1.2]{CGPRS} is stated for $0 \le k < N$. However, notice that in the case $k=N$ (i.e., $\Si = \RR^N $), \eqref{eq:weighted anisotropic isoperimetric in cones} reduces to the unweighted anisotropic isoperimetric inequality in $\RR^N$, whose characterization is well known in the literature (and of course is contained in Theorem \ref{thm:characterization unweighted anisotropic}). In fact, the assumptions on the weight $w$ in Theorem \ref{thm:isoperimetric anisotropic weighted cones} 
force $w$ to be (a positive) constant whenever $\Si = \RR^N$ (see, e.g., \cite[Remark 3.8 and Lemma 3.9]{CR} and \cite{CRS}).

For other results and more information on weighted isoperimetric inequalities, we refer to Frank Morgan's blog \cite{Mo}, \cite{Ch, CRS, CGPRS}, and references therein.

\begin{rem}
\label{rem:decomposizione coni per smooth}
{\rm
In order to describe the results concerning the characterization of minimizers we used that, for any open convex cone $\Si \subseteq \RR^N$, without loss of generality, up to rotations, we can write $\Si= \RR^k \times \tilde{\Si}$, where $0 \le k \le N$ and $\tilde{\Si} \subset \RR^{N-k}$ is an open convex cone containing no lines.
}
\end{rem}

%
%

\section{Proof of Theorems~\ref{thm:MAIN GENERAL},
\ref{thm:cones}, \ref{thm:MAIN unweighted in Wulff ball} and \ref{thm:weighted cones}}\label{FIDIMO}

In this section, we exploit the results
of the previous sections to prove Theorems~\ref{thm:MAIN GENERAL},
\ref{thm:cones}, \ref{thm:MAIN unweighted in Wulff ball} and \ref{thm:weighted cones}.
To this end, the following Gauss-Green-type identity and H\"older-isopertimetric-type inequality will be useful.

\begin{lem}
Suppose that the assumptions of Theorem \ref{thm:MAIN GENERAL} are satisfied.
Let~$I$ be the function defined in~\eqref{eq:def weighted I J}.
Then, we have the
	following:
	\begin{enumerate}[(i)]
		\item The Gauss-Green-type identity holds true, namely
		\begin{equation}\label{eq:Gauss-Green}
			I(t) = \int_{  \left\lbrace u=t \right\rbrace } H(\na u)^{p-1} H( \nu)  \, w(x) \, d\cH^{N-1} \quad \text{for a.e. } \, t \in \left( 0,M \right) .
		\end{equation}
		\item The H\"older-isopertimetric-type inequality holds true, namely
		\begin{equation}\label{eq:Holder-isoperimetric}
			I(t)^{\frac{1}{p}} \, \left( \int_{  \left\lbrace u=t \right\rbrace } \frac{ w(x) }{| \na u |} \, d \cH^{N-1} \right)^{ \frac{p-1}{p} } \ge c \, \mu(t)^{\frac{D-1}{D}}  , \quad \text{for a.e. } t \in \left( 0 , M \right) ,
		\end{equation}
	where 
		\begin{equation}\label{eq:optimal constant anisotropic isoperimetric}
			c: = \frac{ P_{w, H } \left( B ; \Si \right)}{  w( \Si \cap B)^{\frac{D - 1 }{D}}} 
		\end{equation}
		is the optimal constant in the weighted anisotropic isoperimetric inequality in cones \eqref{eq:weighted anisotropic isoperimetric in cones}, being $B$ the unitary Wulff ball centered at the origin. 
		
		Moreover, the equality sign holds in \eqref{eq:Holder-isoperimetric} if and only if $\left\lbrace  u > t \right\rbrace $ is a minimizer of \eqref{eq:weighted anisotropic isoperimetric in cones} and $H( \na u )$ is constant on $\left\lbrace u=t \right\rbrace$.
	\end{enumerate}
\end{lem}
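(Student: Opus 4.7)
The two items share the same geometric input: for a.\,e.\ $t\in(0,M)$, the level set $\{u=t\}$ inside $\Sigma\cap\Omega$ is a $\mathcal H^{N-1}$-rectifiable hypersurface with outer (measure-theoretic) normal $\nu=-\nabla u/|\nabla u|$, and the singular part $\{u=t\}\cap\{\nabla u=0\}$ has $\mathcal H^{N-1}$-measure zero (this is \eqref{eq:singular set on level sets has zero N-1-measure}, obtained in Lemma~\ref{lem:weighted con4items}). Both items will be established on this good set of $t$.

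For (i), my plan is to apply the divergence theorem on the set $E_t:=\{u>t\}$ to the vector field $w(x)\,H(\nabla u)^{p-1}\nabla_\xi H(\nabla u)$, which is in $L^\infty$ because $u\in W^{1,\infty}(\Sigma\cap\Omega)$. The reduced boundary $\Sigma\cap\partial^*E_t$ is contained in $\{u=t\}$ (for $t>0$, $E_t$ is away from $\Gamma_0$), and on $\partial E_t\cap\Gamma_1$ the Neumann condition $\langle\nabla_\xi H(\nabla u),\nu\rangle=0$ in \eqref{eq:GENERALPB weighted anisotropic problem in cones} kills the boundary integrand. Using the equation and then \eqref{eq:proprietacheserveinconti}, together with the identity $\langle\nabla_\xi H(\nabla u),\nu\rangle=-H(\nabla u)/|\nabla u|$ on $\{u=t\}$ and the homogeneity identity $H(\nu)=H(\nabla u)/|\nabla u|$, gives
\begin{equation*}
I(t)=\int_{\{u=t\}} \frac{H(\nabla u)^{p}}{|\nabla u|}\,w\,d\mathcal H^{N-1}=\int_{\{u=t\}} H(\nabla u)^{p-1}H(\nu)\,w\,d\mathcal H^{N-1},
\end{equation*}
which is \eqref{eq:Gauss-Green}.

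For (ii), I will first rewrite the weighted anisotropic perimeter using the formula $H(\nu)=H(\nabla u)/|\nabla u|$:
\begin{equation*}
P_{w,H}\big(\{u>t\};\Sigma\big)=\int_{\{u=t\}} \frac{H(\nabla u)}{|\nabla u|}\,w\,d\mathcal H^{N-1}.
\end{equation*}
Then I split the integrand as $\bigl(H(\nabla u)^{p}w/|\nabla u|\bigr)^{1/p}\cdot\bigl(w/|\nabla u|\bigr)^{(p-1)/p}$ and apply H\"older's inequality with exponents $p$ and $p/(p-1)$. Using (i) for the first factor yields
\begin{equation*}
P_{w,H}\big(\{u>t\};\Sigma\big)\le I(t)^{1/p}\left(\int_{\{u=t\}}\frac{w}{|\nabla u|}\,d\mathcal H^{N-1}\right)^{(p-1)/p}.
\end{equation*}
Combining with the weighted anisotropic isoperimetric inequality \eqref{eq:weighted anisotropic isoperimetric in cones} from Theorem~\ref{thm:isoperimetric anisotropic weighted cones}, which reads $P_{w,H}(\{u>t\};\Sigma)\ge c\,\mu(t)^{(D-1)/D}$ with $c$ as in \eqref{eq:optimal constant anisotropic isoperimetric}, gives \eqref{eq:Holder-isoperimetric}. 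The equality case is then transparent: equality in H\"older forces $H(\nabla u)^{p}w/|\nabla u|$ to be proportional to $w/|\nabla u|$ on $\{u=t\}$, i.e. $H(\nabla u)$ is constant on $\{u=t\}$ (using $w>0$ in $\Sigma$), while equality in the isoperimetric inequality forces $\{u>t\}$ to be a minimizer of \eqref{eq:weighted anisotropic isoperimetric in cones}; conversely, if both hold then \eqref{eq:Holder-isoperimetric} is an equality.

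The main obstacle is justifying the divergence theorem in (i), since $\partial(\Sigma\cap\Omega)$ is only Lipschitz (with a possibly irregular junction $\partial\Gamma_0=\partial\Gamma_1$ and a vertex at $0$), and $u$ has only the regularity stated in Theorem~\ref{thm:MAIN GENERAL}. I would handle this exactly as in the proof of Lemma~\ref{lem:Pohozaev General}: first pass to the exhausting Lipschitz subdomains $E_\delta$ obtained by chopping off $\delta$-neighborhoods of $0$ and of $\partial\Gamma_0$, apply the divergence theorem on $E_\delta\cap\{u>t\}$ (where all fields are $L^\infty$ and $W^{1,p}$ by the regularity of $u$), exploit the Neumann condition on $\Gamma_{1,\delta}\setminus\{0\}$ to drop that piece, and then let $\delta\to 0$, using $u\in W^{1,\infty}$ to bound the negligible remainder on $\Gamma_\delta$ by $\mathcal H^{N-1}(\Gamma_\delta)\to 0$. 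The coarea formula guarantees that this procedure is valid for a.\,e.\ $t\in(0,M)$.
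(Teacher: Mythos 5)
Your proposal is correct and takes essentially the same route as the paper's proof: integrate the equation over $\{u>t\}$ (killing the $\Gamma_1$ contribution via the Neumann condition and the singular part via \eqref{eq:singular set on level sets has zero N-1-measure}) for (i), and combine H\"older's inequality on $P_{w,H}(\{u>t\};\Sigma)=\int_{\{u=t\}} H(\nabla u)|\nabla u|^{-1}w\,d\mathcal H^{N-1}$ with the isoperimetric inequality \eqref{eq:weighted anisotropic isoperimetric in cones} for (ii), with the equality case read off from the two inequalities separately. The only difference is expository: the paper invokes De Giorgi's structure theorem and writes ``up to a standard approximation argument'' rather than spelling out the $E_\delta$ exhaustion that you borrow from Lemma~\ref{lem:Pohozaev General}, but the underlying argument is the same.
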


\begin{proof}
	{\emph{(i)}} Since the function $u$ is of bounded variation in $\Si \cap \Om$, the
	coarea theorem for BV functions (see e.g.~\cite[Theorem~1, Section~5.5]{EG}) guarantees that the sets
	$\left\lbrace u > t \right\rbrace$ have finite perimeter for a.e. $t$. For the measure theoretic boundary $\pa^* \left\lbrace u > t \right\rbrace$, 
we see that
	\begin{equation}\label{eq:questaservexxxxxxx}
		\left\lbrace u=t \right\rbrace \cap \left\lbrace | \na u| > 0 \right\rbrace \subset
		\Si \cap  \pa^* \left( \left\lbrace u > t \right\rbrace  \right) \subset  \left\lbrace u = t \right\rbrace .
	\end{equation}
	
	By recalling \eqref{eq:singular set on level sets has zero N-1-measure},
	we conclude that
	\begin{equation*}
		\cH^{N-1} \left(  \left\lbrace u=t \right\rbrace \cap \left\lbrace | \na u| > 0 \right\rbrace \right) = \cH^{N-1} \left( \Si \cap \pa^* \left( \left\lbrace u > t \right\rbrace   \right) \right) \quad \text{for a.e. } t, 
	\end{equation*}
	and the exterior measure theoretical normal vector $\nu$ of $\pa^* \left\lbrace u > t \right\rbrace$ satisfies, a.e. in the sense of the measure $\cH^{N-1}$:
	\begin{equation}\label{eq:normale_theoretic}
		| \na u | \, \nu  = - \na u  \quad \quad \quad
		 \cH^{N-1}-\text{a.e. on } \Si \cap  \pa^* \left( \left\lbrace u > t \right\rbrace  \right).
	\end{equation}
	
	Accordingly, by using \eqref{eq:GENERALPB weighted anisotropic problem in cones} (we are using the differential equation together with the homogeneous boundary condition on $\Ga_1$) and De Giorgi's structure theorem (see \cite[Theorem 15.9]{Maggi} or \cite{Giusti}), we
	have, up to a standard approximation argument,
		\begin{equation*}
		I(t)= \int_{\left\lbrace u > t \right\rbrace} f(u) \, w(x) \, d \cH^N =
		\int_{\Si \cap \pa^* \left\lbrace u > t \right\rbrace} H(\na u)^{p-1} \langle \na_\xi H(\na u), \nu \rangle \, w(x) \, d\cH^{N-1} , \, \, \text{ for a.e. } t \in \left( 0 , M \right)
	\end{equation*}
	and hence, by using \eqref{eq:proprietacheserveinconti} and \eqref{eq:normale_theoretic},
	\begin{equation}
		I(t)=\int_{ \Si \cap \pa^* \left\lbrace u > t \right\rbrace} H(\na u)^{p-1} H \left( \nu \right) \, w(x) \, d\cH^{N-1} , \quad \text{for a.e. } t \in \left( 0 , M \right) .
	\end{equation}
	By \eqref{eq:singular set on level sets has zero N-1-measure} and \eqref{eq:questaservexxxxxxx}, we have that
	\begin{equation*}
		\int_{ \Si \cap \pa^* \left\lbrace u > t \right\rbrace} H(\na u)^{p-1} H \left( \nu \right) \, w(x) \, d\cH^{N-1} =
		\int_{  \left\lbrace u = t \right\rbrace} H(\na u)^{p-1} H \left( \nu \right) \, w(x) \, d\cH^{N-1} , \, \text{ for a.e. } t \in \left( 0 , M\right) ,
	\end{equation*}
	and the conclusion follows.
	\medskip
	
	{\emph{(ii)}} 
		By the isoperimetric inequality \eqref{eq:weighted anisotropic isoperimetric in cones}, we have
	\begin{equation}\label{eq:anisotropic-isoperimetric}
		P_{w, H} \left( \left\lbrace u > t  \right\rbrace  ; \Si \right) \ge c \, \, w \left( \left\lbrace u > t \right\rbrace \right)^{\frac{D-1}{D}} = c \, \mu(t)^{\frac{D-1}{D}} , \quad \text{for a.e. } t \in \left( 0 , M \right) ,
	\end{equation}
	where $c$ is the optimal (weighted anisotropic) isoperimetric constant defined in \eqref{eq:optimal constant anisotropic isoperimetric}. 
	
	Now, by the homogeneity of $H$ and \eqref{eq:normale_theoretic},
$$H(\nu) H(\na u)^{p-1} = H(\nu)^p | \na u |^{p-1}.$$
		By using this and~\eqref{eq:Gauss-Green}, we compute that
	\begin{equation}\label{eq:contoHolder}
		\begin{split}
			I(t)^{\frac{1}{p} } \left( \int_{ \left\lbrace u = t \right\rbrace } \frac{w(x)}{ | \na u | } \, d\cH^{N-1} \right)^{\frac{p-1}{p}} 
			& =
			\left( \int_{ \left\lbrace u = t \right\rbrace }  H(\nu)^p | \na u |^{p-1} \, w(x) \, d\cH^{N-1} \right)^{ \frac{1}{p} } \left( \int_{ \left\lbrace u = t \right\rbrace } \frac{w(x)}{ | \na u | } \, d\cH^{N-1} \right)^{\frac{p-1}{p}}
			\\
			& \ge \int_{ \left\lbrace u=t \right\rbrace } H(\nu) w(x) d \cH^{N-1} 
			\\
			& = P_{w,H} \left( \left\lbrace u > t \right\rbrace ; \Si \right)  
			\\
			& \ge c \, \mu(t)^{\frac{D-1}{D}}.
		\end{split}
	\end{equation}
	Here, the first inequality 
	is a consequence of H\"older's inequality, the second equality follows from \eqref{eq:defweighted perimetro smooth} (by recalling \eqref{eq:singular set on level sets has zero N-1-measure} and \eqref{eq:questaservexxxxxxx}), and the last inequality 
	comes from \eqref{eq:anisotropic-isoperimetric}.
	Thus, \eqref{eq:Holder-isoperimetric} is proved. The characterization of the equality sign in \eqref{eq:Holder-isoperimetric} follows by noting that the equality holds in \eqref{eq:Holder-isoperimetric} if and only if the equality sign holds in both the inequalities in \eqref{eq:contoHolder}. 
\end{proof}

We are now ready for the 

\begin{proof}[Proof of Theorem \ref{thm:MAIN GENERAL}]
We consider the function~$K$ defined in~\eqref{eq:def weighted K}, with~$\alpha$ and~$\beta$ as
in~\eqref{eq:def weighted albe}.
	Since~$K(t)$ is nonnegative and, by item (iv) of Lemma \ref{lem:weighted con4items}, nonincreasing, we have that
	\begin{equation}
		K(0^-) \ge K(0^+) - K(M^-) \ge \int_{0}^M - K'(t) dt .
	\end{equation}
	Combining this inequality with \eqref{eq:weighted derivative K} leads to
	\begin{equation*}
		K(0^-) \ge \int_{0}^M \left[  \al I(t)^{\al -1} \mu(t)^{\be} f(t) + \be I(t)^\al \mu(t)^{\be -1} \right] \left( - \mu'(t) \right) \, dt \quad \text{for a.e. } t.
	\end{equation*} 
	Notice that equality here above holds true when $K$ is absolutely continuous. 
	Being the integrand in the right-hand side (that is $-K'(t)$) and $- \mu'(t)$ nonnegative (and hence so is also the factor in the square bracket), we can use \eqref{eq:weighted inequalitymuprimo} to get
	\begin{equation*}
		K(0^-) \ge \int_{0}^M \left[  \al I(t)^{\al -1} \mu(t)^{\be} f(t) + \be I(t)^\al \mu(t)^{\be -1} \right] \left( \int_{ \left\lbrace u=t \right\rbrace } \frac{ w(x) }{| \na u|}  d \cH^{N-1} \right) \, dt .
	\end{equation*}
	%
	%
	Then we compute
	\begin{equation}\label{eq:contone}
		\begin{split}
			K(0^-) & \ge 
			\int_{0}^M \left[  \al I(t)^{\al -1} \mu(t)^{\be} f(t) + \be I(t)^\al \mu(t)^{\be -1} \right] \left( \int_{ \left\lbrace u=t \right\rbrace } \frac{ w(x) }{| \na u|}  d \cH^{N-1} \right) \, dt 
			\\
			& = \int_{0}^M \left[  \al I(t)^{\al -1 - \frac{1}{p-1} } \mu(t)^{\be} f(t) + \be I(t)^{\al - \frac{1}{p-1} } \mu(t)^{\be -1} \right] I(t)^{ \frac{1}{p-1} }  \left( \int_{ \left\lbrace u=t \right\rbrace } \frac{ w(x) }{| \na u|}  d \cH^{N-1} \right) \, dt
			\\
			& \ge \int_{0}^M c^{ \frac{p}{p-1} } \left[  \al I(t)^{\al -1 - \frac{1}{p-1} } \mu(t)^{\be} f(t) + \be I(t)^{\al - \frac{1}{p-1} } \mu(t)^{\be -1} \right] \mu(t)^{ \frac{p ( D -1)}{(p-1) D } } \, dt ,
		\end{split}
	\end{equation}
	where the last inequality follows from \eqref{eq:Holder-isoperimetric} and the fact that the factor in the square brackets is nonnegative, being $-K' \ge 0 $.
	
	Since \eqref{eq:contone} has been obtained by applying \eqref{eq:Holder-isoperimetric} on almost all the level sets, a necessary condition to get the equality in \eqref{eq:contone} is that, for a.e. $t \in (0, M)$, $\left\lbrace u > t \right\rbrace$ is a minimizer of \eqref{eq:weighted anisotropic isoperimetric in cones} and $H( \na u )$ is constant on $\left\lbrace u = t \right\rbrace$.
	
	Now we notice that the values of $\al$ and $\be$ in \eqref{eq:def weighted albe} are set in order to satisfy
	$$
	\al - 1 -\frac{1}{p-1} = 0 \quad \text{and} \quad \be -1 + \frac{p( D -1)}{(p-1) D } =0 .
	$$
	Thus, by setting 
	$$ 
	F(s):= \int_0^s f(\tau) \, d\tau 
	$$
	and~$p' = p/(p-1)$, \eqref{eq:contone} leads to
	\begin{equation}\label{eq:contone2}
		\begin{split}
			K(0^-) & \ge  \int_{0}^M c^{ p' } \left[  p'  f(t) \, w \left( \left\lbrace u>t \right\rbrace \right) + \frac{p- D }{ D (p-1)} \int_{ \left\lbrace u>t \right\rbrace } f(u) \, w(x) \, d \cH^{N} \right]  \, dt  
			\\
			& = c^{ p' } \int_{0}^M \int_{\Si \cap \Om} \chi_{ \left\lbrace u>t \right\rbrace } w(x) \left(  p'  f(t) + \frac{p- D }{ D (p-1)} f(u)  \right) \, d \cH^{N} \, dt 
			\\
			& = c^{ p' } \left[ p' \int_{\Si \cap \Om } F(u) \, w(x) \, d \cH^{N} +  \frac{p-D}{D(p-1)} \int_{ \Si \cap \Om } u \, f(u) \, w(x) \, d \cH^{N} \right] 
			\\
			& = c^{ p' } \frac{p'}{D} \left[ D \int_{\Si \cap \Om } F(u) \, w(x) \, d \cH^{N} +  \frac{p- D }{ p } \int_{\Si \cap \Om } u \, f(u) \, w(x) \, d \cH^{N} \right] .
		\end{split}
	\end{equation}
	By noting that 
	\begin{equation*}
		K(0^-) = \lim_{t \to 0^-} K(t) =  w \left( \Si \cap \Om \right)^{ \frac{p- D }{D (p-1)} }  \left[ \int_{\Si \cap \Om } f(u) \, w(x) \, d\cH^N \right]^{p'} ,
	\end{equation*}
	we deduce from~\eqref{eq:contone2} that
	\begin{equation}\label{eq:disuguaglianza da sinistra}
		\frac{D}{p' c^{ p' }}  w \left( \Si \cap \Om \right)^{ \frac{ p - D }{ D (p-1)} } \left[ \int_{\Si \cap \Om } f(u) \, w(x) \, d\cH^N \right]^{p'}
		\ge 
		D \int_{ \Si \cap \Om } F(u) \, w(x) \, d \cH^{N} +  \frac{p-D}{p} \int_{ \Si \cap \Om } u \, f(u) \, w(x) \, d \cH^{N} . 
	\end{equation}
	
	Now we recall the weighted anisotropic Pohozaev identity \eqref{eq:weighted anisotropic Pohozaev}:
	\begin{equation*}
		D \int_{\Si \cap \Om } F(u) \, w(x) d \cH^{N} +  \frac{p-D}{p} \int_{\Si \cap \Om } u \, f(u) \, w(x) \, d \cH^{N} = \frac{1}{p'} \int_{\Ga_0} H^p (\na u) \, \langle x, \nu \rangle \, w(x) \, d \cH^{N-1} ,
	\end{equation*}
	where $\Ga_0= \Si \cap \pa \Om$.
	By using for the first time that $\Om$ is a Wulff ball, \eqref{eq:norma H def dual} gives that 
	\begin{equation*} 
		\langle x, \nu \rangle = R \, H(\nu)  \quad \text{ for } x \in \pa \Om \supseteq \Ga_0 ,
	\end{equation*}
	where $R$ is the radius of the Wulff ball $\Om$, and hence
	\begin{equation}\label{eq:usiamo omega ball}
		\frac{1}{p'} \int_{ \Ga_0 } H^p (\na u) \, \langle x, \nu \rangle \, w(x) \, d \cH^{N-1} = \frac{ R }{ p' } \, \int_{ \Ga_0 }  H^p (\na u) \, H(\nu) \, w(x) \, d \cH^{N-1} .
	\end{equation}
	
	By using H\"older's inequality and \eqref{eq:defweighted perimetro smooth} we obtain
		\begin{equation*}
		\begin{split}
			\int_{ \Ga_0}  H^{p-1}(\na u) \, H(\nu) \, w(x) d \cH^{N-1} 
			& = \int_{ \Ga_0} \left[ H(\nu) \, w(x) \right]^{\frac{1}{p} } \, \left[  H^{p-1}(\na u)    \, \left( H(\nu) \,  w(x)\right)^{\frac{1}{p'} } \right]  d \cH^{N-1} 
			\\
			& \le P_{w, H} ( \Om; \Si)^{\frac{1}{p} } \,  \left[ \int_{ \Ga_0 }   H^{p}(\na u) 
			\, H(\nu)  \, w(x) \, d \cH^{N-1} \right]^{\frac{1}{p'} }  .
		\end{split}
	\end{equation*}
As a result, exploiting \eqref{eq:Gauss-Green} (which also
holds true for $t=0$), we conclude that
	\begin{equation*}
		\int_{\Ga_0}   H^{p}(\na u) \, H(\nu)  \, w(x) \, d \cH^{N-1} \ge \frac{1}{P_{w, H} ( \Om ; \Si  )^{\frac{1}{p-1} }} \, \left[     \int_{ \Si \cap \Om } f(u) \, w(x) \, d \cH^{N} \right]^{ p' } .
	\end{equation*}
	The last inequality, \eqref{eq:usiamo omega ball}
	and \eqref{eq:weighted anisotropic Pohozaev}
	entail that
	\begin{equation}\label{eq:disuguaglianza da destra}
		D \int_{ \Si \cap \Om } F(u) \, w(x) \, d \cH^{N} +  \frac{p-D}{p} \int_{ \Si \cap \Om } u \, f(u) \, w(x) \, d \cH^{N} \ge  \frac{R}{p' \,P_{w,H}(\Om; \Si)^{\frac{1}{p-1} }} \, \left[     \int_{ \Si \cap \Om } f(u) \, w(x) d \cH^{N} \right]^{ p' } .
	\end{equation}
	By recalling \eqref{eq:optimal constant anisotropic isoperimetric} and	
	 \eqref{eq:relazione volume perimetro}, the straightforward computation
	$$
	\frac{D}{p' c^{ p' }}  w \left( \Si \cap \Om \right)^{ \frac{p-D}{D(p-1)} } =  \frac{R}{p' \,P_{w,H}(\Om ; \Si )^{\frac{1}{p-1} }}
	$$
	shows that \eqref{eq:disuguaglianza da sinistra} and \eqref{eq:disuguaglianza da destra} are opposite inequalities. Thus, they hold with the equality sign, and hence, for a.e. $t \in \left( 0,M \right)$,  $\left\lbrace u>t \right\rbrace$ is a is a minimizer of \eqref{eq:weighted anisotropic isoperimetric in cones} and $H( \na u )$ is constant on $ \left\lbrace u=t \right\rbrace$.
\end{proof}

\begin{lem}\label{lem:conclusion in ball}
Suppose that the assumptions of Theorem \ref{thm:MAIN GENERAL} are satisfied. Up to rotations, we
write~$\Si= \RR^k \times \tilde{\Si}$, where $0 \le k \le N$ and $\tilde{\Si} \subset \RR^{N-k}$ is an open convex cone containing no lines.
	Assume that for a.e. $t \in (0,M)$ the following two conditions are verified:
	\begin{itemize}
\item[(i)] $\left\lbrace u >t \right\rbrace = \Si \cap B_{ \rho(t)}(x(t) ) $ where $B_{ \rho(t)}(x(t))$ is the Wulff ball of radius $\rho(t) \ge 0$ centered at the point $x(t) \in \RR^k \times \left\lbrace 0_{\RR^{N-k}} \right\rbrace $, with $\rho(t)$ and $x(t)$ depending on $t$; 
	\item[(ii)] $H( \na u)$ is constant on $\left\lbrace u = t \right\rbrace$.
	\end{itemize}
Then, $u$ is a radially symmetric and radially nonincreasing function. Moreover,
		\begin{equation*}
			u \text{ is radially strictly decreasing on } \left\lbrace 0 < u < M \right\rbrace ,
		\end{equation*}
		and
		$$
		\left\lbrace 0 < u < M \right\rbrace \quad \text{is a Wulff annulus or a punctured Wulff ball centered at } 0 \text{ intersected with } \Si.
		$$
\end{lem}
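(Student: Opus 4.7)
The aim is to deduce, from hypotheses (i) and (ii), that $u$ is Finsler-radial about the origin. I would proceed in three stages: first, upgrade the almost-everywhere level-set information to all $t \in (0, M)$ by continuity; second, prove that the center $x(t)$ of the superlevel Wulff ball is independent of $t$; third, identify this constant center with $0$.

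Since $u \in C^{0,1}(\ol{\Si \cap \Om})$, the superlevel sets $\{u > t\}$ vary continuously in symmetric difference. By compactness of $\rho(t) \in [0, R]$ (where $R$ is the radius of $\Om$) and of $x(t) \in \ol{\Om} \cap (\RR^k \times \{0\})$, I can pass from the full-measure set of $t$ where (i)--(ii) hold to arbitrary $t \in (0, M)$, obtaining continuous functions $t \mapsto \rho(t)$ and $t \mapsto x(t)$ such that (i) and (ii) hold for every $t \in (0,M)$. Strict monotonicity of $\rho(\cdot)$ is inherited from item~(iii) of Lemma~\ref{lem:weighted con4items}.

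The decisive step is showing that $x(t)$ is constant. For $y$ in the interior of $\Si \cap \Om$ with $\na u(y) \neq 0$, set $t = u(y)$. The level set through $y$ is a smooth piece of the Wulff sphere $\pa B_{\rho(t)}(x(t))$, whose Euclidean outward normal at $y$ is $\na H_0(y - x(t))$; hence
\[
\na u(y) = \lambda(y)\, \na H_0(y - x(t)), \qquad \lambda(y) < 0,
\]
and $H(\na H_0) \equiv 1$ together with (ii) yields $|\lambda(y)| = H(\na u(y)) = c(t)$, a scalar depending only on $t$. Differentiating the identity $H_0(y - x(u(y))) \equiv \rho(u(y))$ along an integral curve of $\na u / |\na u|^2$ — after first upgrading the regularity of $x(\cdot)$ and $\rho(\cdot)$ to absolute continuity, by an argument analogous to the one used for $\mu(t)$ in the proof of item (iii) of Lemma~\ref{lem:weighted con4items} — produces
\[
\rho'(t) + \langle \na H_0(y - x(t)),\, x'(t) \rangle = -\frac{1}{c(t)}
\]
for every $y \in \{u = t\}$. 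Since both the right-hand side and $\rho'(t)$ depend only on $t$, the map $y \mapsto \langle \na H_0(y - x(t)), x'(t)\rangle$ is constant on the relatively open subset $\pa B_{\rho(t)}(x(t)) \cap \Si$ of the Wulff sphere. Via the $H$--$H_0$ duality diffeomorphism, $\na H_0(y - x(t))$ then sweeps out a nonempty relatively open subset of the Finsler unit sphere $\{H = 1\}$. Because the standing assumption of uniform convexity of $B^H$ forbids any nonempty open portion of $\{H = 1\}$ from lying in an affine hyperplane, the linear functional $\langle \cdot,\, x'(t)\rangle$ can be constant on it only if $x'(t) = 0$. Consequently $x(t) \equiv x_0$ for a single $x_0 \in \RR^k \times \{0\}$.

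Letting $t \to 0^+$ and invoking the strong maximum principle (Lemma~\ref{lem:maximum principle in cones}) to ensure $u>0$ in $\Si \cap \Om$, I obtain that $\{u>t\}$ exhausts $\Si \cap \Om = \Si \cap B_R(0)$, so $\Si \cap B_{\rho(0^+)}(x_0) = \Si \cap B_R(0)$; a translation-volume comparison then forces $x_0 = 0$ and $\rho(0^+) = R$. With $x(t) \equiv 0$ and $\rho$ continuous and strictly decreasing, the equivalence $u(y) > t \iff H_0(y) < \rho(t)$ inverts to $u = \rho^{-1} \circ H_0$ on $\Si \cap \Om$, giving Finsler radiality, nonincreasing, and strictly decreasing on $\{0 < u < M\}$; the structure of the latter set as a punctured Wulff ball or a Wulff annulus centered at $0$ intersected with $\Si$ corresponds to whether $\lim_{t \to M^-} \rho(t)$ equals $0$ or is positive. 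The principal obstacle is the center-constancy step: it requires both justifying a.e.\ differentiability of $x(\cdot)$ and then carefully exploiting the uniform convexity of $B^H$ in the geometric way described above to kill any radial variation.
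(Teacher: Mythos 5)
Your proposal is correct in its essential logic and, at the level of overall strategy, coincides with the paper's: both are contradiction arguments that upgrade (i)--(ii) to all $t$, establish local Lipschitz regularity of $\rho(\cdot)$ and $x(\cdot)$, and then show $x'(t)=0$ by differentiating the level-set identity and invoking (ii). Where you diverge is in the mechanism used to force $x'(t_0)=0$. The paper evaluates the differentiated identity at just \emph{two} antipodal points $P(t_0)=x(t_0)+\rho(t_0)\bar\xi$ and $Q(t_0)=x(t_0)-\rho(t_0)\bar\xi$ with $\bar\xi=x'(t_0)/H_0(x'(t_0))$, uses the elementary duality fact $\langle\nu_B(\bar\xi),\bar\xi\rangle=H(\nu_B(\bar\xi))$ and the symmetry $\nu_B(-\bar\xi)=-\nu_B(\bar\xi)$, and then exploits (ii) (equality of $H(\nabla u)$ at $P$ and $Q$) to get a direct contradiction with $H_0(x'(t_0))\neq 0$. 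You instead run the differentiation over the \emph{entire} relatively open portion of the Wulff sphere inside $\Si$, deduce that $\langle\nabla H_0(y-x(t)),x'(t)\rangle$ is constant there, and kill $x'(t)$ by the uniform convexity of $B^H$ (no open piece of $\{H=1\}$ is affine). Both are valid; the paper's version is lighter because it needs only the pointwise duality identity, not the full strict-convexity argument, and it also makes the $k<N$ case cleaner since $P,Q$ lie on $\Gamma_1$ where $u$ is still $C^1$ (whereas you implicitly need a \emph{nonempty} open piece of $\partial B_{\rho(t)}(x(t))\cap\Si$, which does hold but must be noted). Two small points: (1) your reduction of the regularity of $x(\cdot)$ to ``an argument analogous to the one for $\mu(t)$'' is not quite the right analogy; the paper's argument is the cleaner nesting estimate $H_0(x(t)-x(s))\le\rho(t)-\rho(s)$ for $t<s$, which gives local Lipschitz immediately once $\rho$ is Lipschitz. (2) Your final step invokes ``the strong maximum principle (Lemma~\ref{lem:maximum principle in cones})'' to conclude $u>0$ throughout $\Si\cap\Om$; but that lemma is a \emph{weak} maximum principle and only yields $u\ge 0$. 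What the paper actually extracts from it (together with the Gauss--Green identity and (ii)) is that $\nabla u$ does not vanish on $\{0<u<M\}$, which is the fact needed for the strict radial monotonicity; the constancy of the center already follows from the $x'\equiv 0$ step, and the identification of the constant center with the origin follows from the structural constraint that the superlevel sets are $\Si\cap B_{\rho(t)}(x(t))$ nested inside $\Si\cap B_R(0)$.
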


\begin{proof}
	Let us consider the case $k=N$. We follow the ideas used in \cite[Lemma 6]{Se} in the isotropic setting.
		By reasoning as in \cite[Lemma 6]{Se}, we easily find that
	$$\left\lbrace u>t \right\rbrace = B_{ \rho(t)}(x(t)) \quad \text{ for every } t \in (0,M), $$
	where $B_{ \rho(t)}(x(t))$ denotes the Wulff ball centered at $x(t)$ with radius $\rho(t)$.
	Hence, by continuity also (ii) holds for every $t \in (0,M)$, that is, $H( \na u)$ is constant on $\pa B_{ \rho(t)}(x(t))$ for every $t \in (0,M)$.
	By \eqref{eq:Gauss-Green}, we thus find that
	$$
	P_{w,H} (\pa B_{ \rho(t)}(x(t))) H \left( \na u(\pa B_{ \rho(t)}(x(t)))  \right)^{p-1} = \int_{ B_{ \rho(t)}(x(t)) } f(u) \, w(x) \, d \cH^{N}.
	$$
	Notice that, since $f \ge 0$, by the maximum principle we cannot have $f \equiv 0$ on $\left\lbrace u > t \right\rbrace$ for $t \in (0,M)$. Hence, $\na u$ does not vanish in $\left\lbrace 0 < u < M \right\rbrace$.
	
	Thus, $u$ is a $C^1$ function whose gradient never vanishes in $\left\lbrace u < M \right\rbrace$, and hence $\widetilde{\mu}(t):= \cH^{N} ( \left\lbrace u >t \right\rbrace) $ is locally Lipschitz in $(0,M)$.
	Therefore, also $\rho(t)= \left( \widetilde{\mu}(t)/\cH^{N}(B) \right)^{1/N}$ is locally Lipschitz, being $\cH^{N}(B)$ the volume of the unitary Wulff ball.
	
We observe that $B_{ \rho(t)}(x(t)) = \left\lbrace u \ge t \right\rbrace \supset \left\lbrace u \ge s \right\rbrace = B_{ \rho(s)}(x(s)) $ for $t<s$, and therefore
$$x(s) + \rho(s) \frac{x(s) -x(t)}{H_0(x(s) -x(t))} \in B_{ \rho(s)}(x(s))\subset B_{ \rho(t)}(x(t)) .$$
As a consequence,
	\begin{equation*}
		\begin{split}
			\rho(t) & \ge H_0 \left( x(s) + \rho(s) \frac{x(s) -x(t)}{H_0(x(s) -x(t))} -   x(t) \right)
			\\
			& = H_0 \left( \frac{ x(s)-x(t)}{H_0(x(s) -x(t))} \left(H_0(x(s) -x(t)) + \rho(s) \right) \right)
			\\
			& = H_0(x(s) -x(t)) +\rho(s) ,
		\end{split}
	\end{equation*}
	where in the last identity we used the homogeneity of $H_0$, and accordingly
	\begin{equation}\label{eq:x locally Lipschitz}
		H_0( x(t) - x(s)) \le \rho(t) - \rho(s) \quad \text{ for } t<s ,
	\end{equation}
	that gives that $x(t)$ is also locally Lipschitz.
	
Now suppose by contradiction that $u$ is not radially symmetric. Then $x(t)$ would not be identically constant in $(0,M)$ and hence we could find some $t_0 \in (0,M) $ such that the velocity vector $x' (t_0 )$ would exist and be nonzero. Set
	$$\ol{\xi}:= \frac{x'( t_0 )}{ H_0 \left( x'( t_0 ) \right) } ,\qquad
	P(t) := x(t) + \rho(t) \ol{\xi} 
	\quad {\mbox{and}} \quad
	Q(t) := x(t) - \rho(t) \ol{\xi} .
	$$
	By definition, $P(t)$ and $Q(t)$ belong to $\pa \left\lbrace u>t \right\rbrace $ for every $t$, and hence we have
	\begin{equation}\label{eq:uP=uQ conto in lemma conclusivo}
		u \left( P(t) \right) \equiv u \left( Q(t) \right) \equiv t \quad \text{ for every } t ,
	\end{equation}
	\begin{equation}\label{eq:conclusive lemma gradienti P Q su livello}
		\na u \left( P(t_0) \right) = - | \na u \left( P(t_0) \right) | \, \nu \left( P(t_0) \right) 
		\quad \text{ and } \quad
		\na u \left( Q(t_0) \right) = - | \na u \left( Q(t_0) \right) | \, \nu \left( Q(t_0) \right) ,
	\end{equation}
	where $\nu$ denotes the exterior unit normal vector field to
	$\left\lbrace u >t_0 \right\rbrace = B_{ \rho(t_0)}(x(t_0)) $.
	Moreover, since $\ol{ \xi} \in B= \left\lbrace x \in \RR^N \, : \, H_0 ( x )=1 \right\rbrace$, by \eqref{eq:norma H def dual} it holds that
	\begin{equation}\label{eq:formula Wulff ball xi prodotto scalare con normale}
	\langle \nu_{B} (\ol{\xi} ), \ol{ \xi } \rangle = H( \nu_B ( \ol{\xi} ) ) .
	\end{equation}
	We also observe that
	\begin{equation}\label{eq:relazioni normali e ol xi}
	\nu \left( P(t_0) \right) = \nu_{B} (\ol{\xi} ) = - \nu_{B} ( - \ol{\xi} ) = - \nu \left( Q(t_0) \right) .
	\end{equation}
	To check this, we let $B_r= \left\lbrace H_0(x) < r \right\rbrace$ be the Wulff ball of radius $r$ centered at the origin, and set $B:=B_1$.
				For any $x \in \pa B_r$ it holds that
				$$
				\nu_{B_r} (x) = \frac{\na_x H_0(x)}{ | \na_x H_0(x) |} 
				\quad \text{and} \quad
				\nu_{B_r} (- x) = \frac{\na_x H_0(-x)}{ | \na_x H_0(-x) |}.
				$$
				Also, by the homogeneity of $H_0$ we have that 
				$$
				\na_x H_0 (t x ) = \mathrm{sgn}(t) \na_x H_0 (x) \quad \text{for all } \, t \neq 0, \, x \neq 0 
				$$
				and consequently
				\begin{equation}\label{eq:aggiunta a conclusivelemma conto normali}
				\nu_{B_r} (r \, \xi ) = \nu_{B} ( \xi )= - \nu_{B} (- \xi ) = \nu_{B_r} ( - r \, \xi )  \quad \text{ for any } \xi \in \pa B , \, r \in \RR^+ .
				\end{equation}
Additionally, taking into account the translation invariance of the normals, we obtain
				$$
				\nu_{B_r (x)} \left( x + r \, \xi \right) = \nu_{B_r} \left( r \, \xi \right) \quad \text{ for any } x \in \RR^N, \, r \in \RR^+, \, \xi \in \pa B .
				$$			
				Combining this information and~\eqref{eq:aggiunta a conclusivelemma conto normali} we obtain~\eqref{eq:relazioni normali e ol xi}, as desired.
				
				Thus, from~\eqref{eq:formula Wulff ball xi prodotto scalare con normale} and \eqref{eq:relazioni normali e ol xi}
		we deduce that
		$$
		\langle  \nu \left( P(t_0) \right) , \ol{\xi} \rangle =  H \left(\nu \left( P(t_0) \right)  \right)  
		\quad \text{ and } \quad
		\langle \nu \left( Q(t_0) \right) , \ol{\xi} \rangle = -  H \left( \nu \left( Q(t_0) \right) \right) .
		$$ 
	From this and \eqref{eq:conclusive lemma gradienti P Q su livello}, it follows that
	\begin{equation}\label{eq:olxi conto per lemma conclusivo}
		\langle \na u \left( P(t_0) \right) , \ol{ \xi } \rangle =   - H \left( \na u \left( P(t_0) \right)  \right) \quad \text{ and } \quad 
		\langle \na u \left( Q(t_0) \right) , \ol{ \xi } \rangle =   H \left( \na u \left( Q(t_0) \right)  \right) .
	\end{equation}
	
	By using \eqref{eq:uP=uQ conto in lemma conclusivo} and \eqref{eq:olxi conto per lemma conclusivo}, we compute
	$$
	1= \frac{d}{dt} u \left( P(t) \right) |_{t=t_0} = \langle \na u  \left( P(t_0) \right) , \left( H_0 ( x'(t_0) ) + \rho'(t_0) \right) \ol{ \xi } \rangle = - H \left( \na u \left( P(t_0) \right)  \right) \left( H_0 ( x'(t_0) ) + \rho'(t_0) \right)
	$$ and
	$$
	1= \frac{d}{dt} u \left( Q(t) \right) |_{t=t_0} = \langle \na u  \left( Q(t_0) \right) , \left( H_0 ( x'(t_0) ) - \rho'(t_0) \right) \ol{ \xi } \rangle =  H \left( \na u \left( Q(t_0) \right)  \right) \left( H_0 ( x'(t_0) ) - \rho'(t_0) \right) .
	$$
	
	But by assumption (ii), we have that $ H \left( \na u \left( P(t_0) \right) \right) =  H \left( \na u \left( Q(t_0) \right) \right) $. This leads to $H_0 ( x'(t_0) )=0$, which is a contradiction. 
	
	Thus, $u$ is radially symmetric, i.e., $u = u(r)$, with $r=H_0(x)$. Since we showed that $\na u$ does not vanish on $\left\lbrace 0<u<M \right\rbrace$, we thus have $\pa_r u <0$ in this open ring. The set $\left\lbrace u=M \right\rbrace$ could be a point, but also a closed ball (as it happens, e.g., in the example described in \cite[pag. 1895]{Se}).
This proves the result when $k=N$.
	
	By direct inspection and recalling that $u$ is $C^1$ up to $\Ga_1 \setminus \left\lbrace 0 \right\rbrace $, it is easy to check that the same proof remains valid when $1 \le k \le N-1$. Notice that in this case we have that $\ol{\xi} \in \RR^{k} \times \left\lbrace 0_{\RR^{N-k}} \right\rbrace $. 
	
	The case $k=0$ is trivial.
\end{proof}

We have now all the ingredients to complete the 

\begin{proof}[Proof of Theorems \ref{thm:cones}, \ref{thm:MAIN unweighted in Wulff ball} and \ref{thm:weighted cones}]
Theorems \ref{thm:cones} and \ref{thm:MAIN unweighted in Wulff ball} easily follow by putting together Theorems \ref{thm:MAIN GENERAL}, \ref{thm:characterization unweighted anisotropic}, and Lemma \ref{lem:conclusion in ball}.

Theorem \ref{thm:weighted cones} easily follows by putting together Theorems \ref{thm:MAIN GENERAL}, \ref{thm:characterization isotropic weighted ISOperimetric}, and Lemma \ref{lem:conclusion in ball}.
\end{proof}

\section*{Acknowledgements}
The authors are members of INdAM/GNAMPA and AustMS and
are supported by the Australian Research Council
Discovery Project DP170104880 NEW ``Nonlocal Equations at Work''.
The first author is supported by
the Australian Research Council DECRA DE180100957
``PDEs, free boundaries and applications''. 
The second and third authors are supported by
the Australian Laureate Fellowship
FL190100081
``Minimal surfaces, free boundaries and partial differential equations''.

The authors thank Xavier Cabr\'e for bringing up to their attention the reference~\cite{Se}. 
The authors also thank Xavier Ros-Oton and Joaquim Serra for their useful
comments on a preliminary version of this paper, Giulio Ciraolo for the
helpful correspondence about the forthcoming article~\cite{ACF}, and Emanuel Indrei for kindly pointing out his recent results in~\cite{I}.

\vfill

\end{document}